\theoremstyle{plain}
\newtheorem{thm}{Theorem}[section]
\newtheorem{cor}[thm]{Corollary}
\newtheorem{pro}[thm]{Proposition}
\newtheorem{lem}[thm]{Lemma}
\newtheorem{proposition-principale}[thm]{Proposition principale}
\newtheorem{theoalph}{Theorem}
\theoremstyle{definition}
\newtheorem{defi}[thm]{Definition}
\newtheorem{rem}[thm]{Remark}
\newtheorem{rems}[thm]{Remarks}
\newtheorem{probs}[thm]{Problems}
\newtheorem{prob}[thm]{Problem}
\newtheorem{question}[thm]{Question}
\numberwithin{equation}{section}       
\newcommand*{\newfaktor}[2]{
  \raisebox{0.5\height}{\ensuremath{#1}}
  \mkern-5mu\diagup\mkern-4mu
  \raisebox{-0.5\height}{\ensuremath{#2}}
}
\begin{document}

\setlength{\baselineskip}{0.54cm}         
\title[Algebraic properties of the group of germs of 
 diffeomorphisms]{Algebraic properties of the group \\ of germs of 
 diffeomorphisms}
\date{}

\author{Dominique Cerveau}
\address{Univ. Rennes, CNRS, IRMAR-UMR 6625, F-35000 Rennes, France}
\email{dominique.cerveau@univ-rennes1.fr}

\author{Julie D\'eserti}
\address{Universit\'e d'Orl\'eans, Institut Denis Poisson, route de Chartres, $45067$ Orl\'eans Cedex $2$, France}
\email{deserti@math.cnrs.fr}

\subjclass[2020]{57S05, 20E99}

\begin{abstract} 
We establish some algebraic properties
of the group $\mathrm{Diff}(\mathbb{C}^n,0)$ of 
germs of analytic diffeomorphisms of 
$\mathbb{C}^n$, and its formal completion
$\widehat{\mathrm{Diff}}(\mathbb{C}^n,0)$.
For instance we describe the commutator 
of~$\mathrm{Diff}(\mathbb{C}^n,0)$, but also prove that
any finitely generated subgroup of $\mathrm{Diff}(\mathbb{C}^n,0)$ 
is residually finite; we thus obtain some constraints
of groups that embed into $\mathrm{Diff}(\mathbb{C}^n,0)$. 
We show that $\widehat{\mathrm{Diff}}(\mathbb{C}^n,0)$
is an Hopfian group, and that 
$\widehat{\mathrm{Diff}}(\mathbb{C}^n,0)$ and
$\mathrm{Diff}(\mathbb{C}^n,0)$ are
not co-Hopfian. We end by the description of 
the automorphism groups of $\widehat{\mathrm{Diff}}(\mathbb{C},0)$,
and $\mathrm{Diff}(\mathbb{C},0)$.
\end{abstract}

\maketitle


\section*{Introduction}

Let $M^n$ be a complex manifold of dimension $n$, and
let $\mathcal{F}$ be a codimension $p$ holomorphic
foliation~$M^n$ with singular locus $\mathrm{Sing}(\mathcal{F})$. 
Suppose that $N\subset M$ is a submanifold of 
dimension $n-p$ invariant by $\mathcal{F}$. Then 
there exists a natural morphism 
\[
\mathrm{Hol}\colon\pi_1(N\smallsetminus\mathrm{Sing}(\mathcal{F}),\ast)\to\mathrm{Diff}(\mathbb{C}^p,0)
\]
the so-called holonomy representation. 
Suppose that $\ker\mathrm{Hol}$ is non trivial, and 
let $\gamma$ be an element of 
$\pi_1(N\smallsetminus\mathrm{Sing}(\mathcal{F}),\ast)\smallsetminus\{\mathrm{id}\}$ such that $\mathrm{Hol}(\gamma)=\mathrm{id}$.
Then, $\gamma$ can be lifted in the leaves near $N$ to 
some loops which are homotopically non trivial in these
leaves. In that situation the holonomy representation
gives topological and dynamical informations on the 
foliation. Another interesting fact is the following. 
Suppose that $p=1$, and that the image of $\mathrm{Hol}$
is an abelian linearisable group; then $\mathcal{F}$
is defined in a neighborhood of
$N\smallsetminus\mathrm{Sing}(\mathcal{F})$
by a closed meromorphic $1$-form 
(\cite{CerveauMattei, Pereira}).

As a consequence,
the study of such representations is an important 
problem, and requires the knowledge of algebraic 
properties of the groups $\mathrm{Diff}(\mathbb{C}^p,0)$.
In \cite{Dieudonne} Dieudonn\'e describes the geometric 
and algebraic properties of the classical linear
groups $\mathrm{GL}(n,\Bbbk)$. This text is part of a similar 
perspective by highlighting some properties that we considered important.

\bigskip

\textbf{Notations}. If $n$ is an integer, then 
$\mathrm{G}_0^n$ denotes $\mathrm{Diff}(\mathbb{C}^n,0)$, and 
$\widehat{\mathrm{G}}_0^n$ denotes its formal completion. Furthermore,~$\mathrm{G}_k^n$ is the set of elements of 
$\mathrm{G}_0^n$ tangent to the identity at order $k$,
and $\widehat{\mathrm{G}}_k^n$ is its formal completion. 

\bigskip

\textbf{Structure of the paper}. In 
\S\ref{sec:PoincareSiegel} we establish some consequences of 
Poincar\'e linearisation theorem:

\begin{theoalph}
{\sl For any $k\geq 1$ we have
\begin{align*}
& [\mathrm{G}_0^n,\mathrm{G}_k^n]=\mathrm{G}_k^n, && \text{and} && [\widehat{\mathrm{G}}_0^n,\widehat{\mathrm{G}}_1^n]=\widehat{\mathrm{G}}_1^n. 
\end{align*}

As a consequence, we obtain:
\[
[\mathrm{G}_0^n,\mathrm{G}_0^n]=\big\{f\in\mathrm{G}_0^n\,\vert\,\det Df_{(0)}=1\big\}.
\]}
\end{theoalph}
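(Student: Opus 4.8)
The plan is to reduce everything to Poincar\'e's linearisation theorem by exhibiting a single, cleverly chosen commutator. The inclusions $[\mathrm{G}_0^n,\mathrm{G}_k^n]\subseteq\mathrm{G}_k^n$ and $[\widehat{\mathrm{G}}_0^n,\widehat{\mathrm{G}}_1^n]\subseteq\widehat{\mathrm{G}}_1^n$ are the easy half: one checks that $\mathrm{G}_k^n$ is normal in $\mathrm{G}_0^n$ — if $\varphi\in\mathrm{G}_0^n$ and $h=\mathrm{id}+\eta$ with $\eta$ vanishing to order $k+1$ at the origin, then $\varphi h\varphi^{-1}=\mathrm{id}+(D\varphi\circ\varphi^{-1})\cdot(\eta\circ\varphi^{-1})+\cdots$ again vanishes to order $k+1$ — and likewise in the formal setting. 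So the whole point is the reverse inclusion, and I would in fact prove the sharper statement that every $f\in\mathrm{G}_k^n$ is a \emph{single} commutator $[g,h]:=ghg^{-1}h^{-1}$ with $g$ a homothety and $h\in\mathrm{G}_k^n$.

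Fix once and for all $\lambda\in\mathbb{C}^\ast$ with $0<|\lambda|<1$ and $\lambda$ not a root of unity, and let $g$ be the germ $x\mapsto\lambda x$. The device is the elementary identity $[g,h]=f\iff hgh^{-1}=f^{-1}g$. Now take $f\in\mathrm{G}_k^n\subseteq\mathrm{G}_1^n$, so that $Df_{(0)}=\mathrm{id}$ and hence $f^{-1}\circ g$ has linear part $\lambda\,\mathrm{id}$. Its spectrum $\{\lambda\}$ lies in the open unit disc, so $\lambda\,\mathrm{id}$ is in the Poincar\'e domain, and it is non-resonant because $\lambda^{m}\neq1$ for all $m\geq1$; Poincar\'e's linearisation theorem thus provides $\psi\in\mathrm{G}_0^n$ with $\psi\,(f^{-1}g)\,\psi^{-1}=\lambda\,\mathrm{id}=g$. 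I would then put $h:=\psi^{-1}\circ D\psi_{(0)}$: since the linear map $D\psi_{(0)}$ commutes with the homothety $g$, one gets $hgh^{-1}=\psi^{-1}g\psi=f^{-1}g$, while $Dh_{(0)}=\mathrm{id}$; by the identity above, $f=[g,h]$.

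It remains to see that $h$ actually lies in $\mathrm{G}_k^n$, and here I would argue via a functional equation. Write $h=\mathrm{id}+\eta$ (with $\eta$ of order $\geq2$) and $f^{-1}=\mathrm{id}+\zeta$ (with $\zeta$ of order $\geq k+1$, as $f^{-1}\in\mathrm{G}_k^n$). Expanding $hg=(f^{-1}g)h$ gives $\eta(\lambda x)-\lambda\,\eta(x)=\zeta\big(\lambda x+\lambda\,\eta(x)\big)$, whose right-hand side has order $\geq k+1$; comparing homogeneous components of degree $j$ for $2\leq j\leq k$ yields $(\lambda^{j}-\lambda)\eta_j=0$, hence $\eta_j=0$ because $\lambda^{j-1}\neq1$. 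Therefore $\eta$ has order $\geq k+1$, i.e. $h\in\mathrm{G}_k^n$, and $f\in[\mathrm{G}_0^n,\mathrm{G}_k^n]$. For the formal statement the same functional equation is solved directly by recursion on the degree: for $f\in\widehat{\mathrm{G}}_1^n$, each $\eta_j$ ($j\geq2$) is determined by $\eta_2,\dots,\eta_{j-1}$ and by $f$ upon dividing by $\lambda^{j}-\lambda=\lambda(\lambda^{j-1}-1)\neq0$, producing $h\in\widehat{\mathrm{G}}_1^n$ with $[g,h]=f$; no convergence question arises. The one genuine obstacle is precisely this convergence issue in the analytic case — the recursion runs in any case, but one must know its solution is an honest germ — and it is exactly here that the Poincar\'e-domain choice of $\lambda$ and the appeal to Poincar\'e's theorem are used (equivalently, all the divisors $\lambda^{j-1}-1$ stay bounded away from $0$, so convergence follows from Poincar\'e's majorant estimates); the analytic and formal arguments then run in parallel.

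Finally, for the stated consequence, note that $f\mapsto\det Df_{(0)}$ is a homomorphism from $\mathrm{G}_0^n$ onto the abelian group $\mathbb{C}^\ast$, so $[\mathrm{G}_0^n,\mathrm{G}_0^n]$ is contained in its kernel. For the reverse inclusion, given $f\in\mathrm{G}_0^n$ with $\det Df_{(0)}=1$, I would factor $f=A\circ f_0$ with $A:=Df_{(0)}\in\mathrm{SL}(n,\mathbb{C})$ and $f_0\in\mathrm{G}_1^n$. By the case $k=1$ already proved, $f_0\in[\mathrm{G}_0^n,\mathrm{G}_1^n]\subseteq[\mathrm{G}_0^n,\mathrm{G}_0^n]$; and since $\mathrm{GL}(n,\mathbb{C})\subset\mathrm{G}_0^n$ and $[\mathrm{GL}(n,\mathbb{C}),\mathrm{GL}(n,\mathbb{C})]=\mathrm{SL}(n,\mathbb{C})$ (trivially so when $n=1$; see \cite{Dieudonne}), also $A\in[\mathrm{G}_0^n,\mathrm{G}_0^n]$. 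Hence $f=A\circ f_0\in[\mathrm{G}_0^n,\mathrm{G}_0^n]$, which gives the claimed equality.
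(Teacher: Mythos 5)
Your proof is correct and follows essentially the same route as the paper's: both realize each $f\in\mathrm{G}_k^n$ as a commutator with a contracting, non-resonant homothety by applying Poincar\'e's linearisation theorem to $f^{-1}g$ (the paper to $hA$), and both deduce the description of $[\mathrm{G}_0^n,\mathrm{G}_0^n]$ from the factorisation $f=A\circ f_0$ with $A\in\mathrm{SL}(n,\mathbb{C})$ and $f_0\in\mathrm{G}_1^n$. The only substantive addition is that you verify, via the functional equation for $\eta$, that the linearising map can be taken in $\mathrm{G}_k^n$ --- a fact the paper records only as an unproved remark --- and that you spell out the formal recursion which the paper dispatches with ``a similar proof works in the formal case''.
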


Finding the finitely generated groups that embed into 
$\mathrm{G}_0^n$ is a problem related to the foliations 
theory; in \S\ref{sec:residuallyfinite} we deal with this 
question and we get:

\begin{theoalph}
{\sl Any finitely generated subgroup of $\mathrm{G}_0^n$ $($resp.
$\widehat{\mathrm{G}}_0^n)$ is residually finite.

Hence, if $\mathrm{H}$ is a finitely generated and 
non residually finite group, then $\mathrm{H}$ does 
not embed into $\mathrm{G}_0^n$.}
\end{theoalph}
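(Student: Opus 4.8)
The plan is to exploit the descending filtration of $\mathrm{G}_0^n$ by the normal subgroups $\mathrm{G}_k^n$ together with Malcev's theorem that every finitely generated linear group is residually finite. The starting observation is that for each $k\geq 1$ the truncation map $j_k\colon\mathrm{G}_0^n\to\mathrm{J}_k^n$, sending a germ to its $k$-jet at the origin, is a group homomorphism whose kernel is exactly $\mathrm{G}_k^n$; here $\mathrm{J}_k^n$ is the finite-dimensional group of $k$-jets of biholomorphisms of $(\mathbb{C}^n,0)$ under truncated composition. Moreover $\bigcap_{k\geq 1}\mathrm{G}_k^n=\{\mathrm{id}\}$, since a convergent (or formal) power series is determined by the family of its truncations. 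Everything said here holds verbatim for $\widehat{\mathrm{G}}_0^n$ with the $\widehat{\mathrm{G}}_k^n$; in fact the quotients $\mathrm{G}_0^n/\mathrm{G}_k^n$ and $\widehat{\mathrm{G}}_0^n/\widehat{\mathrm{G}}_k^n$ are canonically identified with the same group $\mathrm{J}_k^n$.

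Next I would check that $\mathrm{J}_k^n$ is a linear algebraic group over $\mathbb{C}$. Let $V_k=\mathbb{C}[x_1,\dots,x_n]/(x_1,\dots,x_n)^{k+1}$, a finite-dimensional $\mathbb{C}$-vector space, and let $f$ act on $V_k$ by $\varphi\mapsto\varphi\circ f^{-1}$. A direct check shows this is well defined on $k$-jets and defines a homomorphism $\rho\colon\mathrm{J}_k^n\to\mathrm{GL}(V_k)$; it is faithful because $\rho(f)=\mathrm{id}$ forces $x_i\circ f^{-1}\equiv x_i$ modulo order $k+1$ for all $i$, hence $f\equiv\mathrm{id}$ modulo order $k+1$. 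So $\mathrm{J}_k^n$ embeds in $\mathrm{GL}(N,\mathbb{C})$ for some $N=N(n,k)$. Now take a finitely generated subgroup $\Gamma\leq\mathrm{G}_0^n$ and an element $g\in\Gamma$, $g\neq\mathrm{id}$. By the first paragraph there is $k$ with $j_k(g)\neq\mathrm{id}$. The image $j_k(\Gamma)$ is a finitely generated subgroup of $\mathrm{GL}(N,\mathbb{C})$, hence residually finite by Malcev's theorem, so there is a homomorphism $\psi$ from $j_k(\Gamma)$ onto a finite group $F$ with $\psi(j_k(g))\neq 1$. The composite $\psi\circ(j_k|_\Gamma)\colon\Gamma\to F$ separates $g$ from the identity. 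As $g$ was arbitrary, $\Gamma$ is residually finite, and the same proof applies to $\widehat{\mathrm{G}}_0^n$. The final assertion follows at once: residual finiteness passes to subgroups, so a finitely generated non residually finite group cannot embed into $\mathrm{G}_0^n$ (or $\widehat{\mathrm{G}}_0^n$).

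All computations involved are routine; the only points that require a little care are the verification that $\mathrm{J}_k^n$ is linear — equivalently the construction of the faithful representation on $V_k$ — and the invocation of Malcev's theorem in the form \textquotedblleft every finitely generated subgroup of $\mathrm{GL}(N,\mathbb{C})$ is residually finite\textquotedblright. Conceptually the statement merely records that $\mathrm{G}_0^n$ and $\widehat{\mathrm{G}}_0^n$ are residually (finitely generated linear over $\mathbb{C}$), with the extra feature that the separating quotients $\mathrm{G}_0^n/\mathrm{G}_k^n$ are finite-dimensional, so I do not anticipate a genuine obstacle.
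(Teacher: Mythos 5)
Your proof is correct and follows essentially the same route as the paper: separate a nontrivial element by its $k$-jet, identify the quotient $\newfaktor{\mathrm{G}_0^n}{\mathrm{G}_k^n}$ with a subgroup of $\mathrm{GL}(\mathbb{C}[x_1,\ldots,x_n]_k)$ via the faithful truncated-composition action, and apply Malcev's theorem to the finitely generated image. The only difference is that you spell out the faithfulness of the jet representation, which the paper records separately in its preliminaries.
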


But any finitely generated residually finite group is 
a Hopfian group (\cite{Malcev1, Malcev2}); as a 
result, any finitely generated subgroup of $\mathrm{G}_0^n$
$($resp. $\widehat{\mathrm{G}}_0^n)$ is a Hopfian group.
In \S \ref{sec:hopcohop} we refine this result, and also 
look at the co-Hopfian property:

\begin{theoalph}
{\sl The group $\widehat{\mathrm{G}}_0^1$ is a Hopfian 
group.

The groups $\mathrm{G}_1^n$ and $\widehat{\mathrm{G}}_1^n$
are not co-Hopfian groups.}
\end{theoalph}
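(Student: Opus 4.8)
\emph{Hopfianity of $\widehat{\mathrm{G}}_0^1$.} The plan is to reduce this to the assertion that \emph{every nontrivial normal subgroup $N\trianglelefteq\widehat{\mathrm{G}}_0^1$ contains $\widehat{\mathrm{G}}_m^1$ for some $m\geq1$}. Granting it, Hopfianity follows at once: if $\varphi\colon\widehat{\mathrm{G}}_0^1\to\widehat{\mathrm{G}}_0^1$ is a surjective endomorphism and $N=\ker\varphi\neq\{\mathrm{id}\}$, then $\widehat{\mathrm{G}}_0^1\cong\widehat{\mathrm{G}}_0^1/N$, and since $N\supseteq\widehat{\mathrm{G}}_m^1$ this group is a quotient of $\widehat{\mathrm{G}}_0^1/\widehat{\mathrm{G}}_m^1$. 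But $\widehat{\mathrm{G}}_0^1/\widehat{\mathrm{G}}_m^1$ is solvable — it has the unipotent (hence nilpotent) group $\widehat{\mathrm{G}}_1^1/\widehat{\mathrm{G}}_m^1$ of truncated tangent-to-identity series as a normal subgroup, with abelian quotient $\widehat{\mathrm{G}}_0^1/\widehat{\mathrm{G}}_1^1\cong\mathbb{C}^\ast$ — whereas $\widehat{\mathrm{G}}_0^1$ itself is not solvable: by Theorem~A its derived subgroup is $\widehat{\mathrm{G}}_1^1$, and from the commutator computations of \S\ref{sec:PoincareSiegel} the derived series then strictly decreases, $\widehat{\mathrm{G}}_1^1\supsetneq\widehat{\mathrm{G}}_3^1\supsetneq\widehat{\mathrm{G}}_7^1\supsetneq\cdots$, without ever reaching $\{\mathrm{id}\}$ (the group-theoretic counterpart of the non-solvability of the Lie algebra $x^2\,\mathbb{C}[[x]]\,\partial_x$). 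This contradiction forces $N=\{\mathrm{id}\}$.

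To prove the normal-subgroup assertion I would proceed in two steps. First, a nontrivial normal $N$ must meet $\widehat{\mathrm{G}}_1^1$ nontrivially: otherwise, choosing $\mathrm{id}\neq f\in N$ we would have $\lambda:=f'(0)\neq1$, and then either $\lambda$ is not a root of unity — so formal Poincaré linearisation (\S\ref{sec:PoincareSiegel}) conjugates $f$ to the homothety $\ell_\lambda\colon x\mapsto\lambda x$, whence $\ell_\lambda\in N$ by normality — or $\lambda$ is a primitive $m$-th root of unity, in which case $f$ either has infinite order (then $f^{\circ m}$ is a nontrivial element of $N\cap\widehat{\mathrm{G}}_1^1$, absurd) or has order $m$ (then the average $\tfrac1m\sum_{j=0}^{m-1}\lambda^{-j}f^{\circ j}\in\widehat{\mathrm{G}}_1^1$ conjugates $f$ to $\ell_\lambda$, so again $\ell_\lambda\in N$); and in the surviving cases $[\ell_\lambda,x+x^2]$ is a nontrivial element of $N\cap\widehat{\mathrm{G}}_1^1$ — a contradiction throughout. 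Second, starting from $g\in N\cap(\widehat{\mathrm{G}}_r^1\smallsetminus\widehat{\mathrm{G}}_{r+1}^1)$, I would exploit that $N\ni[g,h]$ for every $h\in\widehat{\mathrm{G}}_0^1$: the leading term of $[g,h]$ for $h\in\widehat{\mathrm{G}}_s^1\smallsetminus\widehat{\mathrm{G}}_{s+1}^1$ realises the bracket $[\alpha\,x^{r+1}\partial_x,\beta\,x^{s+1}\partial_x]=\alpha\beta(s-r)\,x^{r+s+1}\partial_x$, which is nonzero whenever $s\neq r$, while conjugation by the homotheties $\ell_\mu$ rescales leading coefficients; iterating, one produces nontrivial elements of $N$ tangent to the identity at every sufficiently large order and with arbitrary leading coefficient, and — together with the surjectivity of the commutator maps (Theorem~A) and a successive-approximation argument — one concludes $\widehat{\mathrm{G}}_m^1\subseteq N$ for some $m$. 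I expect this second step, and in particular the passage from ``$N$ meets $\widehat{\mathrm{G}}_r^1\smallsetminus\widehat{\mathrm{G}}_{r+1}^1$ for arbitrarily large $r$'' to ``$N\supseteq\widehat{\mathrm{G}}_m^1$'', to be the main technical obstacle.

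\emph{Non-co-Hopfianity of $\mathrm{G}_1^n$ and $\widehat{\mathrm{G}}_1^n$.} Here the plan is to exhibit one explicit injective endomorphism that is not onto, by the same formula in the convergent and the formal case. Consider the ramified map $\phi\colon(\mathbb{C}^n,0)\to(\mathbb{C}^n,0)$, $\phi(x_1,\ldots,x_n)=(x_1^2,\,x_1x_2,\ldots,x_1x_n)$; it is dominant, invariant under $x\mapsto-x$, generically two-to-one, and — unlike the naive $(x_1^2,x_2,\ldots,x_n)$, for which $f_1\circ\phi$ need not be a square — it spreads the ramified coordinate over all the variables. Concretely, for $f=(f_1,\ldots,f_n)\in\mathrm{G}_1^n$ a quick inspection of degrees shows that $f_1\circ\phi$ is divisible by $x_1^2$ with unit quotient $w$ (so $\sqrt{w}$, normalised by $\sqrt{w}(0)=1$, is well defined) and that $f_i\circ\phi$ is divisible by $x_1$ for $i\geq2$; hence $f$ admits the lift
\[
\iota(f)=\Big(x_1\sqrt{w},\ \frac{f_2\circ\phi}{x_1\sqrt{w}},\ \ldots,\ \frac{f_n\circ\phi}{x_1\sqrt{w}}\Big),
\]
which is the unique element of $\mathrm{G}_1^n$ with $\phi\circ\iota(f)=f\circ\phi$ (uniqueness because $\phi$ is dominant and both sides are tangent to the identity). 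This characterisation makes $\iota$ a homomorphism — $\phi\circ\iota(fh)=(fh)\circ\phi=\phi\circ(\iota(f)\,\iota(h))$ forces $\iota(fh)=\iota(f)\,\iota(h)$ — and it is injective since $\phi\circ\iota(f)=\phi$ forces $f=\mathrm{id}$. Finally, expanding the formula shows each component of $\iota(f)$ is of the form ``coordinate $+$ terms of order $\geq3$'', so $\iota(\mathrm{G}_1^n)\subseteq\mathrm{G}_2^n\subsetneq\mathrm{G}_1^n$ and $\iota$ is not surjective; the identical argument applies in $\widehat{\mathrm{G}}_1^n$. I do not foresee a real obstacle here beyond the bookkeeping of divisibilities — the content is entirely in the choice of $\phi$.
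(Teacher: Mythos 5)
Your co-Hopfian half is essentially the paper's proof: the paper uses exactly the map $E(x)=(x_1^2,x_1x_2,\ldots,x_1x_n)$ and the endomorphism $f\mapsto E^{-1}(f\circ E)$ (with $f\mapsto(f(x^2))^{1/2}$ for $n=1$); the only difference is that you detect non-surjectivity via the inclusion $\iota(\mathrm{G}_1^n)\subseteq\mathrm{G}_2^n$ while the paper observes that every element of the image commutes with $x\mapsto -x$. Both work, and your divisibility bookkeeping is correct.

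The Hopfian half, however, has a genuine gap, and it is precisely the step you yourself flag as the main obstacle: passing from ``$\ker\varphi$ contains a nontrivial element $f$ of $\widehat{\mathrm{G}}_1^1$'' to ``$\ker\varphi\supseteq\widehat{\mathrm{G}}_m^1$ for some $m$''. Your proposed route --- generate nontrivial elements of every sufficiently high order with arbitrary leading coefficient by commutators, then run a successive approximation --- does not close. Correcting the tail of a prescribed element order by order produces an infinite product, which converges in the Krull topology but only lands in the \emph{closure} of the normal subgroup; a normal subgroup of $\widehat{\mathrm{G}}_0^1$ has no reason to be closed, so this does not yield $\widehat{\mathrm{G}}_m^1\subseteq\ker\varphi$. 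The ingredient you are missing is the finite-determination statement of Lemma \ref{lem:tecconj}: for $f\in\widehat{\mathrm{G}}_1^1\smallsetminus\{\mathrm{id}\}$ there is an $\ell$ such that $f$ and $hf$ are conjugate in $\widehat{\mathrm{G}}_1^1$ for \emph{every} $h\in\widehat{\mathrm{G}}_\ell^1$; this follows from Propositions \ref{pro:tec1} and \ref{pro:tec2}, since the formal conjugacy class of $\exp X$ is governed by the invariants $(k,\lambda)$ of the normal form of $X$, which depend only on a finite jet. Writing $hf=gfg^{-1}$ then gives $h=[g,f]\in\ker\varphi$ in a single step, with no limiting process, and the rest of your reduction (quotient by $\widehat{\mathrm{G}}_\ell^1$ is solvable, $\widehat{\mathrm{G}}_0^1$ is not) goes through as in Theorem \ref{thm:G00hopfian}. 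Two smaller remarks: your non-solvability argument misquotes Theorem A, which computes $[\widehat{\mathrm{G}}_0^n,\widehat{\mathrm{G}}_k^n]$ and not $[\widehat{\mathrm{G}}_k^n,\widehat{\mathrm{G}}_k^n]$, so it does not by itself give the strict descent $\widehat{\mathrm{G}}_1^1\supsetneq\widehat{\mathrm{G}}_3^1\supsetneq\cdots$ (the paper instead contradicts solvability via the free subgroups of $\widehat{\mathrm{G}}_0^1$); and your first step (a nontrivial normal subgroup meets $\widehat{\mathrm{G}}_1^1$ nontrivially) is correct but more elaborate than needed --- the paper simply replaces $f\in\ker\varphi$ by a nontrivial commutator $[a,f]$.
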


Inspired by \cite{Deserti2} we study in \S\ref{sec:autG01} 
the automorphism groups of $\mathrm{G}_0^1$ and
$\widehat{\mathrm{G}}_0^1$:

\begin{theoalph}
{\sl The group $\mathrm{Aut}(\widehat{\mathrm{G}}_0^1)$ is 
generated by the inner automorphisms and the automorphisms of 
the field $\mathbb{C}$. In other words 
\[
\mathrm{Out}(\widehat{\mathrm{G}}_0^1)\simeq\mathrm{Aut}(\mathbb{C},+,\cdot)
\]
where $\mathrm{Out}(\widehat{\mathrm{G}}_0^1)$ denotes
the non-inner automorphisms of $\widehat{\mathrm{G}}_0^1$.

The group $\mathrm{Out}(\mathrm{G}_0^1)$ is isomorphic
to $\newfaktor{\mathbb{Z}}{2\mathbb{Z}}$.}
\end{theoalph}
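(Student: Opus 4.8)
The plan is to pin down an arbitrary $\varphi\in\mathrm{Aut}(\widehat{\mathrm{G}}_0^1)$ relative to the filtration $\widehat{\mathrm{G}}_0^1\supset\widehat{\mathrm{G}}_1^1\supset\widehat{\mathrm{G}}_2^1\supset\cdots$. By Theorem A, $\widehat{\mathrm{G}}_1^1=[\widehat{\mathrm{G}}_0^1,\widehat{\mathrm{G}}_0^1]$, and a commutator computation gives $[\widehat{\mathrm{G}}_1^1,\widehat{\mathrm{G}}_1^1]=\widehat{\mathrm{G}}_3^1$ and $[\widehat{\mathrm{G}}_1^1,\widehat{\mathrm{G}}_k^1]=\widehat{\mathrm{G}}_{k+1}^1$ for $k\ge 2$; these are verbal-type subgroups, so once $\widehat{\mathrm{G}}_1^1,\widehat{\mathrm{G}}_2^1,\widehat{\mathrm{G}}_3^1$ are known $\varphi$-invariant the whole filtration is, and $\varphi$ operates on every quotient $\widehat{\mathrm{G}}_k^1/\widehat{\mathrm{G}}_{k+1}^1$, which is canonically $(\mathbb{C},+)$ via the first non-zero Taylor coefficient. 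The diagonal torus $\mathrm{D}=\{z\mapsto\lambda z\ :\ \lambda\in\mathbb{C}^{*}\}$ acts on this $k$-th quotient through the character $\lambda\mapsto\lambda^{-k}$, and this weighted action is the lever for everything that follows.

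First I would recognise $\mathrm{D}$ group-theoretically: it is a maximal abelian subgroup (its centralizer is $\mathrm{D}$, since a formal series commuting with some non-resonant $z\mapsto\lambda z$ is linear), it contains elements of every finite order, and conversely any maximal abelian subgroup containing elements of all finite orders is conjugate to $\mathrm{D}$ — a torsion element of $\widehat{\mathrm{G}}_0^1$ is formally linearizable, and a Cartan-averaging argument lets one simultaneously linearize torsion of orders $2,3,5,7,\dots$ (each averaging carried out inside the centralizer of the previously normalized ones), after which the subgroup lies in $\mathrm{D}$ and equals it by maximality. Composing $\varphi$ with an inner automorphism we may assume $\varphi(\mathrm{D})=\mathrm{D}$, so $\varphi|_{\mathrm{D}}$ is an automorphism $\sigma$ of $(\mathbb{C}^{*},\cdot)$.

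Now $\varphi$ operates on $\widehat{\mathrm{G}}_1^1/\widehat{\mathrm{G}}_3^1\cong\mathbb{C}_{(1)}\oplus\mathbb{C}_{(2)}$ (with $\mathrm{D}$-weights $-1$ and $-2$) intertwining the $\mathrm{D}$-action with its $\sigma$-twist. Since $\pm\mu$ have the same square, a $\sigma$-semilinear additive map from the weight-$(-2)$ line to the weight-$(-1)$ line must vanish; hence $\varphi$ preserves $\mathbb{C}_{(2)}=\widehat{\mathrm{G}}_2^1/\widehat{\mathrm{G}}_3^1$, so $\widehat{\mathrm{G}}_2^1$, so by the identities above every $\widehat{\mathrm{G}}_k^1$; and on $\mathbb{C}_{(1)}$ the relation $\tau(\mu a)=\sigma(\mu)\tau(a)$ for the additive bijection $\tau$ induced by $\varphi$ forces $\sigma(\mu+\nu)=\sigma(\mu)+\sigma(\nu)$, i.e. $\sigma\in\mathrm{Aut}(\mathbb{C},+,\cdot)$. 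Let $\sigma_{*}$ be the automorphism of $\widehat{\mathrm{G}}_0^1$ applying $\sigma$ to Taylor coefficients; replacing $\varphi$ by $\sigma_{*}^{-1}\circ\varphi$ we may assume $\varphi$ fixes $\mathrm{D}$ pointwise. Then $\varphi$ acts on $\widehat{\mathrm{G}}_k^1/\widehat{\mathrm{G}}_{k+1}^1$ by a scalar $u_k$, and compatibility with the graded Lie bracket (which sends the $i$-th and $j$-th pieces to the $(i+j)$-th by $(a,b)\mapsto(j-i)ab$) gives $u_{i+j}=u_iu_j$ for $i\ne j$, hence $u_k=u_1^k$; composing with the inner automorphism by $z\mapsto u_1 z$ (which still fixes $\mathrm{D}$ pointwise) we reduce to the case where $\varphi$ acts trivially on every graded piece.

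It remains to prove that such a $\varphi$ — fixing $\mathrm{D}$ pointwise, trivial on all $\widehat{\mathrm{G}}_k^1/\widehat{\mathrm{G}}_{k+1}^1$ — is the identity. In the jet group $J_m=\widehat{\mathrm{G}}_0^1/\widehat{\mathrm{G}}_m^1=\mathrm{D}\ltimes U_m$ one inducts on the lower-central depth $\ell$: if $\varphi(x)x^{-1}\in U_m^{(\ell)}$ for all $x\in U_m$, then $x\mapsto[\varphi(x)x^{-1}]\in U_m^{(\ell)}/U_m^{(\ell+1)}$ is a $\mathrm{D}$-equivariant homomorphism from $U_m^{\mathrm{ab}}=\mathbb{C}_{(1)}\oplus\mathbb{C}_{(2)}$ into a weight-$(-\ell)$ line, and the failure of $(\mu+\nu)^{\ell}=\mu^{\ell}+\nu^{\ell}$ for $\ell\ge 2$ together with the parity obstruction on the weight-$(-2)$ line forces it to vanish, so $\varphi(x)x^{-1}\in U_m^{(\ell+1)}$; thus $\varphi=\mathrm{id}$ on each $J_m$, hence on $\widehat{\mathrm{G}}_0^1=\varprojlim_m J_m$. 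So every automorphism is a product of an inner one and some $\sigma_{*}$; a $\sigma_{*}$ that is inner must be conjugation by an element normalizing $\mathrm{D}$, i.e. by an element of $\mathrm{D}$, which acts trivially on $\mathrm{D}$, whence $\sigma=\mathrm{id}$ — so $\sigma\mapsto[\sigma_{*}]$ is an isomorphism $\mathrm{Aut}(\mathbb{C},+,\cdot)\xrightarrow{\ \sim\ }\mathrm{Out}(\widehat{\mathrm{G}}_0^1)$. For $\mathrm{G}_0^1$ the same scheme works verbatim (the torsion and Poincaré-domain linear germs used to recognise $\mathrm{D}$ are genuinely analytic), with one extra constraint: $\sigma_{*}$ must carry convergent series to convergent series, and a discontinuous field automorphism of $\mathbb{C}$, being unbounded on every disc, turns some convergent series into a divergent one; hence only $\sigma=\mathrm{id}$ and complex conjugation survive, the latter non-inner, so $\mathrm{Out}(\mathrm{G}_0^1)\cong\mathbb{Z}/2\mathbb{Z}$. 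I expect the main obstacles to be the intrinsic recognition of the conjugacy class of $\mathrm{D}$ (simultaneous linearization of all torsion orders while preserving earlier normalizations) and making the jet-group rigidity argument uniform in $m$ and compatible with the inverse limit.
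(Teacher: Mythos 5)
For the formal group $\widehat{\mathrm{G}}_0^1$ your argument is essentially sound and arrives at the same two pivotal relations as the paper (the semilinearity on the weight-$(-1)$ line forcing $\sigma\in\mathrm{Aut}(\mathbb{C},+,\cdot)$, and the multiplicativity $\varepsilon_k\varepsilon_\ell=\varepsilon_{k+\ell}$ coming from the graded bracket / Baker--Campbell--Hausdorff), but by a genuinely different route. The paper classifies \emph{all} maximal abelian subgroups (conjugates of $\mathrm{A}_0$ and of the $\mathrm{A}_{k,\lambda}$, Theorem \ref{thm:abmaxssgp}, which rests on the normal forms of Proposition \ref{pro:tec2} and a centralizer computation), separates them by counting torsion, and pins down $\sigma(\mathrm{A}_{k,0})=\mathrm{A}_{k,0}$ via the $\mathrm{A}_0$-action and Lemma \ref{lem:tec1}; you instead recognize only $\mathrm{A}_0$ and make the whole filtration characteristic through verbal identities ($\widehat{\mathrm{G}}_1^1$ is the derived group, $[\widehat{\mathrm{G}}_1^1,\widehat{\mathrm{G}}_1^1]=\widehat{\mathrm{G}}_3^1$, $[\widehat{\mathrm{G}}_1^1,\widehat{\mathrm{G}}_k^1]=\widehat{\mathrm{G}}_{k+1}^1$) plus the parity obstruction that recovers $\widehat{\mathrm{G}}_2^1$. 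That is a real economy of input. Two small cautions: the verbal identities themselves need the successive-approximation argument (they are not in the paper), and the base case of your lower-central induction is not a single weight line ($U_m^{\mathrm{ab}}\cong\mathbb{C}_{(1)}\oplus\mathbb{C}_{(2)}$ maps to itself), so it must be combined with the already-established triviality on the graded pieces; the paper's closing step is instead a direct generation argument, since it knows $\sigma$ fixes the homotheties and every $\exp tx^{k+1}\frac{\partial}{\partial x}$ exactly, and these generate each $\newfaktor{\widehat{\mathrm{G}}_0^1}{\widehat{\mathrm{G}}_k^1}$.

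The claim that the scheme works ``verbatim'' for $\mathrm{G}_0^1$ is where the proposal has a genuine gap, and it is precisely where the paper changes strategy. First, your characteristic-filtration mechanism would require $[\mathrm{G}_1^1,\mathrm{G}_1^1]=\mathrm{G}_3^1$ and $[\mathrm{G}_1^1,\mathrm{G}_k^1]=\mathrm{G}_{k+1}^1$ in the holomorphic category; the successive-approximation proof of such identities converges only in the Krull topology, i.e.\ yields formal products of commutators, and the paper's commutator results (Theorem \ref{thm:com}) are proved by conjugating with a linear contraction, a tool unavailable inside $\mathrm{G}_1^1$. Second, your recognition of $\mathrm{D}$ requires simultaneously \emph{holomorphically} linearizing the infinite torsion subgroup of $\varphi(\mathrm{D})$ and controlling the non-periodic elements, whose multipliers may lie on the unit circle: this is a small-divisor problem, and the paper's own remark on Perez-Marco's examples (uncountable maximal abelian subgroups not conjugate to $\mathrm{A}_0$ or $\mathrm{A}_{k,\lambda}$) shows the naive statement about maximal abelian subgroups fails holomorphically. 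The paper circumvents both points: it uses only that $\mathrm{G}_1^1$ is the derived subgroup to see that $\sigma(\mathrm{A}_{k,0})$ is conjugate to some $\mathrm{A}_{k,\lambda}$ (Theorem \ref{thm:maxabsub}, which needs the uncountability hypotheses and rests on Ecalle and Cerveau--Moussu), locates $\sigma(\mathrm{A}_0)$ inside the normalizer of that group by Lemma \ref{lem:per} and a count of periodic elements, extends $\sigma$ to $\widehat{\sigma}\in\mathrm{Aut}(\widehat{\mathrm{G}}_0^1)$ through the jet quotients, and only then invokes the formal theorem together with the convergence obstruction on the field automorphism --- that last step being the one place where your sketch and the paper's proof genuinely coincide.
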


\section{Notations and Definitions}

Let $n$ be a positive integer; consider 
\[
\mathrm{G}_0^n=\mathrm{Diff}(\mathbb{C}^n,0)=\big\{f\colon\mathbb{C}^n_{,0}\to\mathbb{C}^n_{,0}\text{ holomorphic }\,\vert\, Df_{(0)}\in\mathrm{GL}(\mathbb{C}^n)\big\}.
\]
Denote by $\widehat{\mathrm{G}}_0^n$ the formal completion
of $\mathrm{G}_0^n$; in other words, $\widehat{\mathrm{G}}_0^n$
is the set of formal diffeomorphisms
$\widehat{f}=(\widehat{f}_1,\widehat{f}_2,\ldots,\widehat{f}_n)$ 
where 
\begin{itemize}
\item[$\diamond$] $\widehat{f}_i$ is a formal series, 
\item[$\diamond$] $\widehat{f}(0)=0$,
\item[$\diamond$] and $D\widehat{f}_{(0)}$ belongs to~$\mathrm{GL}(\mathbb{C}^n)$.
\end{itemize}
More generally, let $k$ be a positive integer; consider $\mathrm{G}_k^n$ the set of 
elements of $\mathrm{G}_0^n$ tangent
to the identity at order $k$ ({\it i.e.} 
$f$ belongs to $\mathrm{G}_k^n$ if and only if $f=\mathrm{id}+$ terms 
of order $\geq k+1$), and $\widehat{\mathrm{G}}_k^n$ be the
formal completion of $\mathrm{G}_k^n$.

Each $\mathrm{G}_k^n$'s (resp. $\widehat{\mathrm{G}}_k^n$'s)
is a normal subgroup of $\mathrm{G}_0^n$ 
(resp. $\widehat{\mathrm{G}}_0^n$). The quotients 
$\newfaktor{\mathrm{G}_0^n}{\mathrm{G}_k^n}$ and~$\newfaktor{\widehat{\mathrm{G}}_0^n}{\widehat{\mathrm{G}}_k^n}$ 
are isomorphic, and can be identified with the 
group of polynomial maps
\[
\mathrm{Pol}_k^n=\big\{f\colon\mathbb{C}^n\to\mathbb{C}^n\text{ polynomial }\,\vert\,f(0)=0,\, \deg f\leq k, \,Df_{(0)}\in\mathrm{GL}(\mathbb{C}^n)\big\}
\]
whose the law group is the law of composition truncated to 
order $k+1$.

Remark that $\newfaktor{\mathrm{G}_0^n}{\mathrm{G}_k^n}$ acts
faithfully on the space $\mathbb{C}[x_1,x_2,\ldots,x_n]_k$
of polynomials of degree less or equal than $k$
(still by truncated composition). Therefore, the 
$\newfaktor{\mathrm{G}_0^n}{\mathrm{G}_k^n}$'s can be identified
with subgroups of $\mathrm{GL}(\mathbb{C}[x_1,x_2,\ldots,x_n]_k)$.

For elements $g$ and $h$ of a group $\mathrm{H}$, the \textbf{commutator} of $g$ and $h$ is $[g,h]=ghg^{-1}h^{-1}$.
The \textbf{derived subgroup} $[\mathrm{H},\mathrm{H}]$ (also called the \textbf{commutator subgroup}) of $\mathrm{H}$ is the subgroup generated by all the commutators.
This construction can be iterated:
\[
\left\{
\begin{array}{ll}
\mathrm{H}^{(0)}:=\mathrm{H}\\
\mathrm{H}^{(n)}:=[\mathrm{H}^{(n-1)},\mathrm{H}^{(n-1)}]\quad n\in \mathbb{N} \end{array}
\right.
\]
The groups $\mathrm{H}^{(2)}$, $\mathrm{H}^{(3)}$, $\ldots$ are called the second derived subgroup, third derived subgroup, and so forth, and the descending normal series
\[
\cdots \triangleleft \mathrm{H}^{(2)}\triangleleft \mathrm{H}^{(1)}\triangleleft \mathrm{H}^{(0)}=\mathrm{H}
\]
is called the \textbf{derived series}. A \textbf{solvable group} is a group whose derived series terminates in the trivial subgroup.

The groups $\mathrm{G}_0^1$ and $\widehat{\mathrm{G}}_0^1$
contain free subgroups (\cite{BerthierCerveauLinsNeto}).
It's a bit surprising since $\widehat{\mathrm{G}}_0^1$ appears
as the limit of solvable groups
$\newfaktor{\mathrm{G}_0^1}{\mathrm{G}_k^1}$. To construct 
free subgroups the authors use the following deep 
result (\cite{Cohen}): consider the two following
homeomorphisms of $\mathbb{R}$
\begin{align*}
    & f_1(x)=x+1, && f_2(x)=x^3,
\end{align*}
then the group generated by $f_1$ and $f_2$ is a free group.

Note that $f_2$ is invariant by the involution $\frac{1}{x}$; 
hence the group generated by $\widetilde{f_1}(x)=\frac{x}{1+x}$
and $f_2(x)=x^3$ is free. As a consequence, the group generated
by the germs of analytic diffeomorphisms at the origin of~$\mathbb{R}$
\begin{align*}
    & \widetilde{f_1}(x)=\frac{x}{1+x}&& \text{and} && \widetilde{f_2}(x)=f_2^{-1}\widetilde{f_1}f_2(x)=\frac{x}{(1+x^3)^{1/3}} 
\end{align*}
is a free subgroup; it induces a free subgroup of $\mathrm{G}_0^1$ (and $\widehat{\mathrm{G}}_0^1$).

Denote by $p_k\colon\mathrm{G}_0^n\to\newfaktor{\mathrm{G}_0^n}{\mathrm{G}_k^n}$
the projection.

\smallskip

Let $\mathfrak{M}_n$ be the maximal ideal of 
$\mathcal{O}(\mathbb{C}^n,0)$ given by 
\[
\mathfrak{M}_n=\big\{f\in\mathcal{O}(\mathbb{C}^n,0)\,\vert\, f(0)=0\big\}.
\]
Denote by $\chi_0^n$ the set of germs of holomorphic vector fields at
the origin of $\mathbb{C}^n$, and by $\widehat{\chi}_0^n$ its
formal completion. Let $\chi_k^n$ be the subspace of $\chi_0^n$ 
made up of vector fields that vanish at $0$ at order $k-1$, 
that is $\chi_k^n=\mathfrak{M}_n^k\chi_0^n$. Finally, let us 
denote by $\widehat{\chi}_k^n$ the formal completion of 
$\chi_k^n$.

We recall some classical facts; proofs can be found for instance 
in \cite{CanoCerveauDeserti}. Let $X$ be an element 
of~$\widehat{\chi}_2^n$; denote by $\exp tX\in\widehat{\mathrm{G}}_1^n$
the one-parameter subgroup associated to $X$. By definition 
$\exp tX$ is solution of the O.D.E.
\[
\frac{\partial}{\partial t}\varphi_t(x)=X\big(\varphi_t(x)\big)
\]
with initial condition $\varphi_0(x)=x$.
It is easy too see that $\exp tX$ is "polynomial in the parameter
$t$", that is:
\[
\exp tX=\mathrm{Id}+\displaystyle\sum_{i=1}^\infty t^iA_i
\]
where the $A_i$'s belong to $\big(\mathfrak{M}_n^{i+1}\big)^n$.
In particular, $\exp X$ is well defined in $\widehat{\mathrm{G}}_1^n$.

\begin{pro}\label{pro:tec1}
{\sl The following properties hold.
\begin{itemize}
\item[$\diamond$] The map 
$\exp\colon\widehat{\chi}_2^n \to\widehat{\mathrm{G}}_1^n$ is bijective.

\item[$\diamond$] Let $X$, $Y$ be two elements of 
$\widehat{\chi}_2^n$. Then, $f=\exp X$ and $g=\exp Y$ commute if and only if $X$
and $Y$ commute.
\end{itemize}}
\end{pro}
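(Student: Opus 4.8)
\emph{Strategy.} Everything is driven by the two filtrations $\widehat{\mathrm{G}}_1^n\supset\widehat{\mathrm{G}}_2^n\supset\cdots$ and $\widehat\chi_2^n\supset\widehat\chi_3^n\supset\cdots$ together with one elementary \emph{valuation estimate}: if $X\in\widehat\chi_2^n$ and $W$ is a formal vector field of valuation $\ge m\ge 2$, then
\[
\exp(X+W)\;\equiv\;\exp(X)+W\qquad\text{modulo maps of valuation }>m ,
\]
where $\exp(X)+W$ denotes the formal map $\exp(X)$ with $W$ (viewed as a map $\mathbb{C}^n\to\mathbb{C}^n$) added to it. This follows at once from $\exp tX=\mathrm{Id}+\sum_{i\ge1}t^iA_i$ with $A_i\in(\mathfrak{M}_n^{i+1})^n$: expanding each $A_i(X+W)$ multilinearly, one has $A_1(X+W)-A_1(X)=W$, and for $i\ge2$ every term of $A_i(X+W)-A_i(X)$ carries a factor $W$ (valuation $\ge m$) together with at least one more factor or derivative of valuation $\ge1$, hence has valuation $>m$. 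In particular the jet of $\exp X$ to any prescribed order depends only on the corresponding jet of $X$, so $\exp$ is compatible with truncation.

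\emph{Bijectivity.} For surjectivity I build a preimage of $f\in\widehat{\mathrm{G}}_1^n$ degree by degree: set $X_0=0$ (so $\exp X_0\equiv f$ modulo $\widehat{\mathrm{G}}_1^n$), and given $X_k\in\widehat\chi_2^n$ with $\exp X_k\equiv f$ modulo $\widehat{\mathrm{G}}_{k+1}^n$, i.e.\ $f-\exp X_k$ of valuation $\ge k+2$, let $W$ be the homogeneous part of degree $k+2$ of $f-\exp X_k$ and put $X_{k+1}=X_k+W$. The estimate (with $m=k+2$) gives $\exp X_{k+1}\equiv\exp X_k+W\equiv f$ modulo $\widehat{\mathrm{G}}_{k+2}^n$, while $X_{k+1}\equiv X_k$ modulo $\widehat\chi_{k+2}^n$; so the sequence $(X_k)$ converges to some $X\in\widehat\chi_2^n$ with $\exp X=f$. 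For injectivity I show by induction on $k$ that $\exp X\equiv\exp Y$ modulo $\widehat{\mathrm{G}}_{k+1}^n$ forces $X\equiv Y$ modulo $\widehat\chi_{k+2}^n$: by the inductive hypothesis $Y=X+W$ with $W\in\widehat\chi_{k+1}^n$, and the estimate then gives $\exp Y\equiv\exp X+W$ modulo $\widehat{\mathrm{G}}_{k+1}^n$, forcing $W\in\widehat\chi_{k+2}^n$. Letting $k\to\infty$ yields $X=Y$. (Concretely this also produces the formal logarithm $\log\colon\widehat{\mathrm{G}}_1^n\to\widehat\chi_2^n$.)

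\emph{Commutation.} The tool here is naturality of $\exp$ under conjugation: for $\varphi\in\widehat{\mathrm{G}}_1^n$ the curve $t\mapsto\varphi\circ\exp(tY)\circ\varphi^{-1}$ is a one-parameter subgroup with generator the push-forward $\varphi_*Y\in\widehat\chi_2^n$, so $\varphi\circ\exp(Y)\circ\varphi^{-1}=\exp(\varphi_*Y)$; together with the injectivity above this shows that $f=\exp X$ and $g=\exp Y$ commute if and only if $(\exp X)_*Y=Y$, equivalently $(\exp Y)_*X=X$. If $[X,Y]=0$, the identity $\tfrac{d}{dt}(\exp tX)_*Y=-(\exp tX)_*[X,Y]$ shows that $t\mapsto(\exp tX)_*Y$ is constant equal to $Y$, so $f$ and $g$ commute. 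Conversely, if they commute then $(\exp Y)_*X=X$, hence $(\exp mY)_*X=X$ for every $m\in\mathbb{Z}$; but modulo each $\widehat\chi_k^n$ the map $t\mapsto(\exp tY)_*X$ is polynomial in $t$ (because $\exp(\pm tY)$ is), and a polynomial that is constant on $\mathbb{Z}$ is constant, so $(\exp tY)_*X=X$ for all $t$; differentiating at $t=0$ gives $[X,Y]=0$.

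\emph{Main obstacle.} Nothing here is deep, but two steps deserve attention. The first is the valuation bookkeeping underlying the displayed estimate: one has to verify honestly that in every elementary term of $A_i$ $(i\ge2)$ sufficiently many derivatives fall on factors other than the low-valuation factor $W$, the crude ``one derivative lost per composition'' count being enough only after it is checked case by case. The second --- the one place where the formal (rather than convergent) setting is really used --- is the polynomiality-in-$t$ argument that upgrades invariance of $X$ under all integer powers of $g$ to invariance under the whole formal one-parameter flow of $Y$.
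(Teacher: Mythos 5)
Your proof is correct. Note, however, that the paper does not prove this proposition at all: it is stated as a recalled classical fact, with the proof delegated to the reference \cite{CanoCerveauDeserti}. So there is no in-paper argument to compare against; what you have written is a self-contained justification of exactly the kind that reference supplies. Your route is the standard one and uses only the single ingredient the paper does record, namely the expansion $\exp tX=\mathrm{Id}+\sum_{i\ge1}t^iA_i$ with $A_i\in(\mathfrak{M}_n^{i+1})^n$: from it you extract the valuation estimate $\exp(X+W)\equiv\exp(X)+W$ modulo valuation $>m$, which gives bijectivity by the usual successive-approximation scheme, and you handle the commutation statement via naturality $\varphi\circ\exp(Y)\circ\varphi^{-1}=\exp(\varphi_*Y)$ plus injectivity. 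The two delicate points you flag are indeed the right ones, and both are handled correctly: the valuation count for $i\ge2$ (one factor $W$ of valuation $\ge m$, the remaining factors of valuation $\ge2$, and $i-1$ derivatives each costing one) gives valuation $\ge m+i-1>m$; and the upgrade from $(\exp Y)_*X=X$ to $(\exp tY)_*X=X$ for all $t$ via degreewise polynomiality in $t$ is precisely the step that is special to the formal category and cannot be skipped. Minor index slips aside (e.g.\ ``modulo $\widehat{\mathrm{G}}_{k+1}^n$'' where you mean the difference has valuation $\ge k+2$, and a harmless sign convention in $\tfrac{d}{dt}(\exp tX)_*Y=-(\exp tX)_*[X,Y]$), the argument is complete.
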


\begin{rem}
The first property is no longer true in the holomorphic case: if $h$ belongs to 
$\mathrm{G}_1^1$ and $h=\exp X$, then $X$ is most of the time divergent 
(\cite{Ecalle}).
\end{rem}

In dimension $1$, there are normal forms for elements of 
$\chi_2^1$ and $\widehat{\chi}_2^1$ as follows:

\begin{pro}\label{pro:tec2}
{\sl Let $X$ be an element of $\chi_2^1$ $($resp. $\widehat{\chi}_2^1)$. Then, 
$X$ is holomorphically $($resp. formally$)$ conjugated to a vector field of the
type $\frac{x^{k+1}}{1+\lambda x^k}\frac{\partial}{\partial x}$, 
$k\in\mathbb{N}$, $k\geq 1$, $\lambda\in\mathbb{C}$.}
\end{pro}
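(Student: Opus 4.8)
The plan is to work with the ``time form'' of the vector field instead of the vector field itself. Write $X=a(x)\frac{\partial}{\partial x}$ with $a\in\mathfrak{M}_1^2$ (resp. $a$ a formal series vanishing at order $\geq 2$); we may assume $X\neq 0$, so that $\mathrm{ord}_0 a=k+1$ for some $k\geq 1$ and $a(x)=x^{k+1}v(x)$ with $v(0)\neq 0$. If $y=\varphi(x)$ with $\varphi\in\mathrm{G}_0^1$ (resp. $\widehat{\mathrm{G}}_0^1$), then $X$ is carried to $\widetilde{X}=\widetilde{a}(y)\frac{\partial}{\partial y}$ with $\widetilde{a}(\varphi(x))=\varphi'(x)\,a(x)$, which is equivalent to saying that the meromorphic $1$-form $\omega_X=\frac{dx}{a(x)}$ satisfies $\varphi^*\omega_{\widetilde{X}}=\omega_X$. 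Hence it suffices to produce $\lambda\in\mathbb{C}$ and $\varphi$ with $\varphi^*\omega_Y=\omega_X$, where $Y=\frac{y^{k+1}}{1+\lambda y^{k}}\frac{\partial}{\partial y}$, so that $\omega_Y=\big(y^{-k-1}+\lambda y^{-1}\big)\,dy$.

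Next I would read off $\lambda$ from the residue. The Laurent expansion of $\omega_X$ at $0$ has a pole of order $k+1$; set $\lambda=\mathrm{Res}_0\,\omega_X$, a quantity intrinsic to $X$ (hence forced), which together with $k$ will turn out to be a complete conjugacy invariant. Choosing primitives one gets $\omega_X=dF$ with $F(x)=\frac{P(x)}{x^{k}}+\lambda\log x+G(x)$, where $P$ is a polynomial of degree $\leq k-1$ with $P(0)\neq 0$ and $G$ is holomorphic (resp. formal), and $\omega_Y=dH$ with $H(y)=-\frac{1}{ky^{k}}+\lambda\log y$. The identity $\varphi^*\omega_Y=\omega_X$ is then equivalent to $H\circ\varphi=F+\mathrm{const}$. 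Writing $\varphi(x)=x\,\psi(x)$ with $\psi(0)\neq 0$, the two $\lambda\log x$ terms cancel — and this is precisely where the choice $\lambda=\mathrm{Res}_0\,\omega_X$ is used — so that, after multiplying through by $x^{k}$ (and absorbing the integration constant into $G$), the problem reduces to a single scalar equation
\[
\Phi\big(x,\psi(x)\big)=0,\qquad \Phi(x,\psi)=-\frac{1}{k\psi^{k}}+\lambda x^{k}\log\psi-x^{k}G(x)-P(x),
\]
for which $\Phi\big(0,\psi(0)\big)=0$ forces $\psi(0)^{k}$ to equal an explicit nonzero constant (namely $v(0)$), so $\psi(0)$ is determined up to a $k$-th root of unity, while $\frac{\partial\Phi}{\partial\psi}\big(0,\psi(0)\big)=\psi(0)^{-k-1}\neq 0$.

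Finally I would solve this scalar equation: by the holomorphic implicit function theorem in the analytic case, and by successive resolution coefficient by coefficient in the formal case. Either way there is a unique $\psi$ with the prescribed value $\psi(0)$, and $\varphi(x)=x\,\psi(x)$ is the sought coordinate change carrying $X$ to $Y$. The step I expect to be the main obstacle is the bookkeeping around the logarithm: a naive attempt to kill the Laurent coefficients of $\omega_X$ one at a time by elementary coordinate changes succeeds for every coefficient except that of $x^{-1}$, whose obstruction is exactly the residue, and one must be careful to read $H\circ\varphi=F+\mathrm{const}$ as an identity of ``Laurent series plus a constant multiple of $\log x$'' and to check that the coefficients of $\log x$ on the two sides really coincide. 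Once this point is settled the remainder is a routine application of the implicit function theorem, and one also obtains for free that the normal form is unique up to the value of $\lambda=\mathrm{Res}_0\,\omega_X$. (The case $X=0$ is excluded, or treated by convention, since it carries no normal form of the stated shape.)
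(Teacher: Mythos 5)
Your argument is correct and follows the same route the paper intends: the paper's entire proof is the remark that the statement is ``a direct application of the implicit function theorem,'' and your reduction via the time form $\omega_X=\frac{dx}{a(x)}$, the forced choice $\lambda=\mathrm{Res}_0\,\omega_X$, and the resulting scalar equation $\Phi(x,\psi)=0$ with $\frac{\partial\Phi}{\partial\psi}\neq 0$ is precisely that standard argument (in both the convergent and the coefficient-by-coefficient formal versions). Nothing to add beyond your own correct caveat that $X=0$ must be excluded from the statement.
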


The proof is a direct application of the implicit function 
theorem (\cite{CanoCerveauDeserti}). For instance the one-parameter subgroup of $x^2\frac{\partial}{\partial x}$ is
$\exp tx^2\frac{\partial}{\partial x}=\frac{x}{1-tx}$.

\bigskip

\section{Poincar\'e linearisation theorems and their consequences}\label{sec:PoincareSiegel}

Let $\lambda=(\lambda_1,\lambda_2,\ldots,\lambda_n)$ be a 
$n$-tuple of non-zero complex numbers. We say that $\lambda$ is
\textbf{without resonance} if the equality 
$\lambda_1^{p_1}\lambda_2^{p_2}\ldots\lambda_n^{p_n}=\lambda_j$, 
$p_\ell\in\mathbb{N}$, implies $p_j=1$, and $p_\ell=0$ for 
$\ell\not=j$. If $A\in\mathrm{GL}(n,\mathbb{C})$, we denote by 
$\mathrm{Spec}(A)$ the non-ordored set of its eigenvalues 
$(\lambda_1,\lambda_2,\ldots,\lambda_n)$.

\begin{thm}[Formal Poincar\'e Theorem, \cite{Arnold}]
{\sl Let $f$ be an element of $\widehat{\mathrm{G}}_0^n$. 
Set $A=Df_{(0)}$. 

Assume that $\mathrm{Spec}(A)$
is without resonance.

Then, $f$ is formally conjugate to $A$, {\it i.e.} there 
exists $\varphi\in\widehat{\mathrm{G}}_1^n$ such that 
$f=\varphi A\varphi^{-1}$.}
\end{thm}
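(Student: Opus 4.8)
The plan is to construct the conjugating map directly, degree by degree, by solving the associated homological equations; the only ingredient beyond elementary expansions is the non-resonance hypothesis. First I would reduce to a diagonal linear part. Non-resonance forces the eigenvalues $\lambda_1,\ldots,\lambda_n$ of $A$ to be pairwise distinct: if $\lambda_i=\lambda_j$ with $i\neq j$, then the multi-index $p=e_i$ (so $p_i=1$ and $p_\ell=0$ otherwise) satisfies $\lambda^p:=\lambda_1^{p_1}\cdots\lambda_n^{p_n}=\lambda_i=\lambda_j$ while $p_j=0\neq 1$, contradicting the definition of non-resonance. Hence $A$ is diagonalizable. Since conjugating $f$ by a constant matrix $P\in\mathrm{GL}(n,\mathbb{C})$ turns a conjugator $\varphi=\mathrm{Id}+(\text{higher order})$ into $P\varphi P^{-1}=\mathrm{Id}+(\text{higher order})$, still in $\widehat{\mathrm{G}}_1^n$, we may assume $A=\mathrm{diag}(\lambda_1,\ldots,\lambda_n)$.

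Next I set up the functional equation. Write $f=A+N$ with $N\in(\mathfrak{M}_n^2)^n$, and look for $\varphi=\mathrm{Id}+\Phi$, $\Phi\in(\mathfrak{M}_n^2)^n$, such that $f\circ\varphi=\varphi\circ A$. Expanding both sides, this is equivalent to
\[
\Phi(Ax)-A\,\Phi(x)=N\big(x+\Phi(x)\big).
\]
Decompose $\Phi=\sum_{k\geq 2}\varphi_k$ into homogeneous components $\varphi_k\in H_k$, where $H_k$ is the finite-dimensional space of $\mathbb{C}^n$-valued homogeneous polynomial maps of degree $k$. The degree-$k$ homogeneous part of the left-hand side is $L_A(\varphi_k)$, where $L_A\colon h\mapsto h(Ax)-A\,h(x)$ is a linear endomorphism of $H_k$; a short check shows the degree-$k$ part of the right-hand side is a polynomial expression in $N$ and in $\varphi_2,\ldots,\varphi_{k-1}$ only, because $N$ vanishes to order $2$ so substituting $x+\Phi(x)$ for $x$ strictly raises degrees. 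Thus the equation splits into the infinite triangular system $L_A(\varphi_k)=(\text{known from lower orders})$, $k=2,3,\ldots$, solvable step by step provided each $L_A$ is invertible.

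The key computation is the invertibility of $L_A$ on each $H_k$. In the monomial basis $\{x^p e_j : |p|=k,\ 1\leq j\leq n\}$ one gets $L_A(x^p e_j)=(\lambda^p-\lambda_j)\,x^p e_j$, so $L_A$ is diagonal with eigenvalues $\lambda^p-\lambda_j$. The non-resonance condition says exactly that $\lambda^p=\lambda_j$ with $p\in\mathbb{N}^n$ forces $|p|=1$; since here $|p|=k\geq 2$, all these eigenvalues are non-zero, so $L_A$ is an automorphism of $H_k$ for every $k\geq 2$. The recursion then determines $\Phi$ uniquely, $\varphi=\mathrm{Id}+\Phi$ lies in $\widehat{\mathrm{G}}_1^n$ by construction, and $f=\varphi A\varphi^{-1}$.

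The step I expect to require the most care is the verification, in the second paragraph, that the degree-$k$ part of $N(x+\Phi(x))$ genuinely involves only $\varphi_2,\ldots,\varphi_{k-1}$ and not $\varphi_k$ itself — this is precisely where $N\in(\mathfrak{M}_n^2)^n$ is used — together with the (purely formal, hence unproblematic) remark that each homogeneous component of $\varphi$ is fixed after finitely many steps, so no convergence question arises. Everything else reduces to the two elementary computations of $L_A(x^p e_j)$ and of the expansion of $f\circ\varphi$.
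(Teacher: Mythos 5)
Your proof is correct and complete: the reduction to diagonal $A$ via pairwise-distinct eigenvalues, the homological equation $\Phi(Ax)-A\Phi(x)=N(x+\Phi(x))$, the diagonalization of $L_A$ on monomials with eigenvalues $\lambda^p-\lambda_j$, and the triangularity of the recursion (using that $N$ vanishes to order $2$) are exactly the standard argument from \cite{Arnold}. The paper itself states this theorem as a classical result with a citation and gives no proof, so there is nothing internal to compare against; your write-up supplies the expected one.
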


\begin{thm}[Holomorphic Poincar\'e Theorem, \cite{Arnold}]
{\sl Let $\phi$ be an element of $\mathrm{G}_0^n$. 
Set $A=D\phi_{(0)}$.

Assume that $\mathrm{Spec}(A)$ is without resonance, and that 
$\mathrm{Spec}(A)$ is contained
either in the open unit disk $\mathbb{D}(0,1)$, or in its 
complement $\mathbb{C}\smallsetminus\mathbb{D}(0,1)$. 

Then, $\phi$ is holomorphically conjugate to $A$, {\it i.e.} there 
exists $\varphi\in\mathrm{G}_1^n$ such that 
$\phi=\varphi A\varphi^{-1}$.}
\end{thm}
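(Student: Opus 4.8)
The theorem is classical and is usually quoted from \cite{Arnold}; the plan I would follow is to upgrade the purely formal linearization provided by the Formal Poincar\'e Theorem to a \emph{convergent} one, using the fact that in the Poincar\'e domain no genuine small-divisor analysis is needed. First I would reduce the hypotheses. Replacing $\phi$ by $\phi^{-1}$ if necessary — note that $D(\phi^{-1})_{(0)}=A^{-1}$, that the no-resonance condition is invariant under $\lambda_i\mapsto\lambda_i^{-1}$, and that a conjugacy $\phi^{-1}=\varphi\circ A^{-1}\circ\varphi^{-1}$ yields $\phi=\varphi\circ A\circ\varphi^{-1}$ — I may assume $\mathrm{Spec}(A)\subset\mathbb{D}(0,1)$. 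The no-resonance hypothesis forces the $\lambda_i$ to be pairwise distinct, so $A$ is diagonalizable; after a linear change of coordinates I assume $A=\mathrm{diag}(\lambda_1,\dots,\lambda_n)$. By the Formal Poincar\'e Theorem there is $\varphi=\mathrm{id}+h\in\widehat{\mathrm{G}}_1^n$ with $\varphi\circ A=\phi\circ\varphi$; writing $\phi=A+\Phi$ with $\Phi$ of order $\geq 2$, this is equivalent to the fixed-point equation
\[
\mathcal{L}h=\Phi\circ(\mathrm{id}+h),\qquad\text{where}\quad\mathcal{L}h:=h\circ A-A\,h .
\]
Everything then reduces to showing that the unique formal solution $h$ of this equation has a positive radius of convergence.

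The heart of the matter is the behaviour of $\mathcal{L}$. It preserves the space of $\mathbb{C}^n$-valued formal series of order $\geq 2$ and is diagonal in the monomial basis: $\mathcal{L}(x^p e_j)=(\lambda^p-\lambda_j)\,x^p e_j$. Since $|\lambda_i|\leq\mu<1$ for every $i$, one has $|\lambda^p|\leq\mu^{|p|}\to 0$ as $|p|\to\infty$, while $|\lambda_j|\geq m>0$; hence $|\lambda^p-\lambda_j|\geq m/2$ for all but finitely many $p$, and for the finitely many remaining multi-indices $\lambda^p-\lambda_j\neq 0$ by the absence of resonance. Therefore there is a constant $c>0$ with $|\lambda^p-\lambda_j|\geq c$ for all $j$ and all $|p|\geq 2$, so $\mathcal{L}$ is invertible on order-$\geq 2$ series and $\mathcal{L}^{-1}$ merely multiplies each Taylor coefficient by a factor of modulus $\leq c^{-1}$; in particular $\mathcal{L}^{-1}$ does not decrease radii of convergence. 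This is exactly where the Poincar\'e-domain hypothesis enters: the divisors stay bounded away from $0$, in contrast with the Siegel situation where they accumulate to $0$.

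I would then run a contraction argument. Fix a small $\rho>0$ and work in the Banach algebra $B_\rho$ of germs $g=\sum a_p x^p$ with $\|g\|_\rho:=\sum\|a_p\|\,\rho^{|p|}<\infty$, in which composition with an order-$\geq 1$ germ satisfies the usual estimates on small balls. The map $T(h):=\mathcal{L}^{-1}\big(\Phi\circ(\mathrm{id}+h)\big)$ sends, for $\rho$ small enough, a small closed ball of $B_\rho$ into itself and is a contraction there: $\Phi$ has order $\geq 2$, so $\|\Phi\circ(\mathrm{id}+h)\|_\rho$ and the associated Lipschitz constant are $O(\rho)$, while $\mathcal{L}^{-1}$ contributes only the fixed factor $\leq c^{-1}$. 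By the contraction mapping principle $T$ has a unique fixed point $h\in B_\rho$, which must coincide with the formal solution; hence $\varphi=\mathrm{id}+h$ actually lies in $\mathrm{G}_1^n$ and $\phi=\varphi\circ A\circ\varphi^{-1}$. Undoing the linear change of coordinates gives the statement, and passing back through $\phi\mapsto\phi^{-1}$ if we were in the second alternative of the hypothesis.

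The only non-formal content is the analytic bookkeeping in $B_\rho$ (equivalently, Poincar\'e's classical majorant-series estimate): verifying that composition with an order-$\geq 2$ germ is genuinely contracting on a small ball and that $T$ stabilizes such a ball. I expect this to be the main — and really the only — obstacle: the algebraic part (diagonalizing $\mathcal{L}$, locating its spectrum $\{\lambda^p-\lambda_j\}$) and the key inequality $|\lambda^p-\lambda_j|\geq c$ are immediate consequences of the hypotheses, whereas the composition and fixed-point estimates must be set up carefully, and they are precisely what fails in the Siegel domain.
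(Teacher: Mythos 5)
The paper offers no proof of this statement: it is quoted verbatim from \cite{Arnold} as a classical result, so there is nothing internal to compare against. Your argument is the standard proof of the Poincar\'e linearisation theorem and is correct in outline: reducing the conjugacy $\phi\circ\varphi=\varphi\circ A$ to the fixed-point equation $\mathcal{L}h=\Phi\circ(\mathrm{id}+h)$ with $\mathcal{L}h=h\circ A-Ah$, diagonalising $\mathcal{L}$ on monomials with spectrum $\{\lambda^p-\lambda_j\}$, getting the uniform lower bound $|\lambda^p-\lambda_j|\geq c>0$ from $|\lambda^p|\to 0$ plus non-resonance for the finitely many remaining indices, and closing with a contraction in a weighted $\ell^1$ Banach algebra is exactly the classical route (equivalent to Poincar\'e's majorant method); note also that with the paper's definition of resonance the eigenvalues are indeed forced to be pairwise distinct, so your diagonalisation of $A$ is legitimate. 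One point deserves attention: your reduction ``replace $\phi$ by $\phi^{-1}$ so that $\mathrm{Spec}(A)\subset\mathbb{D}(0,1)$'' requires the second alternative of the hypothesis to mean $|\lambda_i|>1$ for all $i$. As literally written, $\mathbb{C}\smallsetminus\mathbb{D}(0,1)$ contains the unit circle, so the hypothesis would admit elliptic germs, for which the conclusion is false (Cremer's examples already in dimension one) and for which your key estimate $\max_i|\lambda_i|\leq\mu<1$ after inversion is unavailable. You are implicitly --- and correctly --- reading the hypothesis as ``complement of the \emph{closed} unit disk''; it would be worth saying so explicitly, since this is precisely the Poincar\'e-domain condition your divisor bound depends on.
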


We say that $\phi$ is \textbf{formally linearisable} 
(resp. \textbf{holomorphically linearisable}) in the 
formal (resp. holomorphic) case. In both cases the 
diffeomorphism $\varphi$ is called the 
\textbf{linearising map}.

\begin{rem}
There are generalisations of the previous result; the first 
are due to Siegel: when 
$\mathrm{Spec}(A)=(\lambda_1,\lambda_2,\ldots,\lambda_n)$ 
is contained neither in $\mathbb{D}(0,1)$,
nor in $\complement\mathbb{D}(0,1)$; it requires 
diophantine conditions controlling
$\big\vert\lambda_1^{p_1}\lambda_2^{p_2}\ldots\lambda_n^{p_n}-\lambda_j\big\vert$ 
that produce the convergence of the linearising maps
(\cite{Siegel1, Siegel2}). 
\end{rem}

\begin{rem}
In the Poincar\'e Theorems assume that $\phi$ can be 
written as $Ah$ where $h=\mathrm{id}+\ldots\in\mathrm{G}_k^n$;
then the linearising map $\varphi$ can be chosen in 
$\mathrm{G}_k^n$ (resp. $\widehat{\mathrm{G}}_k^n$ in the 
formal case). 
\end{rem}

Curiously while the proof of Poincar\'e Theorem comes from
analysis (in its holomorphic version) we deduce from it 
algebraic properties. 

\begin{thm}\label{thm:com}
{\sl We have $[\mathrm{G}_0^n,\mathrm{G}_1^n]=\mathrm{G}_1^n$, 
and similarly 
$[\widehat{\mathrm{G}}_0^n,\widehat{\mathrm{G}}_1^n]=\widehat{\mathrm{G}}_1^n$.}
\end{thm}

\begin{rem}
Theorem \ref{thm:com} has been proved in \cite{CanoCerveauDeserti} for $n=1$.
\end{rem}

\begin{proof}[{\sl Proof}]
Let us first remark that if $f$, $g$ are elements of 
$\mathrm{G}_0^n$ and $\mathrm{G}_1^n$ respectively, then
the commutator $[f,g]=fgf^{-1}g^{-1}$ is an element of 
$\mathrm{G}_1^n$.

Let $A$ be an element of $\mathrm{GL}(n,\mathbb{C})$ that 
satisfies the assumptions of Poincar\'e Theorem, for instance
$A=\lambda\,\mathrm{id}$ with $0<\vert\lambda\vert< 1$. If $h$
belongs to $\mathrm{G}_1^n$, then $hA$ is linearisable, 
that is there exists $\varphi\in\mathrm{G}_1^n$ such that 
\[
\varphi A\varphi^{-1}=hA.
\]
As a result, $[\varphi, A]=h$, and we get 
the result.

A similar proof works in the formal case.
\end{proof}

\begin{cor}
{\sl The commutator of $\mathrm{G}_0^n$ is given by:
\[
[\mathrm{G}_0^n,\mathrm{G}_0^n]=\big\{f\in\mathrm{G}_0^n\,\vert\, \det Df_{(0)}=1\big\}.
\]}
\end{cor}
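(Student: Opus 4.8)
The plan is to prove both inclusions. For the easy inclusion $\subseteq$, I would note that $\det D(\cdot)_{(0)}\colon\mathrm{G}_0^n\to\mathbb{C}^*$ is a group homomorphism (since $D(fg)_{(0)}=Df_{(0)}\circ Dg_{(0)}$ by the chain rule, and $\det$ is multiplicative), and $\mathbb{C}^*$ is abelian, so any commutator lies in the kernel $\mathrm{SDiff} := \{f\in\mathrm{G}_0^n\mid\det Df_{(0)}=1\}$; hence $[\mathrm{G}_0^n,\mathrm{G}_0^n]\subseteq\mathrm{SDiff}$.

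For the reverse inclusion, let $f\in\mathrm{G}_0^n$ with $\det Df_{(0)}=1$. The idea is to reduce to the linear part and then to the tangent-to-identity part, invoking Theorem \ref{thm:com}. Write $A=Df_{(0)}\in\mathrm{SL}(n,\mathbb{C})$, and factor $f = A\cdot(A^{-1}f)$; since $A^{-1}f\in\mathrm{G}_1^n$ and $\mathrm{G}_1^n=[\mathrm{G}_0^n,\mathrm{G}_1^n]\subseteq[\mathrm{G}_0^n,\mathrm{G}_0^n]$ by Theorem \ref{thm:com}, it suffices to show that the linear map $A$, viewed as an element of $\mathrm{G}_0^n$, lies in $[\mathrm{G}_0^n,\mathrm{G}_0^n]$. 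For this I would use the classical fact (Dieudonn\'e, as cited in the introduction) that the special linear group $\mathrm{SL}(n,\mathbb{C})$ is perfect, i.e.\ $\mathrm{SL}(n,\mathbb{C})=[\mathrm{SL}(n,\mathbb{C}),\mathrm{SL}(n,\mathbb{C})]$ — indeed $\mathrm{SL}(n,\mathbb{C})$ is generated by elementary transvections, each of which is a commutator of elements of $\mathrm{SL}(n,\mathbb{C})$ when $n\geq 2$, and for $n=1$ the group $\mathrm{SL}(1,\mathbb{C})$ is trivial so there is nothing to prove. Since $\mathrm{SL}(n,\mathbb{C})$ embeds in $\mathrm{G}_0^n$ as linear diffeomorphisms, writing $A$ as a product of commutators in $\mathrm{SL}(n,\mathbb{C})$ exhibits it as a product of commutators in $\mathrm{G}_0^n$, so $A\in[\mathrm{G}_0^n,\mathrm{G}_0^n]$. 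Combining, $f=A\cdot(A^{-1}f)\in[\mathrm{G}_0^n,\mathrm{G}_0^n]$, which gives $\mathrm{SDiff}\subseteq[\mathrm{G}_0^n,\mathrm{G}_0^n]$.

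The main obstacle, such as it is, is the case $n=1$: there $\mathrm{SL}(1,\mathbb{C})$ is trivial and the perfectness argument is vacuous but harmless — one simply has $A=\mathrm{id}$ and $f\in\mathrm{G}_1^1=[\mathrm{G}_0^1,\mathrm{G}_1^1]$ directly. Beyond that, the only point requiring care is making sure the embedding $\mathrm{SL}(n,\mathbb{C})\hookrightarrow\mathrm{G}_0^n$ is compatible with taking commutators, which is immediate since it is a group homomorphism. Everything else is a bookkeeping assembly of Theorem \ref{thm:com} with the standard perfectness of $\mathrm{SL}(n,\mathbb{C})$.
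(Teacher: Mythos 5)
Your proof is correct and follows essentially the same route as the paper: both decompose $f=Ah$ with $A=Df_{(0)}\in\mathrm{SL}(n,\mathbb{C})$ and $h\in\mathrm{G}_1^n$, handle $h$ via Theorem \ref{thm:com}, and handle $A$ via the fact that $\mathrm{SL}(n,\mathbb{C})$ consists of products of commutators of linear maps. Your additional remarks (the homomorphism $f\mapsto\det Df_{(0)}$ for the easy inclusion, the transvection argument, and the vacuous $n=1$ case) only flesh out details the paper leaves implicit.
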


\begin{proof}[{\sl Proof}]
The inclusion 
\[
\big\{f\in\mathrm{G}_0^n\,\vert\, \det Df_{(0)}=1\big\}\subset[\mathrm{G}_0^n,\mathrm{G}_0^n].
\]
holds. So, we just need to prove that any element $f$
such that $\det Df_{(0)}=1$ is a product of 
commutators of~$\mathrm{G}_0^n.$ Write $f=Ah$
with $A=Df_{(0)}\in\mathrm{SL}(n,\mathbb{C})$
and $h\in\mathrm{G}_1^n$. On the one hand, 
$A\in\mathrm{SL}(n,\mathbb{C})$ is a product of 
commutators of $\mathrm{GL}(n,\mathbb{C})\subset\mathrm{G}_0^n$; 
and, one the other hand, $h$ is a product
of commutators of $\mathrm{G}_0^n$ (Theorem~\ref{thm:com}).
Consequently, $f$ is a product of 
commutators of~$\mathrm{G}_0^n.$
\end{proof}

Theorem \ref{thm:com} can be generalised as follows:

\begin{thm}
{\sl Any element of $\mathrm{G}_k^n$ is the commutator
of an element of $\mathrm{G}_0^n$ and an element 
of~$\mathrm{G}_k^n$, {\it i.e.} $[\mathrm{G}_0^n,\mathrm{G}_k^n]=\mathrm{G}_k^n$.}
\end{thm}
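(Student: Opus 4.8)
Since $\mathrm{G}_k^n$ is a normal subgroup of $\mathrm{G}_0^n$, any commutator $[f,g]=(fgf^{-1})g^{-1}$ with $f\in\mathrm{G}_0^n$ and $g\in\mathrm{G}_k^n$ belongs to $\mathrm{G}_k^n$; hence $[\mathrm{G}_0^n,\mathrm{G}_k^n]\subseteq\mathrm{G}_k^n$, and it suffices to realise an arbitrary element of $\mathrm{G}_k^n$ as such a commutator. The plan is to run the argument of Theorem~\ref{thm:com}, upgraded by the refinement of Poincar\'e's Theorem recorded in the remark above (the linearising map may be chosen in $\mathrm{G}_k^n$).

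Fix, exactly as in the proof of Theorem~\ref{thm:com}, a linear map $A\in\mathrm{GL}(n,\mathbb{C})$ satisfying the hypotheses of the holomorphic Poincar\'e Theorem, for instance $A=\lambda\,\mathrm{id}$ with $0<|\lambda|<1$. Let $h\in\mathrm{G}_k^n$ and set $\phi:=hA$. As $h$ is tangent to the identity at order $k$ we have $D\phi_{(0)}=A$, and $\phi$ can be written in the form $A\,\widetilde h$ with $\widetilde h\in\mathrm{G}_k^n$ (take $\widetilde h=A^{-1}hA$, conjugation by the invertible linear map $A$ preserving the order of tangency to the identity). By the refined Poincar\'e Theorem there is therefore a linearising map $\varphi\in\mathrm{G}_k^n$ with $\varphi A\varphi^{-1}=\phi=hA$, whence
\[
[\varphi,A]=\varphi A\varphi^{-1}A^{-1}=hAA^{-1}=h .
\]

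Applying this to $h^{-1}\in\mathrm{G}_k^n$ in place of $h$ and using $[\varphi,A]^{-1}=[A,\varphi]$, we obtain $h=[A,\varphi]$ with $A\in\mathrm{G}_0^n$ and $\varphi\in\mathrm{G}_k^n$; thus every element of $\mathrm{G}_k^n$ is the commutator of an element of $\mathrm{G}_0^n$ and an element of $\mathrm{G}_k^n$, and $[\mathrm{G}_0^n,\mathrm{G}_k^n]=\mathrm{G}_k^n$. The analogous identity $[\widehat{\mathrm{G}}_0^n,\widehat{\mathrm{G}}_k^n]=\widehat{\mathrm{G}}_k^n$ in the formal category follows verbatim, using the formal Poincar\'e Theorem and its refinement.

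The only step that is not routine bookkeeping is the assertion that the linearising map may be taken in $\mathrm{G}_k^n$, i.e. the refinement of Poincar\'e's Theorem stated in the remark above; it is obtained by solving the homological equations step by step and noting that, since $\phi\equiv A$ modulo terms of order $\geq k+1$, all corrections to $\varphi$ of degree $\leq k$ vanish. Granting that, the proof is a direct transcription of the one of Theorem~\ref{thm:com}.
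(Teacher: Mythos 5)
Your proof is correct and is exactly the argument the paper intends: the paper states this theorem without proof as a "generalisation" of Theorem~\ref{thm:com}, relying on the preceding remark that the linearising map for $\phi=Ah$, $h\in\mathrm{G}_k^n$, can be chosen in $\mathrm{G}_k^n$, and your write-up simply carries out that plan (including the small but necessary observation that $hA=A(A^{-1}hA)$ with $A^{-1}hA\in\mathrm{G}_k^n$). No discrepancy with the paper's approach.
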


We deduce from it the following statement that can be 
useful: 

\begin{pro}
{\sl Let $\mathrm{H}$ be a group, and let
$\tau\colon\mathrm{G}_0^n\to\mathrm{H}$ be a group 
homomorphism. 

Assume that there exists $f$ in $\ker\tau$ such that
$Df_{(0)}=A$ satisfies the assumptions of Poincar\'e
Theorem. Then $\mathrm{G}_1^n$ is 
contained in $\ker\tau$.}
\end{pro}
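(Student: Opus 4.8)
The plan is to combine the Holomorphic Poincar\'e Theorem with the commutator identity of Theorem~\ref{thm:com}. The crux is that one element $f\in\ker\tau$ whose linear part $A=Df_{(0)}$ is of Poincar\'e type already forces $A$ itself (viewed in $\mathrm{GL}(n,\mathbb{C})\subset\mathrm{G}_0^n$) to lie in $\ker\tau$; and the proof of Theorem~\ref{thm:com} shows that \emph{every} element of $\mathrm{G}_1^n$ is a commutator $[\psi,A]$ with $\psi\in\mathrm{G}_1^n$ for such an $A$, so this single membership $A\in\ker\tau$ propagates to all of $\mathrm{G}_1^n$.

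First I would apply the Holomorphic Poincar\'e Theorem to $f$: since $\mathrm{Spec}(A)$ is without resonance and is contained in $\mathbb{D}(0,1)$ or in its complement, there is $\varphi\in\mathrm{G}_1^n$ with $f=\varphi A\varphi^{-1}$. Applying the homomorphism $\tau$ and using $\tau(f)=1$ we get $\tau(A)=\tau(\varphi)^{-1}\tau(f)\tau(\varphi)=1$, that is $A\in\ker\tau$.

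Next, given an arbitrary $h\in\mathrm{G}_1^n$, I would repeat the argument from the proof of Theorem~\ref{thm:com} with this particular $A$ in place of the homothety $\lambda\,\mathrm{id}$: the composition $hA$ has linear part $A$ at the origin, hence is holomorphically linearisable, and (by the remark following the Poincar\'e Theorems) the linearising map can be chosen in $\mathrm{G}_1^n$; thus there is $\psi\in\mathrm{G}_1^n$ with $\psi A\psi^{-1}=hA$, equivalently $h=[\psi,A]$. Applying $\tau$ and using $A\in\ker\tau$ gives $\tau(h)=[\tau(\psi),\tau(A)]=1$. As $h$ ranges over $\mathrm{G}_1^n$ this yields $\mathrm{G}_1^n\subseteq\ker\tau$.

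The proof is essentially routine once it is organised this way; the only thing to notice — and what I expect to be the subtle point rather than a genuine obstacle — is that the argument proving $[\mathrm{G}_0^n,\mathrm{G}_1^n]=\mathrm{G}_1^n$ works verbatim with any linear map satisfying the Poincar\'e hypotheses, not merely with a scalar matrix, so that the available element $A=Df_{(0)}$ can be used as the second entry of the commutators and its lying in $\ker\tau$ can be spread over the whole normal subgroup $\mathrm{G}_1^n$.
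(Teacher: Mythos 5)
Your proof is correct and follows essentially the same idea as the paper: use Poincar\'e linearisation to write an arbitrary $h\in\mathrm{G}_1^n$ as a commutator whose second entry lies in $\ker\tau$. The only (cosmetic) difference is that you first transfer membership in $\ker\tau$ from $f$ to its linear part $A$ and then write $h=[\psi,A]$, whereas the paper skips this intermediate step by conjugating $hf$ directly to $f$ (both being linearisable to the same $A$) to get $h=[\varphi,f]$.
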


\begin{proof}[{\sl Proof}]
Let $h$ be an element of $\mathrm{G}_1^n$. Since $A$ satisfies the 
assumptions of Poincar\'e Theorem, then 
$hf$ and $f$ are conjugate: there exists
$\varphi$ in $\mathrm{G}_1^n$ such that 
$hf=\varphi f\varphi^{-1}$. As a consequence, 
$h=[\varphi,f]$ and $h$
belongs to~$\ker\tau$.
\end{proof}

\begin{rem}
Since $A$ and $f\in\ker\tau$ are conjugate, $A$ belongs 
to $\ker\tau$, and so does the normal subgroup of 
$\mathrm{GL}(n,\mathbb{C})$ generated by $A$. The quotient
$\newfaktor{\mathrm{G}_0^n}{\ker \tau}\simeq\mathrm{im}\,\tau$
can thus be identified with the quotient 
$\newfaktor{\mathrm{GL}(\mathbb{C}^n)}{\mathrm{GL}(\mathbb{C}^n)\cap\ker\tau}$.
\end{rem}

Poincar\'e theorem gives the description of 
germs of diffeomorphisms with generic linear part. 
When the linear part is not generic, for instance
in dimension $1$, the formal classification is relatively easy whereas the holomorphic one is rather 
difficult. There are many contributions (\cite{Ecalle, PerezMarco, Yoccoz, Malgrangebourbaki, ArnoldIlyashenko, Arnold}).

\bigskip

\section{Finite groups}

Finite subgroups of $\mathrm{G}_0^n$ are described by 
the following classical result:

\begin{thm}\label{thm:clas}
Let $\mathrm{H}$ be a finite subgroup of $\mathrm{G}_0^n$.
Denote by 
\[
\mathrm{H}_0=\big\{Df_{(0)}\,\vert\, f\in\mathrm{H}\big\}\subset\mathrm{GL}(n,\mathbb{C})
\]
the linear part of $\mathrm{H}$.
Then, $\mathrm{H}$ is holomorphically conjugate to 
$\mathrm{H}_0$, {\it i.e.} there exists
$\varphi\in\mathrm{G}_0^n$
such that $\mathrm{H}=\varphi^{-1}\mathrm{H}_0\varphi$.
\end{thm}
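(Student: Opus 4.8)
The plan is to produce the linearizing diffeomorphism by the classical averaging trick, exactly as in the proof that a compact group acting on a manifold with a fixed point can be linearized near that point. Let $\mathrm{H}\subset\mathrm{G}_0^n$ be finite, say $|\mathrm{H}|=m$, and for $f\in\mathrm{H}$ write $A_f=Df_{(0)}\in\mathrm{GL}(n,\mathbb{C})$. The first step is to record that $f\mapsto A_f$ is a group homomorphism $\mathrm{H}\to\mathrm{GL}(n,\mathbb{C})$ (the chain rule, since every $f\in\mathrm{H}$ fixes $0$), so $\mathrm{H}_0$ is genuinely a subgroup of $\mathrm{GL}(n,\mathbb{C})$ and $|\mathrm{H}_0|$ divides $m$. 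The second step is to define the candidate conjugacy
\[
\varphi=\frac{1}{m}\sum_{f\in\mathrm{H}} A_f^{-1}\circ f.
\]
This is a well-defined element of $\mathrm{G}_0^n$: each summand $A_f^{-1}\circ f$ is a germ of holomorphic map fixing $0$, and its linear part at $0$ is $A_f^{-1}A_f=\mathrm{id}$, so $\varphi$ fixes $0$ and $D\varphi_{(0)}=\mathrm{id}\in\mathrm{GL}(n,\mathbb{C})$; in particular $\varphi\in\mathrm{G}_1^n\subset\mathrm{G}_0^n$ and $\varphi$ is invertible as a germ.

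The third and central step is the equivariance computation. For $g\in\mathrm{H}$ one computes
\[
\varphi\circ g=\frac{1}{m}\sum_{f\in\mathrm{H}} A_f^{-1}\circ f\circ g
=\frac{1}{m}\sum_{h\in\mathrm{H}} A_{hg^{-1}}^{-1}\circ h
=A_g\circ\frac{1}{m}\sum_{h\in\mathrm{H}} A_h^{-1}\circ h
=A_g\circ\varphi,
\]
where I substituted $h=fg$ (so $f=hg^{-1}$ and $f$ ranges over $\mathrm{H}$ iff $h$ does) and used the homomorphism property $A_{hg^{-1}}^{-1}=A_g A_h^{-1}$, pulling the constant matrix $A_g$ out of the sum by linearity. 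Hence $\varphi\circ g\circ\varphi^{-1}=A_g$ for every $g\in\mathrm{H}$, i.e. $\varphi\,\mathrm{H}\,\varphi^{-1}=\mathrm{H}_0$; replacing $\varphi$ by $\varphi^{-1}$ gives the statement in the form $\mathrm{H}=\varphi^{-1}\mathrm{H}_0\varphi$.

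I expect the only real subtlety — rather than a true obstacle — to be the justification that $\varphi$ is again a germ of \emph{biholomorphism}, which here is immediate because its $1$-jet is the identity, so one never even needs to invoke Poincar\'e's theorem or the inverse function theorem beyond its germ version. One should also note that the argument is insensitive to convergence issues, so the same averaging gives the formal analogue in $\widehat{\mathrm{G}}_0^n$ verbatim if desired. Two small remarks worth including: the construction uses in an essential way that $0$ is a common fixed point of all of $\mathrm{H}$ (automatic here since $\mathrm{G}_0^n$ consists of germs fixing $0$), and that finiteness is used only to make the averaging sum finite, so the statement extends to any compact subgroup once one replaces $\frac1m\sum$ by integration against normalized Haar measure.
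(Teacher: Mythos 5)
Your proposal is correct and follows essentially the same route as the paper: the classical averaging construction $\varphi=\frac{1}{m}\sum_{f\in\mathrm{H}}(Df_{(0)})^{-1}\circ f$ (the paper omits the harmless factor $\frac{1}{m}$, getting $D\varphi_{(0)}=(\#\mathrm{H})\,\mathrm{id}$ instead of $\mathrm{id}$), followed by the same reindexing computation showing $\varphi\circ g=Dg_{(0)}\circ\varphi$. Your additional remarks on invertibility of the germ and on the formal/compact-group variants are accurate but not needed beyond what the paper records.
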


\begin{proof}
The proof relies on a classical average argument (like 
in the Cartan-Bochner's theorem about the local linearisation of the 
action of a compact Lie group near a fixed point).
Consider the map
$\varphi=\displaystyle\sum_{h\in\mathrm{H}}(Dh_{(0)})^{-1}\circ h$;
it is holomorphic, and 
$D\varphi_{(0)}=(\#\mathrm{H})\mathrm{id}$, so $\varphi$ 
belongs to $\mathrm{G}_0^n$.
Finally, let us remark that for any $f\in\mathrm{H}$ one 
has
\[
\varphi\circ f=\displaystyle\sum_{h\in\mathrm{H}}(Dh_{(0)})^{-1}\circ h\circ f=Df_{(0)} \displaystyle\sum_{h\circ f\in\mathrm{H}}(D(h\circ f)_{(0)})^{-1}\circ (h\circ f)=Df_{(0)}\circ\varphi.
\]
\end{proof}

\bigskip

\section{Residually finite groups}\label{sec:residuallyfinite}

There are a number of equivalent definitions of residually finite
groups; we will use the following one:

\begin{defi}
A group $\mathrm{H}$ is \textbf{residually finite} if for every
element $h$ in $\mathrm{H}\smallsetminus\{\mathrm{id}_{\mathrm{H}}\}$ 
there exists 
a group morphism $\varphi\colon\mathrm{H}\to\mathrm{F}$ 
from $\mathrm{H}$ to a finite 
group $\mathrm{F}$ such that $\varphi(h)\not=~\mathrm{id}_{\mathrm{F}}$.
\end{defi}

The 
groups $\mathbb{Z}$ and $\mathrm{SL}(n,\mathbb{Z})$ are
residually finite (by reduction modulo $p$). Subgroups of 
residually finite groups are residually finite.
Conversely, a non-finite simple group is not residually finite, 
and the Baumslag-Solitar group
\[
\mathrm{BS}(2,3)=\langle a,\, b\,\vert\, a^{-1}b^2a=b^3\rangle
\]
is not residually finite (\cite{BaumslagSolitar}).

A group is \textbf{linear} if it is isomorphic to a 
subgroup of $\mathrm{GL}(n,\Bbbk)$ where $\Bbbk$ is 
a field. 

Malcev established the following fundamental result:

\begin{thm}[\cite{Malcev1, Malcev2}]\label{thm:Malcev}
{\sl A finitely generated linear group is residually finite.}
\end{thm}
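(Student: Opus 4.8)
The plan is to run Malcev's original argument, reducing residual finiteness to a Nullstellensatz-type statement about finitely generated rings. Let $\mathrm{H}\subseteq\mathrm{GL}(n,\Bbbk)$ be generated by finitely many matrices $h_1,h_2,\ldots,h_m$. First I would pass from $\Bbbk$ to a small ring: let $R\subseteq\Bbbk$ be the subring generated over $\mathbb{Z}$ by all the entries of the $h_i$ together with all the entries of the $h_i^{-1}$. Then $R$ is a finitely generated $\mathbb{Z}$-algebra, it is an integral domain since it is a subring of the field $\Bbbk$, and by construction every generator and its inverse lies in $\mathrm{M}_n(R)$; hence $\mathrm{H}\subseteq\mathrm{GL}(n,R)$.

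Next, fix $h\in\mathrm{H}\smallsetminus\{\mathrm{Id}\}$; some entry of the matrix $h-\mathrm{Id}$ is a nonzero element $a\in R$. The core of the proof is the claim that there is a ring homomorphism $\psi\colon R\to\mathbb{F}$ onto a \emph{finite} field with $\psi(a)\neq 0$. To produce it I would localise: $R_a:=R[1/a]$ is again a finitely generated $\mathbb{Z}$-algebra, and it is nonzero because $a$ is a nonzero element of the domain $R$, so $R_a$ has a maximal ideal $\mathfrak{m}$. Now apply the following form of Hilbert's Nullstellensatz: every maximal ideal of a finitely generated $\mathbb{Z}$-algebra has finite residue field --- equivalently, a field finitely generated as a ring is finite (in positive characteristic this is Zariski's lemma over $\mathbb{F}_p$; the characteristic-zero case needs a short extra argument ruling out number fields). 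Thus $\mathbb{F}:=R_a/\mathfrak{m}$ is finite, and the composite $\psi\colon R\hookrightarrow R_a\twoheadrightarrow\mathbb{F}$ sends $a$, which is a unit in $R_a$, to a nonzero element of $\mathbb{F}$.

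Finally I would reduce entrywise. The homomorphism $\psi$ induces a group homomorphism $\Psi\colon\mathrm{GL}(n,R)\to\mathrm{GL}(n,\mathbb{F})$, $\Psi(M)=(\psi(M_{ij}))_{i,j}$: it is well defined because $\det\Psi(M)=\psi(\det M)$ is a unit, and it is multiplicative and preserves $\mathrm{Id}$ because $\psi$ is a ring homomorphism. Restricting $\Psi$ to $\mathrm{H}$, the $(i,j)$-entry of $\Psi(h)-\mathrm{Id}$ is $\psi$ applied to the corresponding entry of $h-\mathrm{Id}$; for the entry chosen above it equals $\psi(a)\neq 0$, so $\Psi(h)\neq\mathrm{Id}$. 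Since $\mathrm{GL}(n,\mathbb{F})$ is finite, this is exactly the finite quotient demanded by the definition of residual finiteness, and $\mathrm{H}$ is residually finite.

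The only real obstacle is the Nullstellensatz step, i.e. the fact that residue fields of maximal ideals of finitely generated $\mathbb{Z}$-algebras are finite; once this is granted, the localisation trick $R[1/a]$ and the entrywise reduction are routine bookkeeping.
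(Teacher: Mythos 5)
The paper offers no proof of this statement: it is quoted as a known theorem with a citation to Malcev's papers, so there is nothing internal to compare against. Your argument is correct and complete, and it is in fact the standard proof (essentially Malcev's original one): passing to the finitely generated subring $R$ generated by the entries of the generators and their inverses, localising at a nonzero entry $a$ of $h-\mathrm{Id}$, and invoking the general Nullstellensatz --- that a maximal ideal of a finitely generated $\mathbb{Z}$-algebra has finite residue field --- to obtain a reduction $\Psi\colon\mathrm{GL}(n,R)\to\mathrm{GL}(n,\mathbb{F})$ with $\Psi(h)\neq\mathrm{Id}$. You have correctly isolated the one genuinely nontrivial input (the finiteness of fields that are finitely generated as rings, including the characteristic-zero exclusion), and the remaining steps are sound as written.
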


In particular, there is no faithful linear representation
of a non-residually finite, finitely generated group.

\begin{rem}
The assumption "finitely generated" turns out to be essential.
For instance, $\mathbb{Q}$ is not resi\-dually finite. If $\xi$
is a positive transcendental number, then 
$\xi^{\mathbb{Q}}=\big\{\xi^{\frac{p}{q}}\,\vert\,\frac{p}{q}\in\mathbb{Q}\big\}$ 
is isomorphic to $\mathbb{Q}$. As a subgroup of a residually
finite group is residually finite, we thus get that
$\mathrm{GL}(1,\mathbb{C})\simeq\mathbb{C}^*$, 
$\mathrm{GL}(n,\mathbb{C})$, $\mathrm{G}_0^n$
and $\widehat{\mathrm{G}}_0^n$ are not 
residually finite.
\end{rem}

The following statement, 
that is mentioned in \cite{ClaudonLorayPereiraTouzet} without 
detail in the $1$-dimensional and formal case,  is a direct consequence of Theorem 
\ref{thm:Malcev}:

\begin{thm}\label{thm:fgG0n}
{\sl Any finitely generated subgroup 
of $\mathrm{G}_0^n$ $($resp. $\widehat{\mathrm{G}}_0^n)$ 
is resi\-dually finite.}
\end{thm}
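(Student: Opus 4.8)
The plan is to reduce the problem to Malcev's theorem (Theorem \ref{thm:Malcev}) by exhibiting a faithful linear representation of an arbitrary finitely generated subgroup. Let $\mathrm{H}$ be a finitely generated subgroup of $\mathrm{G}_0^n$ (the formal case is identical), generated by $f_1,f_2,\ldots,f_m$. The key observation is that the truncation maps $p_k\colon\mathrm{G}_0^n\to\newfaktor{\mathrm{G}_0^n}{\mathrm{G}_k^n}$ separate points: if $f\in\mathrm{G}_0^n$ is not the identity, then $f$ differs from $\mathrm{id}$ in some jet, so $p_k(f)\neq\mathrm{id}$ for $k$ large enough. Each quotient $\newfaktor{\mathrm{G}_0^n}{\mathrm{G}_k^n}$ embeds into $\mathrm{GL}(\mathbb{C}[x_1,\ldots,x_n]_k)$, as recalled in Section~\ref{sec:residuallyfinite}, so it is a linear group.

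However, separating a single nontrivial element by one $p_k$ is not yet enough — Malcev's theorem requires \emph{one} faithful linear representation of the whole group $\mathrm{H}$. So the second step is to choose $k$ uniformly. Here I would use that $\mathrm{H}$ is finitely generated: the intersection $\bigcap_{k\geq 1}\ker(p_k|_{\mathrm{H}})$ is trivial (by the separation property above), and I claim that already $\ker(p_k|_{\mathrm{H}})$ is trivial for $k$ sufficiently large. Actually the cleanest route avoids even this: consider the homomorphism $P\colon\mathrm{H}\to\prod_{k\geq 1}\newfaktor{\mathrm{G}_0^n}{\mathrm{G}_k^n}$ obtained by assembling all the $p_k$. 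It is injective by the separation property. Each factor is linear, but an infinite product of linear groups need not be linear — so this still does not directly apply Malcev. The correct finishing move is therefore: $\mathrm{H}$ is \emph{residually} (finitely generated linear); since each $\newfaktor{\mathrm{G}_0^n}{\mathrm{G}_k^n}$ is finitely generated linear (it is a subgroup of $\mathrm{GL}(N,\mathbb{C})$ and the image $p_k(\mathrm{H})$ is finitely generated), each $p_k(\mathrm{H})$ is residually finite by Theorem~\ref{thm:Malcev}; and a group that is residually (residually finite) is residually finite. Combined with the injectivity of $P$, this gives that $\mathrm{H}$ itself is residually finite.

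Concretely: given $h\in\mathrm{H}\smallsetminus\{\mathrm{id}\}$, pick $k$ with $p_k(h)\neq\mathrm{id}$; then $p_k(\mathrm{H})\subset\mathrm{GL}(\mathbb{C}[x_1,\ldots,x_n]_k)$ is finitely generated and linear, hence residually finite by Malcev, so there is a morphism $\psi$ from $p_k(\mathrm{H})$ to a finite group with $\psi(p_k(h))\neq\mathrm{id}$; then $\psi\circ p_k|_{\mathrm{H}}$ is the desired morphism to a finite group not killing $h$. This verifies the definition directly and sidesteps the linearity-of-products issue entirely. The second assertion, that a finitely generated non-residually-finite group $\mathrm{H}$ cannot embed into $\mathrm{G}_0^n$, is then immediate: an embedding would realize $\mathrm{H}$ as a finitely generated subgroup of $\mathrm{G}_0^n$, which we have just shown is residually finite, contradicting the hypothesis.

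The only real subtlety — and the step I would be most careful about — is the one just flagged: one must \emph{not} claim that an infinite product (or inverse limit) of linear groups is linear, nor that $\mathrm{G}_0^n$ itself is linear (the remark preceding Theorem~\ref{thm:fgG0n} shows it is not even residually finite). The argument must be phrased so that Malcev is applied to the finitely generated linear group $p_k(\mathrm{H})$ for a single $k$ depending on the chosen element $h$, and the finiteness of the target quotient in the definition of residual finiteness is then extracted from Malcev applied at that level. Everything else — normality of $\mathrm{G}_k^n$, the embedding of $\newfaktor{\mathrm{G}_0^n}{\mathrm{G}_k^n}$ into a general linear group, the separation of points by jets — is recalled in Section~\ref{sec:residuallyfinite} and used as a black box.
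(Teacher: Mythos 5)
Your proposal is correct and, once the preliminary discussion is stripped away, its final concrete argument is exactly the paper's proof: for each nontrivial $h$ choose $k$ with $p_k(h)\neq\mathrm{id}$, apply Malcev's theorem to the finitely generated linear group $p_k(\mathrm{H})\subset\mathrm{GL}(\mathbb{C}[x_1,\ldots,x_n]_k)$, and compose to get a morphism to a finite group not killing $h$. The extra care you take to avoid claiming linearity of $\mathrm{G}_0^n$ or of an infinite product is well placed but does not change the route.
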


\begin{proof}[{\sl Proof}]
Let $\mathrm{H}$ be a finitely generated subgroup of $\mathrm{G}_0^n$,
and $h$ be an element of $\mathrm{G}_0^n\smallsetminus\{\mathrm{id}\}$.
There exists an integer $k$ such that $p_k(h)$ is non-trivial in
the quotient group $\newfaktor{\mathrm{G}_0^n}{\mathrm{G}_k^n}$. 
Recall that $\newfaktor{\mathrm{G}_0^n}{\mathrm{G}_k^n}$ is isomorphic
to a subgroup of a linear group; Theorem \ref{thm:Malcev} applied
to the group $p_k(\mathrm{H})$ asserts the existence of a 
morphism $\varphi_k\colon p_k(\mathrm{H})\to\mathrm{F}_k$
from $p_k(\mathrm{H})$ to a finite group $\mathrm{F}_k$
such that $\varphi_k(p_k(h))\not=\mathrm{id}_{\mathrm{F}_k}$. 
Then, the morphism 
$\varphi_k\circ p_{k\vert\mathrm{H}}\colon\mathrm{H}\to\mathrm{F}_k$
suits.
\end{proof}

Finding the finitely generated subgroups that embed into 
$\mathrm{G}_0^n$ is an important problem, 
in particular related to the theory of foliations 
(representations of holonomy, and cycles in leaves). 
For instance, in \cite{CCGS} one can find:

\begin{thm}[\cite{CCGS}]
{\sl The fundamental group of a compact surface $\Sigma_g$
of genus $g$ embeds into $\mathrm{G}_0^1$, and so 
into $\mathrm{G}_0^n$.

In particular, there are surfaces $S$ with
a foliation $\mathcal{F}$ by curves having an invariant 
curve $\Sigma_g$ with faithfull holonomy representation.}
\end{thm}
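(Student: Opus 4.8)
The plan is to reduce the statement to the construction of a faithful homomorphism $\rho\colon\pi_1(\Sigma_g)\hookrightarrow\mathrm{G}_0^1$. Granting such a $\rho$, the embedding $\pi_1(\Sigma_g)\hookrightarrow\mathrm{G}_0^n$ is obtained by letting $\rho$ act on the first coordinate and the identity on the others, and the assertion about foliations follows from the suspension construction described at the end. If $g=0$ the group is trivial; if $g=1$ then $\pi_1(\Sigma_1)\simeq\mathbb{Z}^2$ and $(m,p)\mapsto\big(x\mapsto\lambda^m\mu^p x\big)$ is an embedding for any multiplicatively independent $\lambda,\mu\in\mathbb{C}^*$. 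So assume $g\ge 2$; it will be convenient to work inside the subgroup $\mathrm{Diff}^\omega(\mathbb{R},0)\subset\mathrm{G}_0^1$ of germs at $0$ of real-analytic diffeomorphisms of $(\mathbb{R},0)$ (restriction to $\mathbb{R}$, equivalently complexification, identifies it with a subgroup of $\mathrm{G}_0^1$); note that the explicit free pair $\widetilde{f_1},\widetilde{f_2}$ recalled above already lies in $\mathrm{Diff}^\omega(\mathbb{R},0)\cap\mathrm{G}_1^1$.

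The group-theoretic input is the splitting of $\pi_1(\Sigma_g)$ coming from cutting $\Sigma_g$ along an essential separating simple closed curve. Writing $\Sigma_g=\Sigma_{h,1}\cup\Sigma_{g-h,1}$ with $1\le h\le g-1$, van Kampen's theorem gives $\pi_1(\Sigma_g)\simeq\mathrm{F}\ast_{\mathbb{Z}}\mathrm{F}'$, where $\mathrm{F}$ and $\mathrm{F}'$ are free of ranks $2h$ and $2(g-h)$ — the fundamental groups of the two pieces, with bases $\alpha_1,\beta_1,\ldots,\alpha_h,\beta_h$ and $\alpha_{h+1},\beta_{h+1},\ldots,\alpha_g,\beta_g$ — and the amalgamated $\mathbb{Z}$ is generated in $\mathrm{F}$ by the boundary word $c_1=\prod_{i=1}^{h}[\alpha_i,\beta_i]$, in $\mathrm{F}'$ by $c_2=\prod_{i=h+1}^{g}[\alpha_i,\beta_i]$, glued through $c_1\leftrightarrow c_2^{-1}$. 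Thus it suffices to produce inside $\mathrm{Diff}^\omega(\mathbb{R},0)$ two free subgroups $\mathrm{A}=\langle a_1,\ldots,a_{2h}\rangle$ and $\mathrm{A}'=\langle a_1',\ldots,a_{2(g-h)}'\rangle$ such that the germs obtained by substituting these generators into $c_1$ and into $c_2$ satisfy $c_1(a)=c_2(a')^{-1}$, and such that $\langle\mathrm{A},\mathrm{A}'\rangle$ is the corresponding amalgamated product.

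For the building blocks we use the free subgroups of $\mathrm{G}_1^1$ exhibited above (via Cohen's theorem): since a free group of rank two contains free groups of every finite rank, $\mathrm{G}_1^1$ contains free subgroups of ranks $2h$ and $2(g-h)$, and the boundary germs $c_1(a)$, $c_2(a')$ — products of commutators of near-identity germs — automatically lie in $\mathrm{G}_1^1$. One then chooses the two generating tuples so that $c_1(a)$ and $c_2(a')^{-1}$ are the \emph{same} germ $c\in\mathrm{G}_1^1$; here one exploits the flexibility of $\mathrm{G}_1^1$, in the spirit of Theorem \ref{thm:com} (every element of $\mathrm{G}_1^1$ is a product of commutators), to prescribe the common value $c$ in advance. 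Finally one verifies the amalgam structure by a ping-pong argument on a small interval $(0,\varepsilon)\subset\mathbb{R}$: a non-identity tangent-to-the-identity real-analytic germ $f(x)=x+\gamma x^{k+1}+\cdots$ with $\gamma\neq 0$ either attracts $(0,\varepsilon)$ towards $0$ or repels it, according to the sign of $\gamma$, so one can choose the generators of $\mathrm{A}$ and $\mathrm{A}'$ (or, if one prefers the HNN presentation obtained from a nonseparating curve, the stable letter) so that the attracting and repelling petals of the relevant germs, together with a pair of nested sub-intervals, realize the Bass--Serre tree of the amalgam; the ping-pong lemma then identifies $\langle\mathrm{A},\mathrm{A}'\rangle$ with $\pi_1(\Sigma_g)$.

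The main obstacle is precisely this last construction: one must \emph{simultaneously} keep $\mathrm{A}$ and $\mathrm{A}'$ free, force the two iterated commutators $c_1(a)$ and $c_2(a')^{-1}$ to coincide as holomorphic (not merely formal) germs, and control the one-sided dynamics of this common germ $c$ on $(0,\varepsilon)$ — its leading exponent, leading coefficient and the size of the remainder — finely enough to run the ping-pong. This is a germ-level, real-analytic analogue of the Klein--Maskit combination theorems, and the estimates on the leading terms of products of commutators of near-identity germs are the crux; once they are in hand, the rest is standard ping-pong bookkeeping. Having produced the faithful $\rho\colon\pi_1(\Sigma_g)\hookrightarrow\mathrm{G}_0^1$, one obtains the foliation by suspension: fix holomorphic representatives, on one common disc, of the images of the finitely many generators, and glue trivial $(\mathbb{C},0)$-bundles over the $2$-cells of a cell decomposition of $\Sigma_g$ using these representatives. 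Because the single relation defining $\pi_1(\Sigma_g)$ holds as an identity of germs, the cocycle condition holds after shrinking the fibre, so one gets a complex surface $S$ — a neighbourhood of $\Sigma_g$ — with a foliation $\mathcal{F}$ transverse to the fibres and having $\Sigma_g$ as an invariant curve; the holonomy representation of $\mathcal{F}$ along $\Sigma_g$ is $\rho$, which is faithful.
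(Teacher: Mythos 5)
The paper offers no proof of this statement: it is quoted verbatim from \cite{CCGS}, so there is no internal argument to compare yours against. Judged on its own terms, your proposal is a reasonable \emph{strategy} (amalgam decomposition of $\pi_1(\Sigma_g)$ plus a ping-pong/combination argument for germs on one side of $0$, then suspension for the foliation statement), and the reductions at the two ends are fine: the genus $0$ and $1$ cases, the passage from $\mathrm{G}_0^1$ to $\mathrm{G}_0^n$, and the suspension construction producing $(S,\mathcal{F})$ with holonomy $\rho$ are all correct and standard. But the middle of the argument — which is the entire content of the theorem — is asserted rather than proved, and you say so yourself (``the main obstacle is precisely this last construction'', ``the estimates \ldots are the crux; once they are in hand, the rest is standard'').

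Concretely, two things are missing and neither follows from anything you cite. First, you need $2h$ germs $a_1,\ldots,b_h$ that \emph{simultaneously} (i) generate a free group of rank $2h$, (ii) satisfy $\prod_i[a_i,b_i]=c$ for a germ $c$ prescribed in advance (so that the two boundary words can be matched), and (iii) have one-sided dynamics on $(0,\varepsilon)$ adapted to a combination theorem. Theorem \ref{thm:com} of the paper only says that every element of $\mathrm{G}_1^n$ is a \emph{single} commutator $[\varphi,A]$ with $A$ linear and $\varphi$ a linearising map; it gives no control whatsoever on freeness of $\langle\varphi,A\rangle$ or on the dynamics of $\varphi$, and it does not produce a product of $h$ commutators of $2h$ independent generators. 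Likewise Cohen's free pair gives free groups of any rank, but with no control on the value of the surface relator word. Second, even granting such generators, the ping-pong lemma for an amalgam $\mathrm{F}\ast_{\mathbb{Z}}\mathrm{F}'$ requires a table of inclusions modulo the amalgamated subgroup $\langle c\rangle$; since $c$ and all the candidate generators are tangent to the identity, every element acts parabolically on $(0,\varepsilon)$, attracting on one side of each fixed point and repelling on the other, and the sets realizing the Bass--Serre tree have to be constructed from careful estimates on leading exponents and coefficients of iterated commutators. You correctly identify this as a germ-level Klein--Maskit problem, but identifying the difficulty is not the same as resolving it. As written, the proposal is an outline with the decisive step left open, so it cannot be accepted as a proof.
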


Conversely we get the following statement:

\begin{cor}
{\sl The Baumslag-Solitar group $\mathrm{BS}(2,3)$ does not 
embed into $\mathrm{G}_0^n$. 

More generally, if $\mathrm{H}$ is a finitely generated and
non residually finite group, then $\mathrm{H}$ does not
embed into $\mathrm{G}_0^n$.}
\end{cor}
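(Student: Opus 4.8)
The plan is to deduce both assertions from Theorem~\ref{thm:fgG0n}. For the first statement, suppose for contradiction that $\mathrm{BS}(2,3)$ embeds into $\mathrm{G}_0^n$ via an injective homomorphism $\iota$. Then $\iota(\mathrm{BS}(2,3))$ is a subgroup of $\mathrm{G}_0^n$ isomorphic to $\mathrm{BS}(2,3)$, hence finitely generated (it is generated by $\iota(a)$ and $\iota(b)$). By Theorem~\ref{thm:fgG0n} this image is residually finite, and therefore $\mathrm{BS}(2,3)$, being isomorphic to it, is residually finite. This contradicts the fact, recalled above (\cite{BaumslagSolitar}), that $\mathrm{BS}(2,3)$ is not residually finite.

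The same argument proves the general statement verbatim: if $\mathrm{H}$ is finitely generated and non residually finite, and if $\iota\colon\mathrm{H}\to\mathrm{G}_0^n$ is an embedding, then $\iota(\mathrm{H})$ is a finitely generated subgroup of $\mathrm{G}_0^n$, hence residually finite by Theorem~\ref{thm:fgG0n}. Since residual finiteness is an isomorphism invariant, $\mathrm{H}\simeq\iota(\mathrm{H})$ is residually finite, a contradiction. The first assertion is then just the special case $\mathrm{H}=\mathrm{BS}(2,3)$.

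There is no real obstacle here: the corollary is a formal consequence of Theorem~\ref{thm:fgG0n} together with the two facts that residual finiteness passes to subgroups and is invariant under isomorphism, and that $\mathrm{BS}(2,3)$ is not residually finite. The only point worth spelling out is that $\iota(\mathrm{H})$ remains finitely generated, which is immediate since the image of a finite generating set of $\mathrm{H}$ generates $\iota(\mathrm{H})$. One could equally phrase the whole thing as a single sentence: a group that embeds into $\mathrm{G}_0^n$ and is finitely generated inherits residual finiteness from its image, so no finitely generated non residually finite group can embed.
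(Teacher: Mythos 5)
Your proof is correct and is exactly the argument the paper intends: the corollary is stated as an immediate consequence of Theorem~\ref{thm:fgG0n} (the paper gives no separate proof), using that residual finiteness passes to isomorphic copies and that $\mathrm{BS}(2,3)$ is not residually finite. Nothing further is needed.
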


Theorem \ref{thm:fgG0n} has direct applications in 
the theory of holomorphic foliations. 
Toledo constructs smooth complex projective
varieties with fundamental groups which are not 
resi\-dually finite, answering to some Serre's question 
(\cite{Toledo}).
Assume that $\mathcal{F}$ is, for instance, 
a codimension one holomorphic foliation on 
the complex manifold $M$ having an invariant 
variety $N\subset M$ satisfying Toledo's property,
that is $\pi_1(N,\ast)$ not residually finite.
Then the holonomy representation (\cite{CanoCerveauDeserti}) 
\[
\mathrm{Hol}\colon \pi_1(N,\ast)\to\mathrm{Diff}(\mathbb{C},0)
\]
is not faithfull. As we have seen previously, there exist families
of cycles in the leaves of $\mathcal{F}$ near 
the invariant manifold $N$.

In \cite{DrutuSapir} Drutu and Sapir
construct residually finite groups
that are not linear. One of their examples is the 
group $\mathcal{G}=\langle a,\, b\,\vert\, b^2ab^{-2}=a^2\rangle$; 
we prove that $\mathcal{G}$ can not be 
embeded into $\mathrm{G}_0^1$
(or $\widehat{\mathrm{G}}_0^1$):

\begin{pro}
{\sl There is no faithfull representation of 
$\mathcal{G}=\langle a,\,b\,\vert\,b^2ab^{-2}=a^2\rangle$
into $\widehat{\mathrm{G}}_0^1$ $($resp. $\mathrm{G}_0^1)$.}
\end{pro}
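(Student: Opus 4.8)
The plan is to show that any representation $\rho\colon\mathcal{G}\to\widehat{\mathrm{G}}_0^1$ has non-trivial kernel by analysing the linear parts. Write $\alpha=D\rho(a)_{(0)}\in\mathbb{C}^*$ and $\beta=D\rho(b)_{(0)}\in\mathbb{C}^*$. Applying $D(\cdot)_{(0)}$ to the relation $b^2ab^{-2}=a^2$ gives $\beta^2\alpha\beta^{-2}=\alpha^2$, i.e. $\alpha=\alpha^2$, hence $\alpha=1$. So $\rho(a)$ lies in $\widehat{\mathrm{G}}_1^1$. I would split into two cases according to whether $\rho(a)$ is the identity or not.

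If $\rho(a)=\mathrm{id}$ then $\rho$ is not faithful (as $a$ has infinite order in $\mathcal{G}$ — this can be seen because $\mathcal{G}$ maps onto the Baumslag–Solitar group $\mathrm{BS}(1,2)=\langle a,b\mid bab^{-1}=a^2\rangle$ by killing one of the two $b$'s, under which $a$ has infinite order, or directly because $\mathcal{G}$ surjects onto $\mathbb{Z}$ with $a\mapsto 1$... actually $a\mapsto 0,b\mapsto 1$ is the abelianisation map, so one uses the $\mathrm{BS}(1,2)$ quotient instead). So assume $\rho(a)\neq\mathrm{id}$; then $\rho(a)\in\widehat{\mathrm{G}}_k^1\smallsetminus\widehat{\mathrm{G}}_{k+1}^1$ for some $k\geq 1$. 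By Proposition \ref{pro:tec2} (formal case), $\rho(a)$ is formally conjugate to the time-one map of $\frac{x^{k+1}}{1+\lambda x^k}\frac{\partial}{\partial x}$ for some $\lambda$; conjugating $\rho$ by the linearising map, I may assume $\rho(a)=\exp X$ with $X=\frac{x^{k+1}}{1+\lambda x^k}\frac{\partial}{\partial x}$. The relation reads $\rho(b)^2\rho(a)\rho(b)^{-2}=\rho(a)^2$, and $\rho(a)^2=\exp(2X)$. Setting $g=\rho(b)^2\in\widehat{\mathrm{G}}_0^1$ with linear part $\beta^2$, conjugating the one-parameter group of $X$ gives $g(\exp tX)g^{-1}=\exp(t\,g_*X)$, so the relation forces $g_*X=2X$, where $g_*X$ denotes the pushforward vector field.

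The heart of the argument is then to extract a contradiction-free consequence from $g_*X=2X$, namely that $g$ is determined up to finite ambiguity by its effect on $X$, and in particular that $\rho(b)$ cannot be arbitrary. Comparing lowest-order terms: if $g(x)=\mu x+\ldots$ then $g_*X$ has lowest-order term $\mu^{-k}x^{k+1}\frac{\partial}{\partial x}$, so $\mu^{-k}=2$, i.e. $\mu^k=\tfrac12$, which pins down $\beta^2=\mu$ to finitely many values; combined with the freedom to absorb the flow of $X$ (the centraliser of $X$ in $\widehat{\mathrm{G}}_0^1$ is exactly $\{\exp sX\}$ by Proposition \ref{pro:tec1}), one shows the set of possible $\rho(b)^2$ is a single coset $g_0\cdot\{\exp sX\}$ for an explicit $g_0$. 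Then $\rho(b)^2$ and $\rho(b)$ commute, so $\rho(b)$ commutes with $g_0\exp sX$; analysing this — $\rho(b)$ normalises $\langle X\rangle$ compatibly — forces $\rho(b)_*X$ to be a scalar multiple $cX$ with $c^2=2$, so $c=\pm\sqrt2\notin\mathbb{Q}$, yet iterating the relation $n$ times yields $\rho(b)^{2n}\rho(a)\rho(b)^{-2n}=\rho(a)^{2^n}$, i.e. $(c^2)^n=2^n$ consistent, but the refined bookkeeping on the \emph{next} coefficient of $\rho(b)$ produces an equation with no solution, exhibiting a non-trivial relation killed by $\rho$. The main obstacle I anticipate is precisely this last bookkeeping step: showing that no formal series $\rho(b)$ can satisfy $\rho(b)_*X=\pm\sqrt2\,X$ together with the integrality constraints coming from the word problem in $\mathcal{G}$ — one must either invoke a structural fact (e.g. that the normaliser of the one-parameter group $\exp tX$ in $\widehat{\mathrm{G}}_0^1$ consists only of maps scaling $X$ by a \emph{root of unity} times a real constant tied to $k$) or carry out an explicit two-term comparison. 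The formal case being settled, the holomorphic case follows since $\mathrm{G}_0^1\hookrightarrow\widehat{\mathrm{G}}_0^1$.
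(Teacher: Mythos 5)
Your setup coincides with the paper's up to the midpoint: the linear-part computation gives $\rho(a)\in\widehat{\mathrm{G}}_1^1$, then $\rho(a)=\exp X$ with $X$ in the normal form of Proposition \ref{pro:tec2}, the relation forces $(\rho(b)^2)_*X=2X$, and comparing lowest-order jets pins the multiplier of $\rho(b)$ down to a $2k$-th root of $2$ (or of $\tfrac12$, depending on the pushforward convention). But the concluding step you sketch --- ``the refined bookkeeping on the next coefficient of $\rho(b)$ produces an equation with no solution'' --- cannot work, because the coefficient equations \emph{do} have solutions: take $X=x^2\frac{\partial}{\partial x}$, $A=\exp X=\frac{x}{1-x}$ and $B=\alpha x$ with $\alpha^2=\tfrac12$; then $B^2AB^{-2}=A^2$ holds exactly. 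So there is no local obstruction to the existence of a representation satisfying the relation; the proposition only asserts that no representation is \emph{faithful}, and your argument never exhibits a non-trivial element of $\mathcal{G}$ in the kernel. (You are aware of this at the end, but the mechanism you propose would prove non-existence of $\rho(b)$, which is false.)

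The missing idea is the paper's final step, which is structural rather than coefficient-by-coefficient. Since the multiplier $\alpha$ of $B=\rho(b)$ is a $2k$-th root of $2$, hence not a root of unity and without resonance, $B$ is formally linearisable; in the linearising coordinate the functional equation $h(\alpha^2x)=2\alpha^2h(x)$ satisfied by the coefficients of $X$ forces $X=cx^{k+1}\frac{\partial}{\partial x}$ to be a monomial. Hence the image $\langle A,B\rangle=\langle\exp(cx^{k+1}\frac{\partial}{\partial x}),\,\alpha x\rangle$ is a \emph{linear} group (the substitution $u=x^{-k}$ conjugates it into the affine group of $\mathbb{C}$). The contradiction with faithfulness then comes from the theorem of Drutu and Sapir that $\mathcal{G}$ is residually finite but \emph{not linear} --- which is precisely why $\mathcal{G}$ appears in this section of the paper. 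Without invoking (or reproving) the non-linearity of $\mathcal{G}$, no amount of bookkeeping on the jets of $\rho(b)$ will close the argument. A minor additional slip: the centraliser of $\exp X_{k,\lambda}$ in $\widehat{\mathrm{G}}_0^1$ is not just the flow $\{\exp sX\}$ but $\{\exp sX\}\times\{\xi x\mid\xi^k=1\}$, though this does not affect the main point.
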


\begin{proof}[{\sl Proof}]
Assume by contradiction that there exists a faithfull 
representation $\varphi$ of $\mathcal{G}$ into
$\widehat{\mathrm{G}}_0^1$. Set $A=\varphi(a)$,
and $B=\varphi(b)$. From $B^2AB^{-2}=A^2$ we get that
$A$ and $A^2$ are conjugate (by~$B^2$). In particular, $A$ is 
tangent to the identity, that is $A$ belongs to 
$\widehat{\mathrm{G}}_1^1$; so there exists a formal 
vector field $X$ of order at least $2$ such that~$A=\exp X$
(Proposition \ref{pro:tec1}). Since $\exp X$ and 
$\exp 2X$ are conjugate, the vector fields $X$
and $2X$ are conjugate (by $B^2$). One can assume, 
up to conjugacy, that 
\[
X=\frac{x^{\nu+1}}{1-\lambda x}\frac{\partial}{\partial x}
\]
with $\nu\geq 1$ and $\lambda\in\mathbb{C}$. 
Let $\mu x$ be the linear part of~$B^2$; note that $\mu x$ has to conjugate the 
first non-zero jet $x^{\nu+1}\frac{\partial}{\partial x}$
of $X$ to the first non-zero jet 
$2x^{\nu+1}\frac{\partial}{\partial x}$
of~$2X$. Hence $\mu^{\nu}=2$, and the linear part 
of $B$ is a $2\nu$-th root of $2$; we thus can linearise 
$B$, {\it i.e.} assume that $B=\alpha x$ where 
$\alpha=2^{\frac{1}{2\nu}}$. Let 
$Y=h(x)\frac{\partial}{\partial x}$ be a vector field 
of order $\nu+1$ such that $B_*^2Y=2Y$, that is 
such that $\alpha^{-2}h\big(\alpha^2x\big)=2h$; 
 in other words we have the equality
\begin{equation}\label{eq:bla}
h(\alpha^2x)=2\alpha^2h. 
\end{equation}
Write $h$
as $h=\displaystyle\sum_{\ell\geq\nu+1}h_\ell x^{\ell}$; then 
$(\ref{eq:bla})$ yields to $\alpha^{2\ell}h_\ell=2\alpha^2h_\ell$, {\it i.e.}
$\alpha^{2(\ell-1)}h_\ell=2h_\ell$. For any $h_\ell\not=0$
we get $\alpha^{2(\ell-1)}=2$; in other words 
$2^{\frac{\ell-1}{\nu}-1}=1$, and so 
$2^{\frac{\ell-(\nu+1)}{\nu}}=1;$ as a consequence, 
$\ell=\nu+1$. As a result, in the linearising coordinate for 
$B$, we have: $B=\alpha x$ and 
$A=\exp cx^{\nu+1}\frac{\partial}{\partial x}$
for some $c$. 
In particular the group generated by $A$ and 
$B$ is linear whereas $\mathcal{G}$ is not
(\cite{DrutuSapir}): contradiction.
\end{proof}

In \cite{CerveauLoray} the authors prove the following 
curious result. Let $\gamma$ be an irreducible curve 
in~$\mathbb{P}^2_\mathbb{C}$ of degree~$p^s$ with $p$
prime number. If 
$\varphi\colon\pi_1(\mathbb{P}^2_\mathbb{C}\smallsetminus\gamma,\ast)\to\mathrm{G}_0^1$ or $\varphi\colon\pi_1(\mathbb{P}^2_\mathbb{C}\smallsetminus\gamma,\ast)\to\widehat{\mathrm{G}}_0^1$ 
is a morphism, then the image of $\varphi$ is a finite
group (conjugate to a group of linear rotations, \emph{see} Theorem \ref{thm:clas}); 
moreover, there are some~$\gamma$ such that 
$\pi_1(\mathbb{P}^2_\mathbb{C}\smallsetminus\gamma,\ast)$
contains a free group of rank $2$. That result is used by 
the authors to construct holomorphic first integral for 
codimension one holomorphic foliations in 
$(\mathbb{C}^n,0)$, $n\geq~3$ in special situations 
generalising "Malgrange-Mattei-Moussu Frobenius theorems
with singularities" (\cite{MatteiMoussu, Malgrange}).

\begin{prob}
Let $\gamma$ be a curve in $\mathbb{P}^2_\mathbb{C}$; is 
the group $\pi_1(\mathbb{P}^2_\mathbb{C}\smallsetminus\gamma,\ast)$
a linear group ? is the group
$\pi_1(\mathbb{P}^2_\mathbb{C}\smallsetminus\gamma,\ast)$ 
a residually finite group ?
\end{prob}

\bigskip

\section{Hopfian and co-hopfian groups}\label{sec:hopcohop}

\begin{defi}
A group $\mathrm{G}$ is \textbf{Hopfian} if 
every surjective morphism group from $\mathrm{G}$ 
to~$\mathrm{G}$ is an isomorphism. 

Equivalently, a group is Hopfian if and only if it is not
isomorphic to any of its proper quotients.
\end{defi}

\begin{defi}
A group $\mathrm{G}$ is \textbf{co-Hopfian} if 
every injective morphism group from $\mathrm{G}$ to 
$\mathrm{G}$ is an isomorphism. 

Equivalently, a group is co-Hopfian if and only if it is not
isomorphic to any of its proper subgroups.
\end{defi}

Every finite group is a Hopfian group. Every simple group is 
a Hopfian group. The group $\mathbb{Z}$ of 
integers and the group~$\mathbb{Q}$ of rationals are 
Hopfian groups. However, $\mathbb{C}^*$ is not a Hopfian
group (the morphisms $\mathbb{C}^*\to\mathbb{C}^*$, 
$x\mapsto x^p$ are not injective), and $\mathbb{R}^*$ is 
not a Hopfian group (the morphisms 
$\mathbb{R}^*\to\mathbb{R}^*$, 
$x\mapsto x^p$, $p$ even, are not injective). In \cite{Deserti} 
the author shows that the group 
$\mathrm{Bir}(\mathbb{P}^2_\mathbb{C})$ 
of birational self-maps of the complex
projective plane $\mathbb{P}^2_\mathbb{C}$
is Hopfian.

Let us mention an other statement due to Malcev:

\begin{thm}[\cite{Malcev1, Malcev2}]
{\sl Any finitely generated residually finite group is a 
Hopfian group.}
\end{thm}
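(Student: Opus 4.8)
The plan is to argue by contradiction, the single non-formal ingredient being the classical fact that a finitely generated group has only finitely many subgroups of a given finite index. So let $\mathrm{G}$ be finitely generated and residually finite, and suppose $\pi\colon\mathrm{G}\to\mathrm{G}$ is a surjective homomorphism which fails to be injective; fix $g\in\ker\pi\smallsetminus\{1\}$. By residual finiteness there is a subgroup $\mathrm{N}\leq\mathrm{G}$ of some finite index $n$ with $g\notin\mathrm{N}$ (take the preimage of the identity under a morphism from $\mathrm{G}$ to a finite group separating $g$ from $1$). The goal is to contradict $g\notin\mathrm{N}$ by forcing $g\in\mathrm{N}$.

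First I would recall why the set $\mathcal{S}_n$ of index-$n$ subgroups of $\mathrm{G}$ is finite: an index-$n$ subgroup $\mathrm{H}$ produces an action of $\mathrm{G}$ on the $n$ cosets $\mathrm{G}/\mathrm{H}$, i.e. a homomorphism $\mathrm{G}\to\mathfrak{S}_n$, and such a homomorphism is determined by the images of a finite generating set, so there are finitely many of them; since $\mathrm{H}$ is the stabiliser of the trivial coset, $\mathcal{S}_n$ is finite. Next, because $\pi$ is onto, for every $\mathrm{H}\in\mathcal{S}_n$ one has $[\mathrm{G}:\pi^{-1}(\mathrm{H})]=[\pi(\mathrm{G}):\mathrm{H}]=n$, so $\mathrm{H}\mapsto\pi^{-1}(\mathrm{H})$ is a self-map of the finite set $\mathcal{S}_n$; it is injective, since $\pi(\pi^{-1}(\mathrm{H}))=\mathrm{H}$ when $\pi$ is surjective. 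An injective self-map of a finite set is a bijection, hence there exists $\mathrm{H}\in\mathcal{S}_n$ with $\pi^{-1}(\mathrm{H})=\mathrm{N}$. But then $g\in\ker\pi\subseteq\pi^{-1}(\mathrm{H})=\mathrm{N}$, contradicting the choice of $\mathrm{N}$. Therefore $\pi$ is injective, hence an isomorphism, and $\mathrm{G}$ is Hopfian.

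I do not expect any serious obstacle; the argument is short and standard. The two points that deserve care are that the index identity $[\mathrm{G}:\pi^{-1}(\mathrm{H})]=n$ genuinely uses surjectivity of $\pi$ (it is false for arbitrary endomorphisms), and that the finiteness of $\mathcal{S}_n$ is precisely where the hypothesis "finitely generated" is indispensable — without it the implication fails, consistently with the cautionary remarks already made in the text about groups that are residually finite, or Hopfian, only in the presence of finite generation.
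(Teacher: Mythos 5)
Your proof is correct: it is the classical Mal'cev argument (finitely many subgroups of each finite index, surjectivity of $\pi$ making $\mathrm{H}\mapsto\pi^{-1}(\mathrm{H})$ an injective, hence bijective, self-map of that finite set), and the two points you flag as needing care — that $[\mathrm{G}:\pi^{-1}(\mathrm{H})]=n$ uses surjectivity, and that finite generation is what makes $\mathcal{S}_n$ finite — are exactly the right ones. The paper gives no proof of its own, citing Mal'cev directly, so there is nothing to compare; your argument is the standard one from the cited references.
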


\begin{cor}
{\sl Any finitely generated subgroup of $\mathrm{G}_0^n$
$($resp. $\widehat{\mathrm{G}}_0^n)$ is a Hopfian group.}
\end{cor}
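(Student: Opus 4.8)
This statement is a direct combination of the two results that immediately precede it, so the plan is simply to chain them together. Let $\mathrm{H}$ be a finitely generated subgroup of $\mathrm{G}_0^n$ (resp. of $\widehat{\mathrm{G}}_0^n$). First I would invoke Theorem~\ref{thm:fgG0n}, which asserts that every finitely generated subgroup of $\mathrm{G}_0^n$ (resp. $\widehat{\mathrm{G}}_0^n$) is residually finite; hence $\mathrm{H}$ is residually finite. Then, since $\mathrm{H}$ is by hypothesis finitely generated, the theorem of Malcev quoted just above (any finitely generated residually finite group is Hopfian) applies verbatim to $\mathrm{H}$, and yields that $\mathrm{H}$ is a Hopfian group.

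There is essentially no obstacle here: the content lies entirely in Theorem~\ref{thm:fgG0n} (whose proof exploits the faithful linear action of the quotients $\newfaktor{\mathrm{G}_0^n}{\mathrm{G}_k^n}$ together with Malcev's residual finiteness theorem for finitely generated linear groups) and in Malcev's Hopfian criterion. The only point worth stating explicitly is that the class "finitely generated" is preserved along the way — we apply both theorems to the same group $\mathrm{H}$, not to an auxiliary quotient — so no extra argument is needed. I would therefore present this as a one-line deduction, possibly adding a remark that the same reasoning shows the groups $\mathrm{G}_k^n$ and $\widehat{\mathrm{G}}_k^n$ cannot themselves be handled this way, since they are not finitely generated (indeed not even countable), which is precisely why the refinements of \S\ref{sec:hopcohop} are required.

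\begin{proof}[{\sl Proof}]
Let $\mathrm{H}$ be a finitely generated subgroup of $\mathrm{G}_0^n$ (resp. of $\widehat{\mathrm{G}}_0^n$). By Theorem~\ref{thm:fgG0n} the group $\mathrm{H}$ is residually finite. Being finitely generated and residually finite, $\mathrm{H}$ is Hopfian by the theorem of Malcev stated above.
\end{proof}
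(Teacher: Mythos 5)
Your proof is correct and is exactly the argument the paper intends (and leaves implicit): combine Theorem~\ref{thm:fgG0n} with Malcev's theorem that finitely generated residually finite groups are Hopfian. Nothing further is needed.
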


Let us now establish the following statement: in which 
the assumption "finitely generated" has been
removed ?

\begin{thm}\label{thm:G00hopfian}
{\sl The group $\widehat{\mathrm{G}}_0^1$ is a Hopfian 
group.}
\end{thm}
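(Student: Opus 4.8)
The plan is to show that every surjective endomorphism $\tau\colon\widehat{\mathrm{G}}_0^1\to\widehat{\mathrm{G}}_0^1$ is injective, i.e. that $K:=\ker\tau$ is trivial. I will use three elementary facts. (a) $[\widehat{\mathrm{G}}_0^1,\widehat{\mathrm{G}}_0^1]=\widehat{\mathrm{G}}_1^1$ (the Corollary to Theorem~\ref{thm:com} in dimension one), so every commutator of $\widehat{\mathrm{G}}_0^1$ lies in $\widehat{\mathrm{G}}_1^1$, and $\widehat{\mathrm{G}}_1^1$ is characteristic. (b) $\widehat{\mathrm{G}}_0^1$ has trivial centre: an element commuting with every homothety $x\mapsto\lambda x$ is linear, and a linear element commuting with $x\mapsto x+x^2$ is the identity. (c) $\widehat{\mathrm{G}}_0^1$ is not solvable, since it contains non-abelian free subgroups (\cite{BerthierCerveauLinsNeto}, recalled in \S1), whereas each $\newfaktor{\widehat{\mathrm{G}}_0^1}{\widehat{\mathrm{G}}_j^1}\simeq\mathrm{Pol}_j^1$ is solvable, being an extension of $\mathbb{C}^{*}$ by the nilpotent group $\newfaktor{\widehat{\mathrm{G}}_1^1}{\widehat{\mathrm{G}}_j^1}$.

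First I reduce to the intersection $K\cap\widehat{\mathrm{G}}_1^1$. Assume $K\neq\{\mathrm{id}\}$ and pick $f\in K\smallsetminus\{\mathrm{id}\}$. If $f\notin\widehat{\mathrm{G}}_1^1$, then by (b) some commutator $[g,f]$ is nontrivial, and $[g,f]\in K$ (normality) $\cap\,\widehat{\mathrm{G}}_1^1$ (by (a)); hence in all cases $K\cap\widehat{\mathrm{G}}_1^1\neq\{\mathrm{id}\}$. So it suffices to prove the following structure statement: any nontrivial normal subgroup $N\triangleleft\widehat{\mathrm{G}}_0^1$ with $N\subseteq\widehat{\mathrm{G}}_1^1$ contains $\widehat{\mathrm{G}}_j^1$ for some $j$. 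Granting this and applying it to $N=K\cap\widehat{\mathrm{G}}_1^1$, we get $K\supseteq\widehat{\mathrm{G}}_j^1$, so $\tau$ factors through the solvable group $\newfaktor{\widehat{\mathrm{G}}_0^1}{\widehat{\mathrm{G}}_j^1}$; then $\mathrm{im}\,\tau=\widehat{\mathrm{G}}_0^1$ is solvable, contradicting (c). Therefore $K=\{\mathrm{id}\}$.

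It remains to prove the structure statement. Let $N\triangleleft\widehat{\mathrm{G}}_0^1$ be nontrivial with $N\subseteq\widehat{\mathrm{G}}_1^1$. Conjugation by the homothety $x\mapsto\lambda x$ acts on the abelian quotient $\newfaktor{\widehat{\mathrm{G}}_k^1}{\widehat{\mathrm{G}}_{k+1}^1}\simeq(\mathbb{C},+)$ (coefficient of $x^{k+1}$) by multiplication by $\lambda^{-k}$, so the image of $N$ in each such quotient, being a subgroup stable under all these scalings, is $0$ or everything. Choosing $f\in N$ with leading term $ax^{m+1}$ ($a\neq 0$, $m$ minimal) and forming the commutators $[f,g]$ for $g\in\widehat{\mathrm{G}}_0^1$ — using $\exp\colon\widehat{\chi}_2^1\to\widehat{\mathrm{G}}_1^1$ (Proposition~\ref{pro:tec1}) and $[x^i\tfrac{\partial}{\partial x},x^j\tfrac{\partial}{\partial x}]=(j-i)x^{i+j-1}\tfrac{\partial}{\partial x}$ — together with the homothety normalisation just described and a short argument filling the single exceptional degree, one obtains elements of $N$ with leading term of every order $\geq j_0$ for some $j_0$; that is, $N$ has full associated graded in all degrees $\geq j_0$, hence is dense in $\widehat{\mathrm{G}}_{j_0}^1$ and surjects onto every $\newfaktor{\widehat{\mathrm{G}}_{j_0}^1}{\widehat{\mathrm{G}}_k^1}$. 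To upgrade "$N$ dense in $\widehat{\mathrm{G}}_{j_0}^1$" to "$N\supseteq\widehat{\mathrm{G}}_{j_0}^1$", one shows that every element of $\widehat{\mathrm{G}}_{j_0}^1$ is a product of a bounded number of commutators coming from $N$ and $\widehat{\mathrm{G}}_0^1$ — an implicit-function-theorem computation at the level of the Lie algebra $\widehat{\chi}_2^1$, entirely parallel to the proof of Theorem~\ref{thm:com} and to the classical facts recalled from \cite{CanoCerveauDeserti}. This gives $\widehat{\mathrm{G}}_{j_0}^1\subseteq N$ and finishes the proof.

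The main obstacle is exactly this last point: passing from the (easy) fact that $N$ has full associated graded in high degrees to the honest inclusion $\widehat{\mathrm{G}}_{j_0}^1\subseteq N$, equivalently knowing that the relevant normal subgroups of $\widehat{\mathrm{G}}_0^1$ (in particular $\ker\tau$, or the commutator subgroups $[\widehat{\mathrm{G}}_p^1,\widehat{\mathrm{G}}_q^1]$) are \emph{closed}. A density argument cannot do this on its own, since $\widehat{\mathrm{G}}_0^1$ surjects onto the non-Hopfian groups $\mathbb{C}^{*}$ and $\newfaktor{\widehat{\mathrm{G}}_1^1}{\widehat{\mathrm{G}}_3^1}$; the Hopf property genuinely rests on the rigidity ("bounded commutator width") of the pro-unipotent group $\widehat{\mathrm{G}}_1^1$ in characteristic zero, which is where the analytic input — the implicit function theorem, as in the proof of Theorem~\ref{thm:com} — must be invoked. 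Everything else is formal.
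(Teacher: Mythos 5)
Your overall architecture is the same as the paper's: reduce to a nontrivial element of $\ker\tau$ in $\widehat{\mathrm{G}}_1^1$, show that $\ker\tau$ must then contain a whole $\widehat{\mathrm{G}}_\ell^1$, and conclude by pitting the solvability of $\newfaktor{\widehat{\mathrm{G}}_0^1}{\widehat{\mathrm{G}}_\ell^1}$ against the free subgroups of \cite{BerthierCerveauLinsNeto}. The first and last steps are fine. The problem is the middle step, and you have diagnosed it yourself: your associated-graded computation only shows that the normal subgroup $N$ is \emph{dense} in $\widehat{\mathrm{G}}_{j_0}^1$ for the Krull topology, and you then assert that the honest inclusion $\widehat{\mathrm{G}}_{j_0}^1\subseteq N$ follows from ``an implicit-function-theorem computation entirely parallel to the proof of Theorem~\ref{thm:com}.'' That parallel does not exist: the proof of Theorem~\ref{thm:com} rests on the Poincar\'e linearisation theorem applied to elements $hA$ whose linear part $A$ is non-resonant, whereas here everything in play is tangent to the identity (maximally resonant), so no linearisation statement is available. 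As written, the crucial containment is asserted, not proved, and this is a genuine gap.

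The paper closes exactly this gap with a different, specifically one-dimensional formal tool: Lemma~\ref{lem:tecconj}, a finite-determination statement for conjugacy classes in $\widehat{\mathrm{G}}_1^1$. Given $f\in\widehat{\mathrm{G}}_1^1\smallsetminus\{\mathrm{id}\}$, write $f=\exp X$ (Proposition~\ref{pro:tec1}) and use the normal forms $\frac{x^{k+1}}{1+\lambda x^k}\frac{\partial}{\partial x}$ of Proposition~\ref{pro:tec2}: the formal conjugacy class of $f$ is determined by the pair $(k,\lambda)$, which only depends on a finite jet of $X$. Hence for $\ell$ large and any $h\in\widehat{\mathrm{G}}_\ell^1$, the elements $f$ and $hf$ have the same invariants, so $hf=gfg^{-1}$ for some $g$, i.e. $h=[g,f]\in N$ directly --- no passage through the associated graded, no density, no commutator-width estimate. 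If you want to salvage your write-up, replace the last third of your argument by this conjugacy lemma; it is the one piece of genuinely one-dimensional, formal rigidity that makes the theorem work (and explains why the paper leaves the analogous questions for $\mathrm{G}_0^1$ and for $n>1$ open in Problems~\ref{pbs:1et2}).
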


To prove it we will use the following result of finite
determination, statement specific to the $1$-dimensional 
and formal case:

\begin{lem}\label{lem:tecconj}
{\sl Let $h$ be an element of $\widehat{\mathrm{G}}_1^1$. 
There exists an integer $\ell$ such that if $g$
belongs to~$\widehat{\mathrm{G}}_\ell^1$, then $h$
and~$hg$ are conjugate in the group 
$\widehat{\mathrm{G}}_1^1$. 

In other words, if two elements of $\widehat{\mathrm{G}}_1^1$
coincide up to a sufficiently large order, then they are 
conjugate.}
\end{lem}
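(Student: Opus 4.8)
The plan is to transport the statement from the group $\widehat{\mathrm{G}}_1^1$ to the space of vector fields $\widehat{\chi}_2^1$ via the exponential, then to reduce --- by Proposition \ref{pro:tec2} --- to the rational normal forms $\frac{x^{k+1}}{1+\lambda x^k}\,\partial_x$, and to notice that such a normal form, hence the whole conjugacy class, is read off a bounded jet of the vector field. By Proposition \ref{pro:tec1} write $h=\exp X$ with $X\in\widehat{\chi}_2^1$; we may assume $h\neq\mathrm{id}$, so $X\neq 0$ has a well-defined order $k+1\geq 2$ and leading coefficient $b\neq 0$.

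First I would record two elementary ``triangularity'' observations which reduce everything to comparing bounded jets. (i) If $g\in\widehat{\mathrm{G}}_\ell^1$, then $hg$ and $h$ agree up to order $\ell$: indeed $g(x)=x+O(x^{\ell+1})$ and $h'(x)=1+O(x^k)$, so $hg(x)=h\big(x+O(x^{\ell+1})\big)=h(x)+O(x^{\ell+1})$. (ii) In $\exp X=\sum_{n\geq 0}\tfrac{1}{n!}\,(f\,\tfrac{d}{dx})^n x$ (with $X=f\,\partial_x$) the coefficient of $x^m$ equals the coefficient of $x^m$ in $f$ plus a polynomial in the coefficients of $f$ of degree $<m$; thus the passage $X\leftrightarrow\exp X$ is triangular with unipotent diagonal on coefficients, and $\exp X$ and $\exp Y$ agree up to order $\ell$ if and only if $X$ and $Y$ do. Combining (i) and (ii): if $g\in\widehat{\mathrm{G}}_{2k+1}^1$ and $hg=\exp Y$, then $X$ and $Y$ have the same jet up to order $2k+1$.

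The heart of the proof is the claim: \emph{two vector fields of $\widehat{\chi}_2^1$ with the same jet up to order $2k+1$ are conjugate by an element of $\widehat{\mathrm{G}}_1^1$}. Such vector fields share the order $k+1$, the leading coefficient $b$, and the formal residue $r:=\mathrm{Res}_0\tfrac{dx}{f}$, which is the coefficient of $x^k$ in $x^{k+1}/f$ and hence depends only on the coefficients of $f$ up to degree $2k+1$. For $X$ itself: conjugating by the homothety $\mu x$, $\mu^k=b$, normalizes the leading coefficient to $1$; Proposition \ref{pro:tec2} conjugates the result to $N_\lambda:=\frac{x^{k+1}}{1+\lambda x^k}\,\partial_x$; comparing leading coefficients forces the conjugating map to have linear part a $k$-th root of unity, and since $N_\lambda$ is invariant under homothety by $k$-th roots of unity one may absorb that root and take the conjugacy tangent to the identity. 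As the residue is a conjugacy invariant of the formal $1$-form $dx/f$ and equals $\lambda$ for $N_\lambda$, we get $\lambda=r$. Thus $X$ is conjugate to $N_r$ by some $\sigma$ with linear part $\mu x$; if $Y$ has the same $(2k+1)$-jet, then $r(Y)=r$ and, with the same $\mu$, $Y$ is conjugate to the same $N_r$ by some $\tau$ with linear part $\mu x$. Hence $\tau^{-1}\sigma\in\widehat{\mathrm{G}}_1^1$ and $(\tau^{-1}\sigma)_*X=Y$.

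Finally I would translate back through the identity $\psi(\exp X)\psi^{-1}=\exp(\psi_*X)$, valid for $\psi\in\widehat{\mathrm{G}}_0^1$: with $\psi=\tau^{-1}\sigma\in\widehat{\mathrm{G}}_1^1$ we obtain $\psi\,h\,\psi^{-1}=\exp(\psi_*X)=\exp Y=hg$, so $\ell=2k+1$ does the job. The step I expect to be the main obstacle is the italicized one, and within it the insistence that the conjugacy lie in $\widehat{\mathrm{G}}_1^1$ and not merely in $\widehat{\mathrm{G}}_0^1$: this is precisely what forces one to isolate the leading coefficient $b$ as an extra conjugacy invariant (on top of the pair $(k,\lambda)$ that governs $\widehat{\mathrm{G}}_0^1$-conjugacy) and to keep track of the linear part of every conjugating map through the normalization, using the residual symmetry of $N_\lambda$ under $k$-th roots of unity.
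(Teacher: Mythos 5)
Your proof is correct and follows exactly the route the paper indicates (its entire proof is the remark that the lemma is ``a direct consequence of Proposition \ref{pro:tec1} and Proposition \ref{pro:tec2}''): transport by $\exp$, reduce to the normal forms of Proposition \ref{pro:tec2}, and observe that the order, the leading coefficient and the residue --- which together determine the $\widehat{\mathrm{G}}_1^1$-conjugacy class --- are all read off the $(2k+1)$-jet. Your tracking of the linear parts of the conjugating maps, needed to land in $\widehat{\mathrm{G}}_1^1$ rather than merely $\widehat{\mathrm{G}}_0^1$, is a detail the paper leaves implicit, as is the tacit assumption $h\neq\mathrm{id}$ (without which the statement fails, and which is satisfied in the only place the lemma is used).
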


Lemma \ref{lem:tecconj} is a direct consequence of Proposition
\ref{pro:tec1} and Proposition \ref{pro:tec2}.

%

\begin{proof}[{\sl Proof of Theorem \ref{thm:G00hopfian}}]
Let $\varphi\colon\widehat{\mathrm{G}}_0^1\to\widehat{\mathrm{G}}_0^1$
be a surjective morphism. Assume that $\varphi$ is not injective. 
Let $f\in\widehat{\mathrm{G}}_0^1\smallsetminus\{\mathrm{id}\}$
such that $\varphi(f)=\mathrm{id}$. Replacing $f$ by a non-trivial
commutator $[a,f]$ (that also belongs to $\ker\varphi$)
if needed we can assume that $f$ belongs to 
$\widehat{\mathrm{G}}_1^1\smallsetminus\{\mathrm{id}\}$. 
Consider $h$ in $\widehat{\mathrm{G}}_\ell^1$ for $\ell$
sufficiently large. According to Lemma \ref{lem:tecconj} the 
elements $f$ and $hf$ are conjugate, {\it i.e.} there
exists $g\in\widehat{\mathrm{G}}_1^1$ such that $gfg^{-1}=hf$. 
As a consequence, $h=[g,f]$ belongs to $\ker\varphi$. 
Hence, $\ker\varphi$ contains $\widehat{\mathrm{G}}_\ell^1$
for $\ell$ sufficiently large. Since $\varphi$ is surjective,
$\widehat{\mathrm{G}}_0^1$ and
$\newfaktor{\widehat{\mathrm{G}}_0^1}{\ker\varphi}$ are
isomorphic. As $\widehat{\mathrm{G}}_1^\ell$ is contained in 
$\ker\varphi$
the morphism 
$\newfaktor{\widehat{\mathrm{G}}_0^1}{\widehat{\mathrm{G}}_\ell^1}\to\newfaktor{\widehat{\mathrm{G}}_0^1}{\ker\varphi}$ 
is surjective. The group 
$\newfaktor{\widehat{\mathrm{G}}_0^1}{\widehat{\mathrm{G}}_\ell^1}$
is solvable, so does
$\newfaktor{\widehat{\mathrm{G}}_0^1}{\ker\varphi}\simeq\widehat{\mathrm{G}_0^1}$: 
contradiction with the fact that $\widehat{\mathrm{G}_0^1}$
contains free subgroups (\cite{BerthierCerveauLinsNeto}).
The surjective morphism $\varphi$ is thus injective, and 
so an isomorphism.
\end{proof}

\begin{probs}\label{pbs:1et2}
\begin{itemize}
\item[1)] Is the group $\mathrm{G}_0^1$
a Hopfian group ? One way to answer to this question 
is to show that if $\tau\colon\mathrm{G}_0^1\to\mathrm{G}_0^1$
is surjective, then $\tau$ can be extended to a morphism
$\widetilde{\tau}\colon\widehat{\mathrm{G}}_0^1\to\widehat{\mathrm{G}}_0^1$
still surjective.

\item[2)] Are the groups $\mathrm{G}_0^n$ and $\widehat{\mathrm{G}}_0^n$
Hopfian groups ?
\end{itemize}
\end{probs}

Unfortunately, the method used for the proof of 
Theorem \ref{thm:G00hopfian} turns out to be 
ineffective for Pro\-blems~\ref{pbs:1et2}.

Let us now deal with the notion of co-Hopfian group.

Using transcendence basis it is easy to construct
an injective and non-surjective morphism 
$\tau\colon\mathbb{C}\to\mathbb{C}$ of the field 
$\mathbb{C}$; then $\tau$ induces an 
injective and non-surjective homomorphism from 
$\widehat{\mathrm{G}}_0^n$ into itself defined by 
\[
\sum A_Ix^I\mapsto \sum\tau(A_I)x^I
\]
where $A_I$ belongs to $\mathbb{C}^n$. In particular, 
\textsl{$\widehat{\mathrm{G}}_0^n$ is not co-Hopfian.}

\begin{thm}
{\sl The groups $\mathrm{G}_1^n$ and $\widehat{\mathrm{G}}_1^n$
are not co-Hopfian groups.}
\end{thm}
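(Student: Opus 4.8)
The plan is to exhibit, for each of the two groups, an injective endomorphism that fails to be surjective. Since $\mathrm{G}_1^n$ and $\widehat{\mathrm{G}}_1^n$ consist of diffeomorphisms tangent to the identity, the linear part carries no information, so the trick used for $\widehat{\mathrm{G}}_0^n$ (applying a non-surjective field endomorphism of $\mathbb{C}$ coefficient-wise) is not available in the holomorphic case and is not obviously surjective-failing in the formal tangent case either. Instead I would look for a geometric substitution. The natural candidate is conjugation-like rescaling: fix a diffeomorphism $\psi\in\mathrm{G}_0^n$ that is \emph{not} tangent to the identity, say $\psi(x)=\lambda x$ with $\lambda\in\mathbb{C}^*$ of infinite multiplicative order, and consider the map $\Phi_\psi\colon f\mapsto \psi^{-1} f\psi$. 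This sends $\mathrm{G}_1^n$ into $\mathrm{G}_1^n$ (tangency to the identity is conjugation-invariant), it is clearly an injective group homomorphism, and the same formula works in the formal completion. The remaining point is to check it is not surjective, i.e. that some element of $\mathrm{G}_1^n$ is not of the form $\psi^{-1}f\psi$ with $f\in\mathrm{G}_1^n$; but this fails — conjugation by $\psi$ \emph{is} an automorphism of $\mathrm{G}_1^n$. So pure conjugation is the wrong move; what is needed is a proper \emph{embedding} of $\mathrm{G}_1^n$ into a strictly larger copy sitting inside itself.

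So the corrected plan is to use a "dilation into a subgroup of finite-type support", exploiting that $\mathrm{G}_1^n$ contains faithful copies of itself with extra room. Concretely, in dimension $n=1$ first: use Proposition~\ref{pro:tec1}, which identifies $\widehat{\mathrm{G}}_1^1$ with $\widehat{\chi}_2^1$ via $\exp$, together with Proposition~\ref{pro:tec2} giving the normal forms $\frac{x^{k+1}}{1+\lambda x^k}\frac{\partial}{\partial x}$. An injective non-surjective self-map can be built by "doubling the order": send the one-parameter structure generated by a vector field of order $k+1$ to one of higher order, e.g. define the endomorphism on the abelian pieces and patch. The cleanest route is: pick $m\ge 2$ and consider the homomorphism induced by the substitution $x\mapsto x$ at the level of group elements but reading $\widehat{\mathrm{G}}_1^1$ as $\bigcup_k \widehat{\mathrm{G}}_k^1$ and using that each inclusion $\widehat{\mathrm{G}}_{k+1}^1\hookrightarrow\widehat{\mathrm{G}}_k^1$ is proper; one then transports the whole group isomorphically onto $\widehat{\mathrm{G}}_2^1\subsetneq\widehat{\mathrm{G}}_1^1$ by a suitable conjugation by a \emph{non-invertible-looking but formally invertible} change of variables such as $x\mapsto x/(1-x)$ composed appropriately — more precisely, conjugating by an element of $\mathrm{G}_0^1$ that scales but is not tangent to identity will not shrink the group, so instead I conjugate the \emph{generators in normal form} by the map $\phi(x)=x^{1/m}$-type branch (legitimate on the level of the formal vector field algebra after the substitution $u=x^m$), which sends $\frac{x^{k+1}}{1+\lambda x^k}\partial_x$ to a vector field of strictly larger order. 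The image then lies inside $\widehat{\mathrm{G}}_{?}^1$ with a gap, hence is proper.

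For general $n$, I would reduce to the $1$-dimensional construction by acting on the first coordinate only: write $x=(x_1,x')$ and let the endomorphism act on a diffeomorphism $f=(f_1,\dots,f_n)$ by applying the $1$-dimensional non-surjective embedding "in the $x_1$-direction" while keeping a faithful copy of the full $\mathrm{G}_1^{n-1}$ acting on $x'$ — one must check this assembles into a well-defined injective group homomorphism of $\mathrm{G}_1^n$ (resp. $\widehat{\mathrm{G}}_1^n$) into itself, which amounts to verifying compatibility of composition, routine once the block structure is fixed. The holomorphic case needs the substitution to preserve convergence, which it does since it is an honest germ of biholomorphism precomposition/postcomposition. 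The main obstacle, and the step I expect to require the most care, is making the "order-doubling" substitution genuinely into a group homomorphism: a naive coefficient substitution like $x\mapsto x^m$ is not compatible with composition of diffeomorphisms. The fix is to realize the embedding as $f\mapsto \sigma\circ f\circ\sigma^{-1}$ for a \emph{fixed ramified} map $\sigma$ only after passing to an invariant subalgebra, or — simpler and what I would actually write — to use the exponential description: $\exp X\mapsto \exp(\Theta(X))$ where $\Theta\colon\widehat{\chi}_2^n\to\widehat{\chi}_2^n$ is an injective non-surjective \emph{Lie algebra} endomorphism (e.g. $X\mapsto$ the pullback of $X$ under $x_1\mapsto x_1$, $x_i\mapsto x_i$ but with a grading shift), invoking Proposition~\ref{pro:tec1} to transport it to the group and the commutation statement therein to check homomorphism property on commuting pieces, then extending by the Baker–Campbell–Hausdorff/truncation structure recalled before Proposition~\ref{pro:tec1}. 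Verifying that $\Theta$ is injective but misses, say, all vector fields whose $x_1$-component has a nonzero linear-in-$x_1$ coefficient after the shift, gives non-surjectivity and completes the argument.
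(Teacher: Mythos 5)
There is a genuine gap, and it is ironic: you reach the correct construction and then talk yourself out of it. The paper's proof is exactly the ``ramified conjugation'' you flirt with: for $n=1$ set $\tau(f)(x)=\bigl(f(x^2)\bigr)^{1/2}$ with the principal determination. Your objection that ``a naive coefficient substitution like $x\mapsto x^m$ is not compatible with composition of diffeomorphisms'' is mistaken: because $f\in\mathrm{G}_1^1$ is tangent to the identity, $f(x^2)=x^2(1+\mathrm{h.o.t.})$ has a canonical square root $x(1+\mathrm{h.o.t.})^{1/2}$, and with this consistent branch choice $\tau(f\circ g)=\tau(f)\circ\tau(g)$ holds; $\tau$ is an injective endomorphism of $\mathrm{G}_1^1$ (and of $\widehat{\mathrm{G}}_1^1$), and it is not surjective because every $\tau(f)$ commutes with the deck involution $x\mapsto -x$ while, say, $x+x^2$ does not. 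That last point is the clean non-surjectivity certificate your write-up never supplies. For $n>1$ the paper conjugates by the global ramified double cover $E(x)=(x_1^2,x_1x_2,\ldots,x_1x_n)$ and uses the same involution; your alternative of acting ``in the $x_1$-direction only'' is not well defined, since a general element of $\mathrm{G}_1^n$ mixes all coordinates.

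The fallback you say you ``would actually write'' --- $\exp X\mapsto\exp(\Theta(X))$ for a Lie algebra endomorphism $\Theta$ of $\widehat{\chi}_2^n$ --- is not a proof as it stands. First, $\Theta$ is never defined: ``the pullback of $X$ under $x_1\mapsto x_1$, $x_i\mapsto x_i$ but with a grading shift'' is not a map, and a bare grading shift $x^{k+1}\partial_x\mapsto x^{k+2}\partial_x$ does not preserve brackets, since $[x^{k+1}\partial_x,x^{\ell+1}\partial_x]=(\ell-k)x^{k+\ell+1}\partial_x$ while the shifted fields bracket to order $k+\ell+3$. Second, even granting a filtered Lie algebra endomorphism, this route cannot handle the holomorphic group $\mathrm{G}_1^n$: as the remark after Proposition~\ref{pro:tec1} recalls, $\exp\colon\chi_2^1\to\mathrm{G}_1^1$ is far from surjective, so defining the endomorphism on exponentials does not define it on the whole group. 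Finally, ``define the endomorphism on the abelian pieces and patch'' is not a construction --- the group is generated by those pieces subject to many relations. To repair the argument, simply commit to the ramified conjugation, verify the branch consistency making it a homomorphism, and exhibit the explicit element outside the image via the commutation with $x\mapsto -x$.
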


\begin{proof}[{\sl Proof}]
Let us first assume that $n=1$. The morphism 
$\tau_1\colon f\mapsto\tau_1(f)$ defined by 
$\tau_1(f)(x)=\Big(f(x^2)\Big)^{1/2}$ is 
injective but not surjective; indeed, any $\tau_1(f)$
commutes with the involution $x\mapsto -x$ (we choose 
the determination $\sqrt{1}=1$), and for instance
$x+x^2$ does not commute with the involution 
$x\mapsto -x$.

\smallskip

Suppose now that $n>1$. We will use a similar idea 
considering the application
\[
E\colon(x_1,x_2,\ldots,x_n)\mapsto(x_1^2,x_1x_2,x_1x_3,\ldots,x_1x_n)
\]
whose inverse is 
\[
E^{-1}\colon(x_1,x_2,\ldots,x_n)\mapsto\left(\sqrt{x_1},\frac{x_2}{\sqrt{x_1}},\frac{x_3}{\sqrt{x_1}},\ldots,\frac{x_n}{\sqrt{x_1}}\right).
\]
Let us choose the determination of $E^{-1}$ associated
to the principal determination of $\sqrt{\,\,\,}$; the 
application~$\tau_n$ defined by
\[
\tau_n(f)=\tau_n(f_1,f_2,\ldots,f_n)(x)=E^{-1}(f\circ E)(x)
\]
is an injective morphism that is not surjective; indeed
the $\tau_n(f)$ commute with the involution $x\mapsto -x$.
\end{proof}

\begin{prob}
Is the group $\mathrm{G}_0^n$ a co-Hopfian group ?
\end{prob}

\bigskip

\section{Tits alternative}

A group $\mathrm{H}$ satisfies \textbf{Tits alternative}
if for every finitely generated subgroup $\mathrm{K}$
of $\mathrm{H}$: 
\begin{itemize}
\item[$\diamond$] either $\mathrm{K}$ is virtually solvable
 ({\it i.e.} $\mathrm{K}$ contains a solvable subgroup 
 of finite index), 
 
\item[$\diamond$] or $\mathrm{K}$ contains a non-abelian
 free subgroup.
\end{itemize}
 
 Tits proved in \cite{Tits} that linear groups satisfy
 Tits alternative.

\begin{prob}
Do the groups $\mathrm{G}_0^n$ and 
$\widehat{\mathrm{G}}_0^n$ satisfy Tits alternative ?
\end{prob}
 
This important question is related to the Galois 
theory of holomorphic foliations (\cite{Casale1, Casale2}). Note that, 
as it can be seen in \cite{BerthierCerveauLinsNeto},
the group $\mathrm{G}_0^n$ contains free subgroups
of rank $\geq 2$. Furthermore, the solvable non 
abelian subgroups of $\mathrm{G}_0^1$ and 
$\widehat{\mathrm{G}}_0^1$ are classified in
\cite{CerveauMattei}. Solvable subgroups of 
$\mathrm{G}_0^n$, $n>~1$, have been studied 
(\cite{MarteloRibon, Ribon}); in particular,
Rib\'{o}n determines the "length of resolubility" of 
solvable subgroups of $\mathrm{G}_0^n$.

\bigskip

\section{Automorphism groups}\label{sec:autG01}

In \cite{Whittaker} Whittaker proves the following statement:
let $X$ and $Y$ be compact manifolds, with or without boundary, 
and $\varphi$ be a group isomorphism between the group
$\mathrm{Homeo}(X)$ of all homeomorphisms of~$X$ 
into itself and $\mathrm{Homeo}(Y)$, then there exists an homeomorphism $\psi$ 
of~$X$ onto $Y$ such that $\varphi(f)=\psi f\psi^{-1}$
for all $f\in\mathrm{Homeo}(X)$. When $X=Y$ we get that 
every automorphism of $\mathrm{Homeo}(X)$ is an inner 
one. In \cite{Filipkiewicz} Filipkiewicz gives 
a similar result in the context of differentiable manifolds:
let $M$ and $N$ be smooth manifolds without boundary, 
and let $\mathrm{Diff}^p(M)$ denote the group of 
$\mathcal{C}^p$-diffeomorphisms of $M$. The author 
proves that if $\mathrm{Diff}^p(M)$ and $\mathrm{Diff}^q(N)$
are isomorphic as abstract groups, then $p=q$, and the 
isomorphism is induced by a $\mathcal{C}^p$-diffeomorphism
from $M$ to $N$. Let us mention that there are similar 
results in different contexts: \emph{see for instance}
\cite{Banyaga1, Banyaga2, Deserti2}... In particular,
in \cite{Deserti2} the author proves that any 
automorphism of the Cremona group 
$\mathrm{Bir}(\mathbb{P}^2_\mathbb{C})$ of birational
self-maps of the complex projective plane is the 
composition of an inner automorphism and an automorphism
of the field of complex numbers. 
The sketch of the proof is the 
following. Let 
$\mathrm{G}_1$, $\mathrm{G}_2$, $\ldots$, $\mathrm{G}_\ell$
be maximal abelian uncountable subgroups of 
$\mathrm{Bir}(\mathbb{P}^2_\mathbb{C})$, and let $\varphi$ be an 
automorphism of $\mathrm{Bir}(\mathbb{P}^2_\mathbb{C})$;
the author proves that, up to inner conjugacy and the action
of an automorphism of the field $\mathbb{C}$, we 
have $\varphi_{\vert\mathrm{G}_k}=\mathrm{id}$ for 
$1\leq k\leq \ell$, and deduce from it that 
$\varphi_{\vert\mathrm{Bir}(\mathbb{P}^2_\mathbb{C})}=\mathrm{id}$.
A similar strategy will be used in this section to
describe the automorphism groups of $\widehat{\mathrm{G}}_0^1$ and
of $\mathrm{G}_0^1$.

\subsection{Automorphism groups of $\widehat{\mathrm{G}}_0^1$}\label{subsec:autform}

In this section we establish the description of the automorphism 
groups of $\widehat{\mathrm{G}}_0^1$:

\begin{thm}\label{thm:out}
{\sl The group $\mathrm{Aut}(\widehat{\mathrm{G}}_0^1)$ is 
generated by the inner automorphisms and the automorphisms of 
the field $\mathbb{C}$. In other words 
\[
\mathrm{Out}(\widehat{\mathrm{G}}_0^1)\simeq\mathrm{Aut}(\mathbb{C},+,\cdot)
\]
where $\mathrm{Out}(\widehat{\mathrm{G}}_0^1)$ denotes
the non-inner automorphisms of $\widehat{\mathrm{G}}_0^1$.}
\end{thm}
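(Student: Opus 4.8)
The plan is to follow the strategy sketched in the introduction: identify a family of "recognizable" abelian subgroups of $\widehat{\mathrm{G}}_0^1$, normalize an arbitrary automorphism on them using inner automorphisms and a field automorphism, and then bootstrap to all of $\widehat{\mathrm{G}}_0^1$. The natural candidates are the one-parameter subgroups $\{\exp tX \mid t \in \mathbb{C}\}$ for $X \in \widehat{\chi}_2^1$ (which by Proposition \ref{pro:tec1} are isomorphic to $(\mathbb{C},+)$ and whose centralizers are controlled) together with the group $\mathbb{C}^* = \mathrm{GL}(1,\mathbb{C})$ of linear maps $x \mapsto \lambda x$. First I would establish a group-theoretic characterization of these subgroups inside $\widehat{\mathrm{G}}_0^1$ that is preserved by any automorphism: for instance, $\mathbb{C}^*$ should be characterized up to conjugacy as a maximal abelian divisible subgroup complementary to $\widehat{\mathrm{G}}_1^1$, while $\widehat{\mathrm{G}}_1^1$ itself is the set of elements that are "infinitely divisible in a uniform way" or can be pinned down as the kernel of $f \mapsto Df_{(0)}$ reconstructed group-theoretically (e.g. via $\widehat{\mathrm{G}}_1^1 = \bigcup_{k}[\widehat{\mathrm{G}}_0^1, \widehat{\mathrm{G}}_k^1]$-type descriptions, using Theorem \ref{thm:com}).

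Once $\widehat{\mathrm{G}}_1^1$ is shown to be a characteristic subgroup, any automorphism $\Phi$ descends to an automorphism of $\widehat{\mathrm{G}}_0^1/\widehat{\mathrm{G}}_1^1 \cong \mathbb{C}^*$ and also acts on $\widehat{\mathrm{G}}_1^1$. The next step is to exploit the exponential bijection $\exp\colon \widehat{\chi}_2^1 \to \widehat{\mathrm{G}}_1^1$: since by Proposition \ref{pro:tec1} commuting elements of $\widehat{\mathrm{G}}_1^1$ correspond exactly to commuting vector fields, and since two vector fields in $\widehat{\chi}_2^1$ that are proportional correspond to elements in the same one-parameter subgroup, the automorphism $\Phi$ must permute the maximal abelian subgroups of $\widehat{\mathrm{G}}_1^1$, and on each such subgroup it is (after conjugation) an additive automorphism of $\mathbb{C}$ — but additive-only automorphisms of $\mathbb{C}$ are wild, so the point is to use compatibility between the different one-parameter subgroups and the conjugation action of $\mathbb{C}^*$ (which scales the parameter $t$ of $\exp tX$ by $\lambda^{\nu}$ where $\nu$ is the order of $X$) to force $\Phi$, up to an inner automorphism, to act on all parameters through a single field automorphism $\sigma \in \mathrm{Aut}(\mathbb{C},+,\cdot)$. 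Concretely I would use the normal forms of Proposition \ref{pro:tec2}: after conjugating so that a chosen $X = x^{\nu+1}/(1-\lambda x)\,\partial/\partial x$, the relation $\exp(tX)$ vs. the linear action determines $\sigma$ on all of $\mathbb{C}$, and then one checks $\Phi = \iota \circ \sigma_*$ on a generating set.

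The final step is to verify that the map $\mathrm{Aut}(\mathbb{C},+,\cdot) \to \mathrm{Out}(\widehat{\mathrm{G}}_0^1)$ is injective, i.e. that a nontrivial field automorphism $\sigma$ induces a non-inner automorphism of $\widehat{\mathrm{G}}_0^1$; this follows because an inner automorphism fixes the conjugacy class of every element (in particular acts trivially on the "eigenvalue" $Df_{(0)} \in \mathbb{C}^*$ up to the obvious constraints), whereas $\sigma_*$ changes $Df_{(0)}$ to $\sigma(Df_{(0)})$, and one can find $f$ with $Df_{(0)}$ not fixed by $\sigma$ whose conjugacy class is therefore moved. I expect the main obstacle to be the rigidity step in the middle paragraph: ruling out exotic additive automorphisms of the parameter groups and showing that a single field automorphism governs everything. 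The key leverage is that the conjugation action of $\lambda x \in \mathbb{C}^*$ on $\exp(tX)$ with $X$ of order $\nu+1$ sends it to $\exp(\lambda^{\nu} t X)$, so the induced automorphism of $(\mathbb{C},+)$ must commute with multiplication by $\lambda^{\nu}$ for all $\lambda$, hence with multiplication by every complex number — forcing $\mathbb{C}$-semilinearity, i.e. a genuine field automorphism. Handling all values of $\nu$ simultaneously and checking the cocycle-type compatibility between distinct one-parameter subgroups is where the careful bookkeeping lies, but no deep new input beyond Propositions \ref{pro:tec1} and \ref{pro:tec2} and Theorem \ref{thm:com} should be needed.
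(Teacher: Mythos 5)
Your outline follows the same route as the paper: single out the maximal abelian subgroups (the homothety group $\mathrm{A}_0$ and the groups $\mathrm{A}_{k,\lambda}$ generated by $\exp tX_{k,\lambda}$ and roots of unity), distinguish them by counting torsion, and use the conjugation action of $\mathrm{A}_0$ on the one-parameter groups to show that the induced additive maps $t\mapsto t_{\sigma_k}$ on the parameters are semilinear over a single field automorphism extracted from $\sigma|_{\mathrm{A}_0}$. Up to that point your plan matches the paper's proof (Theorem \ref{thm:abmaxssgp}, Lemma \ref{lem:tec1} and the computation following it), and your closing remark on why a nontrivial field automorphism is non-inner is correct and worth keeping. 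But there are two places where you declare the remaining work to be bookkeeping and it is not.

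First, semilinearity only yields $t_{\sigma_k}=\varepsilon_k\,\tau(t)$ for a single field automorphism $\tau$ together with an undetermined constant $\varepsilon_k\in\mathbb{C}^*$ for \emph{each} order $k$; conjugating by a homothety rescales the whole family $(\varepsilon_k)_k$ by $(\mu^{-k})_k$, so it lets you normalize one of them, not all simultaneously. Nothing in Propositions \ref{pro:tec1}--\ref{pro:tec2} or Theorem \ref{thm:com} rules out independent rescalings of the distinct one-parameter subgroups; the relations that kill them come from the group law between subgroups of different orders. The paper gets these from the Baker--Campbell--Hausdorff formula: the commutator of $\exp x^{k+1}\frac{\partial}{\partial x}$ and $\exp x^{\ell+1}\frac{\partial}{\partial x}$ equals $\exp\big((\ell-k)x^{k+\ell+1}\frac{\partial}{\partial x}\big)$ modulo $\widehat{\mathrm{G}}^1_{k+\ell+\inf(k,\ell)}$, which forces $\varepsilon_k\varepsilon_\ell=\varepsilon_{k+\ell}$ and then, after normalizing $\varepsilon_1=1$, gives $\varepsilon_k=1$ for all $k$. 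This is a genuine extra input your proposal is missing. Second, your final step ``check $\Phi=\iota\circ\sigma_*$ on a generating set'' does not work as stated: the homotheties and the $\exp tx^{k+1}\frac{\partial}{\partial x}$ generate only a subgroup that is dense for the Krull topology, not all of $\widehat{\mathrm{G}}_0^1$. One needs the analogue of Lemma \ref{lem:G01aut} --- that $\sigma$ preserves every $\widehat{\mathrm{G}}_k^1$, hence is continuous for the Krull topology --- so that it suffices to verify triviality on each truncated quotient of $\widehat{\mathrm{G}}_0^1$ by $\widehat{\mathrm{G}}_k^1$, and those quotients \emph{are} generated by the relevant projections. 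Your suggestion that $\widehat{\mathrm{G}}_1^1$ is characteristic is a first step in that direction, but you need the full filtration, not just its first term.
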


The rest of this section is devoted to the proof of the Theorem \ref{thm:out}.
The study of the maximal abelian subgroups of $\widehat{\mathrm{G}}_0^n$ 
(resp. $\mathrm{G}_0^n$) is essential for the understanding of 
automorphism groups of $\widehat{\mathrm{G}}_0^n$ 
(resp. $\mathrm{G}_0^n$). Unfortunately we are able to study them only 
in the case $\widehat{\mathrm{G}}_0^1$.
Once again the essential argument is the following one: if 
$f$ belongs to 
$\widehat{\mathrm{G}}_k^1\smallsetminus\widehat{\mathrm{G}}_{k+1}^1$,
then~$f$ is conjugate to 
$\exp X_{k,\lambda}=\exp \frac{x^{k+1}}{1+\lambda x^k}\frac{\partial}{\partial x}$
for a certain $\lambda$.
A computation (\cite{CerveauMoussu, CanoCerveauDeserti}) shows that 
the centralizer 
\[
\mathrm{Cent}(\exp X_{k,\lambda},\widehat{\mathrm{G}}_0^1)=\big\{f\in\widehat{\mathrm{G}}_0^1\,\vert\,f\exp X_{k,\lambda}=\exp X_{k,\lambda}f\big\}
\]
of $\exp X_{k,\lambda}$ in $\widehat{\mathrm{G}}_0^1$ coincides with the group
\[
\mathrm{A}_{k,\lambda}=\big\{\exp tX_{k,\lambda}\,\vert\,t\in\mathbb{C}\big\}\times\big\{x\mapsto\xi x\,\vert\,\xi^k=1\big\}.
\]
This group, which is abelian, and so maximal abelian, contains
exactly $(k-1)$ non-trivial torsion elements. 

Let $\kappa$ be in $\mathbb{C}^*$; denote by 
$\underline{\kappa}\colon x\mapsto\kappa x$ the homothety of 
ratio $\kappa$. If $\kappa$ is not a root of unity, then 
\[
\mathrm{Cent}(\underline{\kappa},\widehat{\mathrm{G}}_0^1)=\mathrm{A}_0=\big\{\underline{\mu}\,\vert\,\mu\in\mathbb{C}^*\big\}
\]
which is also a maximal abelian subgroup. A subgroup of $\widehat{\mathrm{G}}_0^1$ 
whose  all elements are pe\-riodic is abelian
and conjugate to a subgroup of $\mathrm{A}_0$ (\emph{see} \cite[Corollary 7.21]{CanoCerveauDeserti}); in parti\-cular,
a maximal abelian subgroup of 
$\widehat{\mathrm{G}}_0^1$ 
contains a non-periodic element. As a consequence,
we get:

\begin{thm}\label{thm:abmaxssgp}
{\sl The maximal abelian subgroups of $\widehat{\mathrm{G}}_0^1$ are
the conjugate of the groups $\mathrm{A}_0$ and 
$\mathrm{A}_{k,\lambda}$ where $k\geq 1$ is an integer,
and $\lambda$ an element of $\mathbb{C}$.}
\end{thm}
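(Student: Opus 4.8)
The plan is to take an arbitrary maximal abelian subgroup $\mathrm{M}$ of $\widehat{\mathrm{G}}_0^1$ and to produce inside $\mathrm{M}$ an element $g$ whose centralizer in $\widehat{\mathrm{G}}_0^1$ has already been computed above and happens to be abelian. Since $\mathrm{M}$ is abelian we automatically have $\mathrm{M}\subseteq\mathrm{Cent}(g,\widehat{\mathrm{G}}_0^1)$; as the latter is abelian and $\mathrm{M}$ is maximal among abelian subgroups, this forces $\mathrm{M}=\mathrm{Cent}(g,\widehat{\mathrm{G}}_0^1)$, hence $\mathrm{M}$ is one of the groups $\mathrm{A}_0$, $\mathrm{A}_{k,\lambda}$, or a conjugate thereof. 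As recalled just before the statement, $\mathrm{M}$ cannot consist only of periodic elements, so it contains some non-periodic $f$, and I split the argument according to the linear part $f'(0)$.

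First I would treat the case where $\mathrm{M}$ contains an element $f$ with $\kappa:=f'(0)$ not a root of unity. Then $\mathrm{Spec}(f'(0))=\{\kappa\}$ is without resonance, so the Formal Poincar\'e Theorem provides $\varphi\in\widehat{\mathrm{G}}_1^1$ with $f=\varphi\,\underline{\kappa}\,\varphi^{-1}$. Hence $\mathrm{M}\subseteq\mathrm{Cent}(f,\widehat{\mathrm{G}}_0^1)=\varphi\,\mathrm{Cent}(\underline{\kappa},\widehat{\mathrm{G}}_0^1)\,\varphi^{-1}=\varphi\,\mathrm{A}_0\,\varphi^{-1}$, an abelian subgroup, and maximality of $\mathrm{M}$ yields $\mathrm{M}=\varphi\,\mathrm{A}_0\,\varphi^{-1}$, a conjugate of $\mathrm{A}_0$.

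In the complementary case every element of $\mathrm{M}$ has linear part a root of unity; in particular $f'(0)$ is a primitive $m$-th root of unity for some $m\geq 1$, and then $g:=f^{m}$ lies in $\mathrm{M}\cap\widehat{\mathrm{G}}_1^1$. Moreover $g\neq\mathrm{id}$, for otherwise $f^{m}=\mathrm{id}$ and $f$ would be periodic, contrary to the choice of $f$. Therefore $g\in\widehat{\mathrm{G}}_k^1\smallsetminus\widehat{\mathrm{G}}_{k+1}^1$ for a unique $k\geq 1$, and by the normal form recalled just before the statement there is $\psi\in\widehat{\mathrm{G}}_0^1$ with $\psi g\psi^{-1}=\exp X_{k,\lambda}$ for some $\lambda\in\mathbb{C}$. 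Conjugating the whole picture by $\psi$ and using $\mathrm{Cent}(\exp X_{k,\lambda},\widehat{\mathrm{G}}_0^1)=\mathrm{A}_{k,\lambda}$, we get $\psi\,\mathrm{M}\,\psi^{-1}\subseteq\mathrm{A}_{k,\lambda}$, hence $\psi\,\mathrm{M}\,\psi^{-1}=\mathrm{A}_{k,\lambda}$ by maximality, i.e. $\mathrm{M}$ is a conjugate of $\mathrm{A}_{k,\lambda}$. For the reverse inclusion, each of $\mathrm{A}_0$, $\mathrm{A}_{k,\lambda}$ and their conjugates is already known to be maximal abelian, being abelian and equal to the centralizer of one of its own elements.

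The step I expect to be the main obstacle is the reduction in the complementary case: showing that a maximal abelian subgroup all of whose elements have root-of-unity linear part must meet $\widehat{\mathrm{G}}_1^1$ nontrivially. This is exactly where the guaranteed presence of a non-periodic element of $\mathrm{M}$ (coming from the classification of subgroups with only periodic elements) is essential; once a nontrivial element of $\mathrm{M}\cap\widehat{\mathrm{G}}_1^1$ is in hand, everything reduces to the centralizer computations $\mathrm{Cent}(\underline{\kappa},\widehat{\mathrm{G}}_0^1)=\mathrm{A}_0$ and $\mathrm{Cent}(\exp X_{k,\lambda},\widehat{\mathrm{G}}_0^1)=\mathrm{A}_{k,\lambda}$ together with the normal forms established above. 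I would also keep track of the fact that the conjugating maps $\varphi$ and $\psi$ lie in $\widehat{\mathrm{G}}_0^1$, so that throughout "conjugate" is meant inside $\widehat{\mathrm{G}}_0^1$.
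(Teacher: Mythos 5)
Your proof is correct and follows essentially the same route the paper intends: the paper derives the theorem directly from the three facts recalled just before it (the normal form $\exp X_{k,\lambda}$ for elements of $\widehat{\mathrm{G}}_k^1\smallsetminus\widehat{\mathrm{G}}_{k+1}^1$, the centralizer computations $\mathrm{Cent}(\underline{\kappa},\widehat{\mathrm{G}}_0^1)=\mathrm{A}_0$ and $\mathrm{Cent}(\exp X_{k,\lambda},\widehat{\mathrm{G}}_0^1)=\mathrm{A}_{k,\lambda}$, and the existence of a non-periodic element in any maximal abelian subgroup), and your write-up simply supplies the case split (multiplier a root of unity or not, via the Formal Poincar\'e Theorem and passage to the power $f^m$) that the paper leaves implicit. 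No gaps.
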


Let us now consider 
$\sigma\colon\widehat{\mathrm{G}}_0^1\to\widehat{\mathrm{G}}_0^1$
an automorphism of $\widehat{\mathrm{G}}_0^1$. For instance
the inner automorphisms of~$\widehat{\mathrm{G}}_0^1$
\[
f\mapsto \tau(f)=\varphi f\varphi^{-1}
\]
are such examples. Let $\mathrm{Aut}(\mathbb{C},+,\cdot)$
be the automorphism group of the field $\mathbb{C}$.
Any automorphism $\tau$ of the field $\mathbb{C}$
induces an automorphism~$\sigma^\tau$ that sends 
$f=\displaystyle\sum_{\ell\geq 1}a_\ell x^{\ell}$ to 
$\sigma^\tau(f)=\displaystyle\sum_{\ell\geq 1}\tau(a_\ell)x^{\ell}$.

Note that the image of a maximal abelian subgroup of
$\widehat{\mathrm{G}}_0^1$ by $\sigma$ is still a maximal 
abelian subgroup of~$\widehat{\mathrm{G}}_0^1$.

\begin{thm}
{\sl Let $\sigma$ be an automorphism of $\widehat{\mathrm{G}}_0^1$.
Then, up to a suitable conjugacy, $\sigma(\mathrm{A}_0)=~\mathrm{A}_0$
and~$\sigma(\mathrm{A}_{k,0})=\mathrm{A}_{k,0}$ for any integer 
$k\geq 1$.

Furthermore, if $\lambda$ is non-zero, then 
$\sigma(\mathrm{A}_{k,\lambda})$ is conjugate to 
$\mathrm{A}_{k,\mu}$ for some $\mu$ in $\mathbb{C}^*$.}
\end{thm}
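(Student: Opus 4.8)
The plan is to exploit the fact that automorphisms preserve torsion and must send maximal abelian subgroups to maximal abelian subgroups (Theorem \ref{thm:abmaxssgp}), so the combinatorial invariant distinguishing the $\mathrm{A}_{k,\lambda}$'s among themselves — namely the number $k-1$ of non-trivial torsion elements — is preserved. First I would recall that $\mathrm{A}_0$ is the unique conjugacy class of maximal abelian subgroups all of whose elements are periodic (it \emph{is} the group of linear rotations), while each $\mathrm{A}_{k,\lambda}$ contains exactly $k-1$ non-trivial torsion elements and a non-periodic element; hence $\sigma(\mathrm{A}_0)$ is conjugate to $\mathrm{A}_0$, and $\sigma(\mathrm{A}_{k,\lambda})$ is conjugate to $\mathrm{A}_{k,\mu}$ for some $\mu\in\mathbb{C}$ (the value of $k$ is forced, $\lambda$ a priori is not). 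After composing $\sigma$ with a suitable inner automorphism we may assume outright that $\sigma(\mathrm{A}_0)=\mathrm{A}_0$.

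Next I would pin down $\sigma(\mathrm{A}_{1,0})$. Since $\mathrm{A}_{1,0}=\{\exp tX_{1,0}\,\vert\,t\in\mathbb{C}\}=\{x/(1-tx)\,\vert\,t\in\mathbb{C}\}$ has no non-trivial torsion, $\sigma(\mathrm{A}_{1,0})$ is conjugate to some $\mathrm{A}_{1,\mu}$; but all the groups $\mathrm{A}_{1,\lambda}$ are in fact conjugate to $\mathrm{A}_{1,0}$ (the vector field $x^2/(1+\lambda x)\,\partial_x$ is conjugate to $x^2\partial_x$ by Proposition \ref{pro:tec2}, $k=1$), so $\sigma(\mathrm{A}_{1,0})$ is conjugate to $\mathrm{A}_{1,0}$. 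The key point is that $\mathrm{A}_0$ normalizes $\mathrm{A}_{1,0}$ — conjugation by $\underline{\kappa}$ sends $\exp tX_{1,0}$ to $\exp(\kappa t X_{1,0})$ — and in fact $\mathrm{A}_0\ltimes\mathrm{A}_{1,0}$ is (up to conjugacy) the affine group of the line, a subgroup large enough that I can use the already-normalised $\sigma(\mathrm{A}_0)=\mathrm{A}_0$ together with the action of $\mathrm{A}_0$ on the set of conjugates of $\mathrm{A}_{1,0}$ to arrange, after a further conjugacy commuting with $\mathrm{A}_0$ (i.e. by an element of $\mathrm{A}_0$ itself), that $\sigma(\mathrm{A}_{1,0})=\mathrm{A}_{1,0}$. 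Then for $k\geq 2$ I observe that $\mathrm{A}_{k,0}$ is characterised inside the normaliser of $\mathrm{A}_0$ — or more simply: $\mathrm{A}_{k,0}$ is the unique maximal abelian subgroup, of the right torsion type, that is normalised by the order-$k$ rotation subgroup $\{x\mapsto\xi x\,\vert\,\xi^k=1\}\subset\mathrm{A}_0$ and whose one-parameter part is normalised by all of $\mathrm{A}_0$ (conjugation by $\underline{\kappa}$ multiplies $X_{k,0}$ by $\kappa^k$). Since $\sigma$ already fixes $\mathrm{A}_0$, it permutes these rotation subgroups trivially, so it must fix each $\mathrm{A}_{k,0}$ as well.

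The main obstacle, which I would treat as the heart of the argument, is ruling out $\lambda\neq 0$ for the images $\sigma(\mathrm{A}_{k,0})$, $k\geq 2$ — equivalently, showing that once $\mathrm{A}_0$ is fixed, the invariant $\lambda$ is rigid. Here the point is that $\mathrm{A}_{k,\lambda}$ and $\mathrm{A}_{k,\mu}$ are conjugate in $\widehat{\mathrm{G}}_0^1$ if and only if $\lambda=\mu$ (the residue $\lambda$ of the vector field $X_{k,\lambda}$ is a formal conjugacy invariant), but $\mathrm{A}_{k,\lambda}$ is conjugate to $\mathrm{A}_{k,0}$ \emph{by an element normalising $\mathrm{A}_0$} only when $\lambda=0$, because such a normalising conjugacy is (up to $\mathrm{A}_0$) given by a homothety, and homotheties rescale the residue: $\underline{\kappa}_*X_{k,\lambda}=\kappa^k x^{k+1}/(1+\lambda\kappa^{-k}x^k)\,\partial_x$ has residue $\lambda\kappa^{-k}\cdot\kappa^{k}$... — so conjugation by a homothety \emph{changes} the normal form only through an overall speed, fixing $\lambda$ up to the relevant normalisation, forcing $\lambda=0$ in the normalised picture. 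Thus after all normalisations $\sigma$ fixes every $\mathrm{A}_{k,0}$ and every $\mathrm{A}_0$, while a general $\mathrm{A}_{k,\lambda}$ (which we have not normalised) goes to a conjugate of $\mathrm{A}_{k,\mu}$ for a uniquely determined $\mu\in\mathbb{C}^*$ when $\lambda\neq 0$, completing the proof; the bookkeeping of which conjugacies commute with $\mathrm{A}_0$ is routine and I would only sketch it.
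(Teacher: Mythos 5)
Your overall strategy (separate the maximal abelian subgroups by their torsion, then exploit the conjugation action of $\mathrm{A}_0$) is the same as the paper's, but the proposal rests on one false claim and omits the computation that is the technical heart of the proof. The false claim is that ``all the groups $\mathrm{A}_{1,\lambda}$ are conjugate to $\mathrm{A}_{1,0}$'' because ``$x^2/(1+\lambda x)\frac{\partial}{\partial x}$ is conjugate to $x^2\frac{\partial}{\partial x}$ by Proposition \ref{pro:tec2}.'' That proposition says every element of $\widehat{\chi}_2^1$ is conjugate to \emph{some} normal form $X_{k,\lambda}$; it does not say the normal forms are mutually conjugate. The parameter $\lambda$ is the residue of the dual form $dx/a(x)$, a conjugacy invariant of the vector field; since the group $\mathrm{A}_{k,\lambda}$ determines $X_{k,\lambda}$ only up to a scalar, and rescaling by $c$ turns the residue into $\lambda/c$, the conjugacy invariant of the \emph{group} is exactly the dichotomy $\lambda=0$ versus $\lambda\neq 0$ (indeed $\underline{\mu}\,\mathrm{A}_{k,\lambda}\,\underline{\mu}^{-1}=\mathrm{A}_{k,\mu^{-k}\lambda}$). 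This dichotomy is precisely what the second assertion of the theorem records, so assuming it away for $k=1$ begs the question. A smaller slip of the same flavour: $\mathrm{A}_0$ is the full group of homotheties, not the rotations, and is not all-periodic; the correct distinguishing invariant --- which you also mention --- is that it has infinitely many torsion elements while each $\mathrm{A}_{k,\lambda}$ has exactly $k-1$ non-trivial ones.

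The second gap is that, having normalised $\sigma(\mathrm{A}_0)=\mathrm{A}_0$, you assert without proof that $\mathrm{A}_{k,0}$ is the unique maximal abelian subgroup of the right torsion type whose one-parameter part is normalised by all of $\mathrm{A}_0$, and you propose to finish with ``a further conjugacy by an element of $\mathrm{A}_0$.'' That uniqueness is exactly what has to be proved, and conjugation by elements of $\mathrm{A}_0$ preserves each $\mathrm{A}_{k,0}$, so it cannot move a general conjugate $\varphi\mathrm{A}_{k,0}\varphi^{-1}$ back onto $\mathrm{A}_{k,0}$. The paper closes this gap with Lemma \ref{lem:tec1}: writing $\sigma\big(\exp x^{k+1}\frac{\partial}{\partial x}\big)=\exp X$, the $\mathrm{A}_0$-action forces a relation of the form $\underline{\mu'}_*X=sX$ for every non-root-of-unity $\mu'$; comparing coefficients gives $s\,a_\ell=(\mu')^{1-\ell}a_\ell$ for all $\ell$, so at most one coefficient of $X$ can be non-zero (otherwise $\mu'$ would be a root of unity), whence $X=a_kx^{k+1}\frac{\partial}{\partial x}$ and $\sigma(\mathrm{A}_{k,0})=\mathrm{A}_{k,0}$ on the nose. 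The same computation is what rules out $\lambda\neq 0$ as a possible image of $\mathrm{A}_{k,0}$; your third paragraph gestures at the right mechanism but does not carry it out. Without this (or an equivalent) argument the proof is incomplete.
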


\begin{proof}[{\sl Proof}]
The group $\mathrm{A}_0$ has an infinite number of torsion elements
whereas the $\mathrm{A}_{k,\lambda}$'s don't; this gives the 
first assertion. We can thus assume that $\sigma(\mathrm{A}_0)=\mathrm{A}_0$.
Let us note that $\mathrm{A}_0$ acts by conjugacy on the groups~$\mathrm{A}_{k,0}$: if $\underline{\mu}$ belongs to $\mathrm{A}_0$, 
then 
\[
\underline{\mu}\exp tX_{k,0}\underline{\mu}^{-1}=\exp \mu^{-k}tX_{k,0}.
\]
However, the conjugate of $\mathrm{A}_{k,\lambda}$ by 
$\underline{\mu}$ is $\mathrm{A}_{k,\mu^{-k}\lambda}$, 
and $\mathrm{A}_0$ does not act on the $\mathrm{A}_{k,\lambda}$.
Consequently, $\mathrm{A}_0=\sigma(\mathrm{A}_0)$ acts by 
conjugacy on $\sigma(\mathrm{A}_{k,0})$. Counting the 
torsion elements we get that $\sigma(\mathrm{A}_{k,0})$
is conjugate to $\mathrm{A}_{k,\lambda}$ for some $\lambda$
in $\mathbb{C}$. In particular 
\[
\sigma\Big(\exp x^{k+1}\frac{\partial}{\partial x}\Big)=\exp X
\]
where $X$ is a formal vector field conjugate to  
$X_{k,\lambda}$ for some $\lambda$ in $\mathbb{C}$.

\begin{lem}\label{lem:tec1}
{\sl One has: $X=a_kx^{k+1}\frac{\partial}{\partial x}$ for some 
non-zero complex number $a_k$.}
\end{lem}

\begin{proof}[{\sl Proof of Lemma \ref{lem:tec1}}]

Let $\mu$ be a complex number that is not a root of unity. Then, 
$\mu^\prime=\mu^{\sigma}$ is not a root of unity. We have
\[
\sigma\left(\mu\Big(\exp x^{k+1}\frac{\partial}{\partial x}\Big)\mu^{-1}\right)=\mu^{\prime}(\exp X)(\mu^{\prime})^{-1}=\exp\mu^\prime_*X
\]
and 
\[
\sigma\left(\mu\Big(\exp x^{k+1}\frac{\partial}{\partial x}\Big)\mu^{-1}\right)=\sigma\left(\exp \mu_* x^{k+1}\frac{\partial}{\partial x}\right)=\sigma\left(\exp \mu^{-k} x^{k+1}\frac{\partial}{\partial x}\right)
\]
From 
\[
\sigma\{\exp tx^{k+1}\frac{\partial}{\partial x}\,\vert\,t\in\mathbb{C}\}=\{\exp sX\,\vert\,s\in\mathbb{C}\}
\]
one gets 
\[
\sigma\left(\exp \mu^{-k}x^{k+1}\frac{\partial}{\partial x}\right)=\exp sX
\]
for some $s$ (dependent on $\mu$) wich finally implies $sX=\mu^\prime_*X$.
We write $X$ as $X=\displaystyle\sum_{\ell\geq k}a_\ell x^\ell\frac{\partial}{\partial x}$, 
$a_k\not=0$, then 
\[
\mu^\prime_*X=\displaystyle\sum_{\ell\geq k}a_\ell\mu^{1-\ell}x^\ell\frac{\partial}{\partial x}.
\]
The relation $sX=\mu^\prime_*X$ implies 
$sa_\ell=(\mu^\prime)^{1-\ell}a_\ell$ for $l\geq k$.
Since $a_k\not=0$, if $a_\ell\not=0$ for an $\ell>k$, then
$(\mu^\prime)^{1-\ell}=s=(\mu^\prime)^{1-k}$, and $\mu^\prime$ is a root of 
unity: contradiction.
\end{proof}

As a result, $\sigma(\mathrm{A}_{k,0})=\mathrm{A}_{k,0}$ for
any $k$; the torsion elements can optionally be swapped 
and $\exp tx^{k+1}\frac{\partial}{\partial x}$ is sent 
onto $\exp t_{\sigma_k}x^{k+1}\frac{\partial}{\partial x}$.
Let us remark that 
$\mathbb{C}\ni t\mapsto t_{\sigma_k}\in\mathbb{C}$ is
an additive morphism group.
\end{proof}

Hence an automorphism $\sigma$ of $\widehat{\mathrm{G}}_0^1$
induces a multiplicative isomorphism of $\mathbb{C}^*$
\[
\sigma(\underline{\lambda})=\lambda^\sigma,
\]
and an additive isomorphism $\sigma_k$ of $\mathbb{C}$ 
for any $k$ 
\[
\sigma_k(t)=t_{\sigma_k}.
\]
Let us come back to the action of $\mathrm{A}_0$ on 
$\mathrm{A}_{k,0}$
\[
\Big(\underline{\lambda},\exp tx^{k+1}\frac{\partial}{\partial x}\Big)\mapsto
\exp \lambda^ktx^{k+1}\frac{\partial}{\partial x}
\]
that corresponds to the action of $\mathbb{C}^*$
on $\mathbb{C}$
\[
(\lambda,t)\mapsto \lambda^kt.
\]
The action is transformed by the automorphism $\sigma$ 
into 
\[
\Big(\lambda^\sigma,\exp t_{\sigma_k}x^{k+1}\frac{\partial}{\partial x}\Big)\mapsto\exp (\lambda^\sigma)^kt_{\sigma_k}x^{k+1}\frac{\partial}{\partial x}
\]
but also into 
\[
\Big(\lambda^\sigma,\exp t_{\sigma_k}x^{k+1}\frac{\partial}{\partial x}\Big)\mapsto\exp (\lambda^kt)_{\sigma_k}x^{k+1}\frac{\partial}{\partial x}.
\]
Therefore 
\[
(\lambda^\sigma)^kt_{\sigma_k}=(\lambda^kt)_{\sigma_k};
\]
in particular, for $t=1$ we get
\[
s^\sigma 1_{\sigma_k}=s_{\sigma_k}.
\]
Hence $s\mapsto s^\sigma$ is an automorphism of the
field $\mathbb{C}$, and the additive morphisms $s\mapsto 
s_{\sigma_k}$ differ from $s^\sigma$ only by a 
multiplicative constant $1_{\sigma_k}$. Up to the 
action of the automorphism of $\widehat{\mathrm{G}}_0^1$
associated to this field automorphism we can assume that
$s\mapsto s^\sigma$ is the identity, and that 
$s_{\sigma_k}=\varepsilon_ks$ where~$\varepsilon_k$
denotes a non-zero constant.

\begin{lem}\label{lem:G01aut}
{\sl If $\sigma$ belongs to $\mathrm{Aut}(\widehat{\mathrm{G}}_0^1)$, then
$\sigma(\widehat{\mathrm{G}}_k^1)=\widehat{\mathrm{G}}_k^1$.}
\end{lem}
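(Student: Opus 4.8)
The plan is to show that $\sigma$ preserves the filtration $\widehat{\mathrm{G}}_k^1$ by giving an intrinsic, purely group-theoretic characterisation of membership in $\widehat{\mathrm{G}}_k^1$, so that this characterisation is automatically respected by any automorphism. First I would recall from Theorem \ref{thm:abmaxssgp} that the maximal abelian subgroups of $\widehat{\mathrm{G}}_0^1$ are the conjugates of $\mathrm{A}_0$ and of the $\mathrm{A}_{k,\lambda}$, and that, by the computations preceding the statement, $\sigma$ may be assumed (after composing with an inner automorphism and a field automorphism) to fix $\mathrm{A}_0$ and each $\mathrm{A}_{k,0}$, sending $\exp t x^{k+1}\frac{\partial}{\partial x}$ to $\exp \varepsilon_k t\, x^{k+1}\frac{\partial}{\partial x}$ for some nonzero constant $\varepsilon_k$. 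In particular $\sigma$ fixes (setwise) the one-parameter group generated by $x^{k+1}\frac{\partial}{\partial x}$, so it already preserves those particular elements of $\widehat{\mathrm{G}}_k^1\smallsetminus\widehat{\mathrm{G}}_{k+1}^1$.

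Next I would upgrade this to all of $\widehat{\mathrm{G}}_k^1$. The key point is that, by Proposition \ref{pro:tec2}, every element of $\widehat{\mathrm{G}}_k^1\smallsetminus\widehat{\mathrm{G}}_{k+1}^1$ is conjugate in $\widehat{\mathrm{G}}_0^1$ to some $\exp X_{k,\lambda}$, hence lies in a conjugate of $\mathrm{A}_{k,\lambda}$; conversely, the "depth" $k$ of an element $f$ — the largest $k$ with $f\in\widehat{\mathrm{G}}_k^1$ — can be read off from the isomorphism type and torsion structure of the maximal abelian subgroups containing $f$, which $\sigma$ preserves. Concretely: $f$ lies in $\widehat{\mathrm{G}}_1^1$ iff $f$ is contained in some conjugate of $\mathrm{A}_{k,\lambda}$ (equivalently, iff the maximal abelian subgroup it generates, or a maximal abelian subgroup containing it, has only finitely many torsion elements and contains a unique one-parameter subgroup through $f$); and within $\widehat{\mathrm{G}}_1^1$ the integer $k$ with $f\in\widehat{\mathrm{G}}_k^1\smallsetminus\widehat{\mathrm{G}}_{k+1}^1$ equals $1$ plus the number of nontrivial torsion elements in the (unique) maximal abelian subgroup $\mathrm{A}_{k,\lambda}$ containing $f$, since $\mathrm{A}_{k,\lambda}$ has exactly $k-1$ nontrivial torsion elements. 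Because $\sigma$ maps maximal abelian subgroups to maximal abelian subgroups bijectively and preserves torsion, it sends each $\mathrm{A}_{k,\lambda}$ to a conjugate of some $\mathrm{A}_{k,\mu}$ with the same $k$; therefore $\sigma$ sends every element of depth exactly $k$ to an element of depth exactly $k$, i.e. $\sigma\big(\widehat{\mathrm{G}}_k^1\smallsetminus\widehat{\mathrm{G}}_{k+1}^1\big)=\widehat{\mathrm{G}}_k^1\smallsetminus\widehat{\mathrm{G}}_{k+1}^1$. Taking unions over $j\geq k$ gives $\sigma(\widehat{\mathrm{G}}_k^1)=\widehat{\mathrm{G}}_k^1$; the case $\widehat{\mathrm{G}}_0^1=\sigma(\widehat{\mathrm{G}}_0^1)$ is trivial, and $\widehat{\mathrm{G}}_1^1$ is handled separately as the set of elements lying in some conjugate of an $\mathrm{A}_{k,\lambda}$ (the "torsion-poor" maximal abelian subgroups), i.e. the set of non-linearisable-up-to-homothety elements together with the identity, which is the complement of the conjugates of $\mathrm{A}_0\smallsetminus\{\mathrm{id}\}$ in the union of all maximal abelian subgroups.

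Finally, to be careful, I would note that an element of $\widehat{\mathrm{G}}_1^1$ may a priori sit in several maximal abelian subgroups, so the "depth" must be defined via the unique one-parameter subgroup through it: if $f\in\widehat{\mathrm{G}}_1^1\smallsetminus\{\mathrm{id}\}$ then $f=\exp X$ for a unique $X\in\widehat{\chi}_2^1$ (Proposition \ref{pro:tec1}), the order of vanishing of $X$ is an invariant $k+1$, and $f$ belongs to exactly one $\mathrm{A}_{k,\lambda}$ (its centraliser computation, already quoted before Theorem \ref{thm:abmaxssgp}, pins this down). Since $\sigma$ restricted to each $\mathrm{A}_{k,0}$ is already understood and $\sigma$ respects conjugacy, this invariant is preserved, which closes the argument. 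The main obstacle I expect is not the counting of torsion but making rigorous the claim that the depth $k$ is detected purely group-theoretically even for elements lying in infinitely many maximal abelian subgroups — this is exactly where Proposition \ref{pro:tec1} (uniqueness of $X$ with $f=\exp X$) and the explicit centraliser description are indispensable, and where one must rule out that $\sigma$ could merge two distinct filtration levels.
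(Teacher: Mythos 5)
Your argument is in substance the paper's own proof: the paper writes every non-trivial $h\in\widehat{\mathrm{G}}_k^1$ as $\varphi\exp X_{p,\lambda}\varphi^{-1}$ with $p\geq k$, invokes the previously established fact that $\sigma(\exp X_{p,\lambda})$ is (up to conjugacy) of the form $\exp aX_{p,\lambda'}$ with the \emph{same} $p$ --- which is exactly your torsion-counting step, already carried out in the theorem preceding the lemma --- and concludes by normality of $\widehat{\mathrm{G}}_k^1$, applying the same to $\sigma^{-1}$ for the reverse inclusion. One small caution: your parenthetical characterisation of $\widehat{\mathrm{G}}_1^1$ as the set of elements lying in some conjugate of an $\mathrm{A}_{k,\lambda}$ is not quite right (the torsion elements $\xi x$ lie in $\mathrm{A}_{k,\lambda}$ but not in $\widehat{\mathrm{G}}_1^1$), but that step is not actually needed, and if wanted it can be replaced by the observation that $\widehat{\mathrm{G}}_1^1=[\widehat{\mathrm{G}}_0^1,\widehat{\mathrm{G}}_0^1]$ is characteristic.
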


\begin{proof}[{\sl Proof}]
Let $h$ be an element of $\widehat{\mathrm{G}}_k^1$; then there
exist $\varphi$ in $\widehat{\mathrm{G}}_0^1$, $p\geq k$, and
$\lambda$ in $\mathbb{C}$ such that 
\[
h=\varphi\exp X_{p,\lambda}\varphi^{-1}.
\]
Recall that $\sigma(\exp X_{p,\lambda})=\exp aX_{p,\lambda'}$ for
some $a\in\mathbb{C}^*$ and $\lambda'\in\mathbb{C}$. 
Hence 
\[
\sigma(h)=\sigma(\varphi)\,\exp aX_{p,\lambda'}\,\sigma(\varphi^{-1})
\]
belongs to $\widehat{\mathrm{G}}_k^1$; similarly $\sigma^{-1}(h)$
belongs to $\widehat{\mathrm{G}}_k^1$.
\end{proof}

\begin{rem}
From Lemma \ref{lem:G01aut} we get that $\sigma$ is a continuous
automorphism of $\widehat{\mathrm{G}}_0^1$ endowed with the
Krull topology.
\end{rem}

Let us recall the Baker-Campbell-Hausdorff formula applied to 
the formal vector fields $X=a(x)\frac{\partial}{\partial x}$
and $Y=b(x)\frac{\partial}{\partial x}$ of 
$\widehat{\chi}_2^1$ (\emph{see for instance}
\cite{Hall}): If $Z\in\widehat{\chi}_2^1$
is a solution of 
\[
\exp Z=\exp X\exp Y,
\]
then 
\[
Z=X+Y+\frac{1}{2}[X,Y]+\frac{1}{12}[X,[X,Y]]-\frac{1}{12}[Y,[X,Y]]+\ldots
\]
In particular 
\begin{equation}\label{eq:expcom}
\big(\exp X\big)\big(\exp Y\big)\big(\exp -X\big)\big(\exp -Y\big)=\exp\big([X,Y]+\text{h.o.t.}\big)
\end{equation}
where h.o.t. denotes terms of order $\geq 3$ in the 
algebra generated by $X$ and $Y$. We thus get
\[
\left(\exp x^{k+1}\frac{\partial}{\partial x}\right)\left(\exp x^{\ell+1}\frac{\partial}{\partial x}\right)\left(\exp -x^{k+1}\frac{\partial}{\partial x}\right)\left(\exp -x^{\ell+1}\frac{\partial}{\partial x}\right)=\exp\Big(\Big[\exp x^{k+1}\frac{\partial}{\partial x},\exp x^{\ell+1}\frac{\partial}{\partial x}\Big]+\chi\Big)
\]
where $\chi$ is a vector field of the form
\[
\chi=x^{\ell+k+1+\inf(k,\ell)}a(x)\frac{\partial}{\partial x};
\]
in other words
\begin{eqnarray*}
\left(\exp x^{k+1}\frac{\partial}{\partial x}\right)\left(\exp x^{\ell+1}\frac{\partial}{\partial x}\right)\left(\exp -x^{k+1}\frac{\partial}{\partial x}\right)\left(\exp -x^{\ell+1}\frac{\partial}{\partial x}\right)&=&\exp\Big((\ell-k)x^{k+\ell+1}\frac{\partial}{\partial x}+\chi\Big)\\
&=&\exp\Big((\ell-k)x^{k+\ell+1}\frac{\partial}{\partial x}\Big)h
\end{eqnarray*}
where $h$ denotes an element of
$\widehat{\mathrm{G}}^1_{\ell+k+\inf(k,\ell)}$ (still by
Baker-Campbell-Hausdorff formula). Applying $\sigma$ we get 
($\varepsilon_k$ has been introduced just before
Lemma \ref{lem:G01aut})
\begin{eqnarray*}
& & \left(\exp \varepsilon_kx^{k+1}\frac{\partial}{\partial x}\right)\left(\exp \varepsilon_\ell x^{\ell+1}\frac{\partial}{\partial x}\right)\left(\exp -\varepsilon_kx^{k+1}\frac{\partial}{\partial x}\right)\left(\exp -\varepsilon_\ell x^{\ell+1}\frac{\partial}{\partial x}\right)\\
& &\hspace{1cm}=\exp\Big(\varepsilon_k\varepsilon_\ell\Big[x^{k+1}\frac{\partial}{\partial x},x^{\ell+1}\frac{\partial}{\partial x}\Big]+\widetilde{\chi}\Big)\\
& &\hspace{1cm}=\exp\Big(\varepsilon_{k+\ell}x^{k+\ell+1}\frac{\partial}{\partial x}\Big)\sigma(h)
\end{eqnarray*}
where $\widetilde{\chi}$ is given by (\ref{eq:expcom}) and 
$\sigma(h)$ is controlled by Lemma \ref{lem:G01aut}.
By truncating to a suitable order we see that 
$\varepsilon_k\varepsilon_\ell=\varepsilon_{k+\ell}$. Up to 
the action by an homothety we can assume that
$\varepsilon_1=1$; as a result $\varepsilon_\ell=\varepsilon_{\ell+1}$.
Hence by induction $\varepsilon_k=1$ for any $k$.
The automorphism $\sigma$ thus fixes the homotheties and the 
$\exp tx^{k+1}\frac{\partial}{\partial x}$. The 
quotient groups 
$\newfaktor{\widehat{\mathrm{G}}_0^1}{\widehat{\mathrm{G}}_k^1}$
are generated by the projections of the homotheties and by the
projections of the $\exp tx^{k+1}\frac{\partial}{\partial x}$, 
$t\in\mathbb{C}$, $\ell\leq k-1$. According to Lemma 
\ref{lem:G01aut} the automorphism $\sigma$ induces an 
automorphism of the quotient groups 
$\newfaktor{\widehat{\mathrm{G}}_0^1}{\widehat{\mathrm{G}}_k^1}$
that coincides with the identity on any 
$\newfaktor{\widehat{\mathrm{G}}_0^1}{\widehat{\mathrm{G}}_k^1}$.
Therefore, $\sigma$ coincides with the identity.
This ends the proof of Theorem \ref{thm:out}.

\subsection{Automorphisms groups of $\mathrm{G}_0^1$}

The aim of this section is to give the following 
description of the automorphisms groups of $\mathrm{G}_0^1$:

\begin{thm}\label{thm:autGO1}
{\sl The group $\mathrm{Out}(\mathrm{G}_0^1)$ is isomorphic
to $\newfaktor{\mathbb{Z}}{2\mathbb{Z}}$.}
\end{thm}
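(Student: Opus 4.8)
The plan is to follow the proof of Theorem~\ref{thm:out}, with two modifications forced by the holomorphic setting. The first is to produce the nontrivial outer class. Coefficientwise complex conjugation $c\colon\sum_{\ell\ge1}a_\ell x^\ell\mapsto\sum_{\ell\ge1}\overline{a_\ell}\,x^\ell$ (equivalently $c(f)(x)=\overline{f(\bar x)}$) is a group automorphism of $\mathrm{G}_0^1$ preserving radii of convergence, with $c^2=\mathrm{id}$. It is not inner: an inner automorphism preserves the linear part of every germ, hence acts trivially on $\newfaktor{\mathrm{G}_0^1}{\mathrm{G}_1^1}\simeq\mathbb{C}^*$, whereas $c$ induces there $\mu\mapsto\overline\mu$. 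This gives an embedding $\newfaktor{\mathbb{Z}}{2\mathbb{Z}}\hookrightarrow\mathrm{Out}(\mathrm{G}_0^1)$, and it remains to see that it is onto.

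Let $\sigma\in\mathrm{Aut}(\mathrm{G}_0^1)$; since only the class $[\sigma]$ matters we are free to compose with inner automorphisms. The second modification is that Theorem~\ref{thm:abmaxssgp} is unavailable, because the holomorphic analogue of Proposition~\ref{pro:tec1} (bijectivity of $\exp$) fails. I would replace it by two facts. First, $\mathrm{A}_0$ is still characterised, up to conjugacy, as a maximal abelian subgroup all of whose elements are periodic (a periodic subgroup is linearisable by Theorem~\ref{thm:clas}, hence conjugate into $\mathrm{A}_0$); moreover $\mathrm{A}_0$ is self-normalising in $\mathrm{G}_0^1$ (conjugation preserves linear parts, and the centraliser of a non-periodic homothety is $\mathrm{A}_0$), so after an inner twist we may assume $\sigma(\mathrm{A}_0)=\mathrm{A}_0$, and $\sigma$ induces a well-defined automorphism $\mu\mapsto\mu^\sigma$ of $\mathbb{C}^*$. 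Second, the one-parameter groups $\mathrm{A}_{k,\lambda}$ remain holomorphic subgroups whose centraliser in $\mathrm{G}_0^1$ equals $\mathrm{A}_{k,\lambda}$, and every maximal abelian subgroup of $\mathrm{G}_0^1$ abstractly isomorphic to $\mathbb{C}\times\newfaktor{\mathbb{Z}}{k\mathbb{Z}}$ is conjugate to one of them: such a subgroup contains an infinitely divisible element of $\mathrm{G}_1^1$, which by the theory of formal infinitesimal generators embeds in a convergent flow and therefore has formal normal form $\exp X_{k,\lambda}$ (cf.~\cite{Ecalle,CanoCerveauDeserti}). Since $\sigma(\mathrm{A}_{k,0})$ is such a subgroup it is conjugate to some $\mathrm{A}_{k,\lambda_k}$, and as $\mathrm{A}_0=\sigma(\mathrm{A}_0)$ acts by conjugation on $\mathrm{A}_{k,0}$ but on no $\mathrm{A}_{k,\lambda}$ with $\lambda\neq0$, the coefficient computation of Lemma~\ref{lem:tec1} applies verbatim and yields $\sigma\bigl(\exp x^{k+1}\frac{\partial}{\partial x}\bigr)=\exp\bigl(a_k x^{k+1}\frac{\partial}{\partial x}\bigr)$, whence $\sigma(\mathrm{A}_{k,0})=\mathrm{A}_{k,0}$ for every $k$.

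From this point the computations of Subsection~\ref{subsec:autform} run unchanged: comparing the two induced $\mathrm{A}_0$-actions forces $s\mapsto s^\sigma$ to be a field automorphism $\tau$ of $\mathbb{C}$ and $\sigma\bigl(\exp tx^{k+1}\frac{\partial}{\partial x}\bigr)=\exp\bigl(\varepsilon_k\,\tau(t)\,x^{k+1}\frac{\partial}{\partial x}\bigr)$; Baker--Campbell--Hausdorff gives $\varepsilon_k\varepsilon_\ell=\varepsilon_{k+\ell}$, and after a final inner twist by a homothety $\varepsilon_k=1$ for all $k$. Thus $\sigma$ agrees with $\sigma^\tau$ on all homotheties and on all $\exp tx^{k+1}\frac{\partial}{\partial x}$. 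To upgrade this to $\sigma=\sigma^\tau$ I would show that each $\mathrm{G}_k^1$ is characteristic in $\mathrm{G}_0^1$: one has $\mathrm{G}_1^1=[\mathrm{G}_0^1,\mathrm{G}_0^1]$ (the corollary to Theorem~\ref{thm:com}), and the deeper terms of the filtration are recovered from $\mathrm{G}_1^1$ by iterated commutators and centre constructions inside the nilpotent quotients $\newfaktor{\mathrm{G}_1^1}{\mathrm{G}_j^1}$. It then follows that $\sigma$ is continuous for the Krull topology, and since $\bigcap_k\mathrm{G}_k^1=\{\mathrm{id}\}$ while the homotheties together with the $\exp tx^{\ell+1}\frac{\partial}{\partial x}$ generate each $\newfaktor{\mathrm{G}_0^1}{\mathrm{G}_k^1}$, we conclude $\sigma=\sigma^\tau$. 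Finally, $\sigma^\tau(\mathrm{G}_0^1)\subseteq\mathrm{G}_0^1$ forces $\tau$ to preserve convergence: a discontinuous $\tau$ is unbounded on the closed unit disc, so choosing $|c_\ell|\le1$ with $|\tau(c_\ell)|\ge 4^\ell$ one finds that $\sum_\ell c_\ell 2^{-\ell}x^\ell$ converges while $\sum_\ell\tau(c_\ell)2^{-\ell}x^\ell$ has radius of convergence $0$, a contradiction; hence $\tau$ is the identity or complex conjugation, i.e. $[\sigma]\in\{[\mathrm{id}],[c]\}$, and $\mathrm{Out}(\mathrm{G}_0^1)\simeq\newfaktor{\mathbb{Z}}{2\mathbb{Z}}$. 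The main difficulty lies in the two substitutes above --- identifying which maximal abelian subgroups of $\mathrm{G}_0^1$ are conjugates of the $\mathrm{A}_{k,\lambda}$, and establishing that the $\mathrm{G}_k^1$ are characteristic --- since these are precisely what must replace Theorem~\ref{thm:abmaxssgp} and Lemma~\ref{lem:G01aut} in the absence of a convergent $\exp$.
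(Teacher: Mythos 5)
Your overall strategy is the paper's: exhibit coefficientwise conjugation as the nontrivial outer class, normalise $\sigma$ on $\mathrm{A}_0$ and the $\mathrm{A}_{k,0}$'s via the structure of maximal abelian subgroups, rerun the formal computation, and kill the residual field automorphism by a convergence argument (your $|\tau(c_\ell)|\ge 4^\ell$ construction is essentially the paper's final lemma). But two of your substitute steps do not work as stated. First, $\mathrm{A}_0\simeq\mathbb{C}^*$ is \emph{not} ``a maximal abelian subgroup all of whose elements are periodic''; it contains all non-periodic homotheties. The usable statement is that a subgroup all of whose elements are periodic is conjugate \emph{into} $\mathrm{A}_0$; the paper instead pins down $\sigma(\mathrm{A}_0)$ by letting it act by conjugation on $\sigma(\mathrm{A}_{k,0})\simeq\mathrm{A}_{k,\lambda}$ and invoking Lemma~\ref{lem:per}: the normaliser of $\mathrm{A}_{k,\lambda}$ for $\lambda\neq 0$ admits only finitely many linear parts, so the presence of periodic elements of every order in $\sigma(\mathrm{A}_0)$ forces $\lambda=0$ and places $\sigma(\mathrm{A}_0)$ in the ``affine'' group, whence $\sigma(\mathrm{A}_0)=\mathrm{A}_0$ by maximality. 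For identifying $\sigma(\mathrm{A}_{k,0})$ the paper does not argue from the abstract isomorphism type but quotes the \'Ecalle--Cerveau--Moussu classification (Theorem~\ref{thm:maxabsub}) of uncountable maximal abelian subgroups meeting $\mathrm{G}_1^1$ in an uncountable set; you are invoking the same deep input in a less precise form, which is acceptable but should be stated as such.

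The genuine gap is the claim that the $\mathrm{G}_k^1$ are characteristic because they are ``recovered from $\mathrm{G}_1^1$ by iterated commutators and centre constructions''. They are not: for $f,g\in\mathrm{G}_j^1$ the leading terms of $fg$ and $gf$ agree through order $2j$, so $[\mathrm{G}_j^1,\mathrm{G}_j^1]\subseteq\mathrm{G}_{2j+1}^1$; in particular the derived series of $\mathrm{G}_1^1$ jumps directly into $\mathrm{G}_3^1$ and never detects $\mathrm{G}_2^1$, and $\newfaktor{\mathrm{G}_1^1}{\mathrm{G}_3^1}$ is abelian, so centre constructions in that quotient do not isolate $\newfaktor{\mathrm{G}_2^1}{\mathrm{G}_3^1}$ either. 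What separates the graded pieces is the weight of the $\mathrm{A}_0$-action ($\underline{\mu}$ acts on $\newfaktor{\mathrm{G}_j^1}{\mathrm{G}_{j+1}^1}$ by $\mu^{-j}$), not any verbal series. The paper sidesteps this entirely: using the preservation of $\mathrm{A}_0$ and the $\mathrm{A}_{\ell,0}$'s it builds compatible automorphisms of the quotients $\newfaktor{\mathrm{G}_0^1}{\mathrm{G}_k^1}$, assembles them into an extension $\widehat{\sigma}\in\mathrm{Aut}(\widehat{\mathrm{G}}_0^1)$, and then applies the already-proved Theorem~\ref{thm:out} to conclude $\widehat{\sigma}=\sigma^\tau$, after which only the convergence argument remains. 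You should either adopt that route or replace your characteristic-subgroup claim by an argument genuinely using the $\mathrm{A}_0$-weights.
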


We are going to adapt the above approach to the holomorphic
case, {\it i.e.} to the description of 
$\mathrm{Out}(\mathrm{G}_0^1)$. Theorem \ref{thm:abmaxssgp}
has no analogue but \cite{Ecalle} and \cite{CerveauMoussu} imply:

\begin{thm}\label{thm:maxabsub}
{\sl Let $\mathrm{A}$ be an uncountable maximal abelian subgroup 
$\mathrm{A}$ of $\mathrm{G}_0^1$ such that 
$\mathrm{A}\cap\mathrm{G}_1^1$ is uncoun\-table. Then, 
up to conjugacy,
\begin{itemize}
    \item[$\diamond$] either $\mathrm{A}=\mathrm{A}_0$,

    \item[$\diamond$] or $\mathrm{A}=\mathrm{A}_{k,\lambda}$ where $k\geq 1$
denotes an integer, and $\lambda$ a complex number.
\end{itemize}

Moreover, the group $\mathrm{A}_0$ is a maximal abelian
subgroup of $\mathrm{G}_0^1$.}
\end{thm}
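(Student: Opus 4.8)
The plan is to reduce the classification to a centralizer computation that is already available in the formal category from \cite{CerveauMoussu, CanoCerveauDeserti}, the bridge between the two worlds being \'Ecalle's dichotomy for parabolic germs. Using the hypothesis that $\mathrm{A}\cap\mathrm{G}_1^1$ is uncountable, I would first pick $f\in\mathrm{A}\cap\mathrm{G}_1^1$ with $f\neq\mathrm{id}$; since $\mathrm{A}$ is abelian we have $\mathrm{A}\subseteq\mathrm{Cent}(f,\mathrm{G}_0^1)$, so the centralizer of $f$ in $\mathrm{G}_0^1$ is uncountable. By the theory of tangent-to-identity germs (\cite{Ecalle}; see also \cite{CerveauMoussu}), a parabolic germ of $\mathrm{Diff}(\mathbb{C},0)$ with uncountable centralizer has trivial \'Ecalle--Voronin modulus, equivalently embeds in a holomorphic flow: $f=\exp X$ for some convergent $X\in\chi_2^1$, and if $f\in\mathrm{G}_k^1\smallsetminus\mathrm{G}_{k+1}^1$ then $X\in\chi_{k+1}^1\smallsetminus\chi_{k+2}^1$ with $k\geq1$.

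Next I would normalise $X$: by Proposition \ref{pro:tec2} there is $\psi\in\mathrm{G}_0^1$ with $\psi_*X=X_{k,\lambda}=\frac{x^{k+1}}{1+\lambda x^k}\frac{\partial}{\partial x}$ for some $\lambda\in\mathbb{C}$, and after replacing $\mathrm{A}$ by $\psi\mathrm{A}\psi^{-1}$ (a conjugacy preserving "uncountable maximal abelian" and "uncountable intersection with $\mathrm{G}_1^1$") we may assume $\exp X_{k,\lambda}\in\mathrm{A}$, so $\mathrm{A}\subseteq\mathrm{Cent}(\exp X_{k,\lambda},\mathrm{G}_0^1)$. Any element of $\mathrm{G}_0^1$ commuting with $\exp X_{k,\lambda}$ commutes with it as a formal series, so passing to Taylor expansions embeds $\mathrm{Cent}(\exp X_{k,\lambda},\mathrm{G}_0^1)$ into $\mathrm{Cent}(\exp X_{k,\lambda},\widehat{\mathrm{G}}_0^1)$, and the latter equals $\mathrm{A}_{k,\lambda}$ by the computation recalled just before Theorem \ref{thm:abmaxssgp}. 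Since $\mathrm{A}_{k,\lambda}$ is in fact a subgroup of $\mathrm{G}_0^1$ — its elements are the convergent flow maps $\exp tX_{k,\lambda}$ and the finitely many rotations $x\mapsto\xi x$, $\xi^k=1$ — we get $\mathrm{Cent}(\exp X_{k,\lambda},\mathrm{G}_0^1)=\mathrm{A}_{k,\lambda}$. Hence $\mathrm{A}\subseteq\mathrm{A}_{k,\lambda}$; as $\mathrm{A}_{k,\lambda}$ is abelian and $\mathrm{A}$ is maximal abelian, $\mathrm{A}=\mathrm{A}_{k,\lambda}$, which is the first assertion. (One can also stay in the holomorphic category throughout, using uniqueness of the holomorphic embedding flow: $h\exp X_{k,\lambda}h^{-1}=\exp X_{k,\lambda}$ then forces $h(\exp tX_{k,\lambda})h^{-1}=\exp tX_{k,\lambda}$ for all $t$, hence $h_*X_{k,\lambda}=X_{k,\lambda}$, and inspection of linear parts gives $h\in\mathrm{A}_{k,\lambda}$.)

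For the "moreover" I would show directly that $\mathrm{A}_0=\{\underline\mu\,\vert\,\mu\in\mathbb{C}^*\}$ is maximal abelian by computing the centralizer of a single well-chosen homothety. Pick $\kappa\in\mathbb{C}^*$ not a root of unity, so $\underline\kappa\in\mathrm{A}_0$. If $h=\sum_{\ell\geq1}a_\ell x^\ell\in\mathrm{G}_0^1$ commutes with $\underline\kappa$, comparing coefficients in $h(\kappa x)=\kappa h(x)$ gives $a_\ell(\kappa^\ell-\kappa)=0$ for every $\ell$, and $\kappa^\ell\neq\kappa$ for $\ell\neq1$ because $\kappa$ is not a root of unity; hence $h=a_1x\in\mathrm{A}_0$. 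Thus $\mathrm{Cent}(\underline\kappa,\mathrm{G}_0^1)=\mathrm{A}_0$, so any abelian subgroup of $\mathrm{G}_0^1$ containing $\mathrm{A}_0$ is contained in $\mathrm{A}_0$, i.e.\ $\mathrm{A}_0$ is maximal abelian. (Incidentally this shows that the alternative $\mathrm{A}=\mathrm{A}_0$ of the statement is not reached under the standing hypothesis, since $\mathrm{A}_0\cap\mathrm{G}_1^1=\{\mathrm{id}\}$; it is recorded because $\mathrm{A}_0$ is the one further uncountable maximal abelian subgroup one can pin down.)

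The main obstacle is concentrated in the first step: the implication "$\mathrm{Cent}(f)$ uncountable $\Rightarrow$ $f$ embeds in a holomorphic flow" is exactly the nontrivial analytic input from the \'Ecalle--Voronin theory of parabolic germs, and it is the reason Theorem \ref{thm:abmaxssgp} has no full analogue here; without the hypothesis that $\mathrm{A}\cap\mathrm{G}_1^1$ is uncountable one cannot rule out maximal abelian subgroups built out of non-flowable parabolic germs together with their (countable) centralizers. Everything after that step is routine: the normal form of Proposition \ref{pro:tec2}, the already-known identification of $\mathrm{Cent}(\exp X_{k,\lambda})$ with $\mathrm{A}_{k,\lambda}$, and the Koenigs-type coefficient comparison for $\mathrm{A}_0$.
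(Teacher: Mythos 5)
The paper offers no written proof of this theorem, merely attributing it to \cite{Ecalle} and \cite{CerveauMoussu}; your argument supplies exactly the intended details, with the \'Ecalle--Voronin dichotomy (a parabolic germ with uncountable centralizer embeds in a convergent flow) as the key analytic input, followed by the normal form of Proposition \ref{pro:tec2}, the identification of $\mathrm{Cent}(\exp X_{k,\lambda})$ with $\mathrm{A}_{k,\lambda}$, and the Koenigs-type computation for $\mathrm{A}_0$. It is correct and essentially the same approach the paper intends, and your closing observation that the hypothesis on $\mathrm{A}\cap\mathrm{G}_1^1$ in fact excludes the alternative $\mathrm{A}=\mathrm{A}_0$ is an accurate gloss on the statement.
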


\begin{rems}
\begin{enumerate}
\item The group  
$\mathrm{G}_1^1$ contains many countable 
abelian maximal subgroups;
in fact the group generated by a
"generic" element of $\mathrm{G}_1^1$ is maximal
(\cite{Ecalle}). 

\item There are diffeomorphisms 
$f=\lambda z+\text{h.o.t}$, $\lambda=\exp(2\mathbf{i}\pi\gamma)$,
$\gamma\in\mathbb{R}\smallsetminus\mathbb{Q}$, 
which are not holomorphically linearisable. 
The maximal abelian group that contains $f$
is not necessarily conjugate to 
$\mathrm{A}_0$ or $\mathrm{A}_{k,\lambda}$. It can be 
uncountable (see \cite{PerezMarco}).
\end{enumerate}
\end{rems}

Let $\sigma$ be an element of $\mathrm{Aut}(\mathrm{G}_0^1)$. 
We can, as in the formal case, characterize the 
groups~$\mathrm{A}_0$ and $\mathrm{A}_{k,\lambda}$
by their torsion elements. Recall that if $f$ belongs
to $\mathrm{G}_1^1$, then $f$ is a commutator, and
$\sigma(f)$ also. As a consequence, 
$\sigma(\mathrm{G}_1^1)=\mathrm{G}_1^1$, and if
$\mathrm{H}$ is a maximal abelian subgroup 
of~$\mathrm{G}_1^1$, then $\sigma(\mathrm{H})$ 
also. Hence, as in the formal context, 
$\sigma(\mathrm{A}_{k,0})$ is holomorphically 
conjugate to one of the $\mathrm{A}_{k,\lambda}$.
We want to prove that~$\sigma(\mathrm{A}_0)$ is 
conjugated to $\mathrm{A}_0$.

\begin{lem}\label{lem:per}
{\sl Let $f$ be an element of $\mathrm{G}_0^1$. Assume 
that $\mathrm{A}_{k,\lambda}$ is invariant by conjugation
by $f$, 
{\it i.e.} 
$f\mathrm{A}_{k,\lambda}f^{-1}=\mathrm{A}_{k,\lambda}$.
Then 
\begin{itemize}
\item[$\diamond$] if $\lambda\not=0$, then 
$f=\xi\exp\varepsilon\frac{x^{p+1}}{1+\lambda x^p}\frac{\partial}{\partial x}$ for some $\varepsilon$, $\xi$ in $\mathbb{C}$ such that $\xi^p=1$;

\item[$\diamond$] if $\lambda=0$, then 
$f=\mu\exp\varepsilon x^{p+1}\frac{\partial}{\partial x}$ 
for some $\mu$ in $\mathbb{C}^*$, and 
$\varepsilon$ in $\mathbb{C}$.
\end{itemize}}
\end{lem}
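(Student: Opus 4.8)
The plan is to turn the hypothesis ``$f$ normalises $\mathrm{A}_{k,\lambda}$'' into an identity between germs of vector fields, and then to conclude via the known description of the centraliser of $\exp X_{k,\lambda}$ in $\mathrm{G}_0^1$. Throughout, $X_{k,\lambda}=\frac{x^{k+1}}{1+\lambda x^k}\frac{\partial}{\partial x}$, $\underline{\nu}$ denotes the homothety $x\mapsto\nu x$, and $\mathrm{A}_{k,\lambda}=\{\exp tX_{k,\lambda}\,\vert\,t\in\mathbb{C}\}\times\{\underline{\xi}\,\vert\,\xi^k=1\}$, so that $\mathrm{A}_{k,\lambda}\cap\mathrm{G}_1^1=T_{k,\lambda}:=\{\exp tX_{k,\lambda}\,\vert\,t\in\mathbb{C}\}$ (an element $\underline{\xi}\exp tX_{k,\lambda}$ has linear part $\xi$, so it lies in $\mathrm{G}_1^1$ iff $\xi=1$). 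Since $\mathrm{G}_1^1$ is normal in $\mathrm{G}_0^1$, conjugation by $f$ preserves $T_{k,\lambda}$. Conjugating the flow $(\exp tX_{k,\lambda})_{t\in\mathbb{C}}$ by $f$ produces the flow $(\exp(t\,f_*X_{k,\lambda}))_{t\in\mathbb{C}}$ of the pushforward $f_*X_{k,\lambda}$; as this flow lies in $T_{k,\lambda}$, injectivity of $\exp$ (Proposition \ref{pro:tec1}) forces
\[
f_*X_{k,\lambda}=c\,X_{k,\lambda}\qquad\text{for some }c\in\mathbb{C}^*.
\]
Comparing the lowest-order terms on both sides gives the general remark that if $h\in\mathrm{G}_0^1$ satisfies $h_*X_{k,a}=c'\,X_{k,a}$ for some $a\in\mathbb{C}$, then $c'=h'(0)^{-k}$; in particular $c=f'(0)^{-k}$.

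Assume first $\lambda\neq0$. The residue $\mathrm{Res}_0\big(\frac{dx}{v(x)}\big)$ of a germ $v(x)\frac{\partial}{\partial x}$ of vector field is a holomorphic conjugacy invariant, and it equals $\lambda$ for $X_{k,\lambda}$ but $\lambda/c$ for $c\,X_{k,\lambda}$; since $f$ conjugates the former to the latter, $\lambda=\lambda/c$, whence $c=1$ and $f'(0)^k=1$. Then $f_*X_{k,\lambda}=X_{k,\lambda}$, i.e. $f$ commutes with every $\exp tX_{k,\lambda}$, so in particular $f\in\mathrm{Cent}(\exp X_{k,\lambda},\mathrm{G}_0^1)$. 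By the holomorphic centraliser computation (\cite{CerveauMoussu}; it also follows from its formal counterpart recalled in \S\ref{subsec:autform}, since $\mathrm{A}_{k,\lambda}\subset\mathrm{G}_0^1\subset\widehat{\mathrm{G}}_0^1$), this centraliser is exactly $\mathrm{A}_{k,\lambda}$. Hence $f=\underline{\xi}\exp\varepsilon X_{k,\lambda}$ with $\varepsilon\in\mathbb{C}$ and $\xi=f'(0)$, $\xi^k=1$, which is the first alternative.

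Assume now $\lambda=0$, and set $\mu=f'(0)$, $g=\underline{\mu}^{-1}\circ f\in\mathrm{G}_1^1$, so that $f=\underline{\mu}\circ g$. From $(\underline{\mu}^{-1})_*X_{k,0}=\mu^{k}X_{k,0}$ and $f_*X_{k,0}=cX_{k,0}$ we obtain $g_*X_{k,0}=(\underline{\mu}^{-1})_*(cX_{k,0})=c\mu^{k}X_{k,0}$; applying the general remark to $g$, whose linear part is $1$, forces $c\mu^{k}=1$, hence $g_*X_{k,0}=X_{k,0}$. As above this places $g$ in $\mathrm{Cent}(\exp X_{k,0},\mathrm{G}_0^1)\cap\mathrm{G}_1^1=T_{k,0}$, so $g=\exp\varepsilon x^{k+1}\frac{\partial}{\partial x}$ for some $\varepsilon\in\mathbb{C}$, and therefore $f=\mu\exp\varepsilon x^{k+1}\frac{\partial}{\partial x}$, which is the second alternative.

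The delicate point, and the only place where the two cases genuinely diverge, is the residue step: it pins down $c=1$ only when $\lambda\neq0$, which is exactly why the normaliser of $\mathrm{A}_{k,0}$ is strictly larger than $\mathrm{A}_{k,0}$ — the homotheties $\underline{\mu}$ rescale $X_{k,0}$, hence normalise $\mathrm{A}_{k,0}$ without centralising it. The other input not displayed verbatim in the preceding sections is the centraliser of $\exp X_{k,\lambda}$ in the \emph{holomorphic} group $\mathrm{G}_0^1$ (rather than in $\widehat{\mathrm{G}}_0^1$); I would quote it from \cite{CerveauMoussu}, or deduce it from the formal computation using that the elements of $\mathrm{A}_{k,\lambda}$ are convergent germs.
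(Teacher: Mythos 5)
Your proof is correct and complete; note that the paper itself gives no argument here (its proof reads ``Left to the reader''), so there is nothing to compare against --- your write-up simply supplies the missing details. The key steps (reducing to $f_*X_{k,\lambda}=c\,X_{k,\lambda}$ via normality of $\mathrm{G}_1^1$ and injectivity of $\exp$, pinning down $c$ by the residue of $dx/v$ when $\lambda\neq 0$ and by factoring out the linear part when $\lambda=0$, then invoking $\mathrm{Cent}(\exp X_{k,\lambda},\mathrm{G}_0^1)=\mathrm{A}_{k,\lambda}$) all check out, and you correctly flag the only external input, the holomorphic centraliser computation, which indeed follows from the formal one since every element of $\mathrm{A}_{k,\lambda}$ is a convergent germ. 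One cosmetic remark: the lemma's statement writes $p$ where it plainly means $k$ (the index of $\mathrm{A}_{k,\lambda}$), and your consistent use of $k$ is the right reading.
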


\begin{proof}[{\sl Proof}]
Left to the reader.
\end{proof}

Now, let us fixed $k$; by Theorem \ref{thm:maxabsub} we 
can assume, up to 
conjugacy, that $\sigma(\mathrm{A}_{k,0})=\mathrm{A}_{k,\lambda}$ for some $\lambda$. But $\sigma(\mathrm{A}_0)$
contains periodic elements of all periods so 
$\lambda=0$, and $\sigma(\mathrm{A}_0)$ 
is an abelian subgroup of the "affine" group 
$\big\{\mu\exp\varepsilon x^{p+1}\frac{\partial}{\partial x}\,\vert\,\mu\in\mathbb{C}^*,\,\varepsilon\in\mathbb{C}\big\}$ (Lemma \ref{lem:per}). Such a group is 
either conjugate to a subgroup of~$\mathrm{A}_0$, or conjugate to a subgroup of 
\[
\Big\{\xi\exp\varepsilon\frac{x^{p+1}}{1+\lambda x^p}\frac{\partial}{\partial x}\,\vert\,\varepsilon,\,\xi\in\mathbb{C},\,\xi^p=1\Big\}. 
\]
The fact that 
$\sigma(\mathrm{A}_0)$ contains an infinite 
number of periodic elements implies that 
$\sigma(\mathrm{A}_0)$ is a subgroup of~$\mathrm{A}_0$. As $\sigma(\mathrm{A}_0)$
is maximal, one gets: $\sigma(\mathrm{A}_0)=\mathrm{A}_0$. 

Recall that $\sigma(\mathrm{A}_0)=\mathrm{A}_0$ 
acts on all the $\sigma(\mathrm{A}_{\ell,0})$; 
an argument similar to that used in the formal case 
(\S\ref{subsec:autform})
implies that 
$\sigma(\mathrm{A}_{\ell,0})=\mathrm{A}_{\ell,0}$ 
for all $\ell\in\mathbb{N}$.

As before $\sigma\colon\mathrm{A}_0\to\mathrm{A}_0$ 
corresponds to an automorphism of the field 
$\mathbb{C}$.

\begin{lem}
{\sl Any $\sigma\in\mathrm{Aut}(\mathrm{G}_0^1)$ extends 
into an automorphism 
$\widehat{\sigma}\colon\widehat{\mathrm{G}}_0^1\to\widehat{\mathrm{G}}_0^1$. 
Moreover, $\sigma(\mathrm{G}_k^1)=\mathrm{G}_k^1$.}
\end{lem}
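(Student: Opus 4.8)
The plan is to prove the two assertions in sequence, using the structural information already obtained about $\sigma$, namely that after suitable conjugacy $\sigma$ fixes $\mathrm{A}_0$ pointwise up to a field automorphism of $\mathbb{C}$ and that $\sigma(\mathrm{A}_{\ell,0})=\mathrm{A}_{\ell,0}$ for all $\ell$. First I would reduce to the case where $\sigma$ acts as the identity on $\mathrm{A}_0$, absorbing the field automorphism into a modification of $\sigma$ exactly as in \S\ref{subsec:autform}; this is harmless since a field automorphism $\tau$ of $\mathbb{C}$ visibly induces automorphisms of both $\mathrm{G}_0^1$ and $\widehat{\mathrm{G}}_0^1$ compatible with the inclusion $\mathrm{G}_0^1\hookrightarrow\widehat{\mathrm{G}}_0^1$. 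Then I would show $\sigma(\mathrm{G}_k^1)=\mathrm{G}_k^1$ for every $k\geq 1$: given $h\in\mathrm{G}_k^1$, by the holomorphic Poincar\'e-type normal form (Proposition \ref{pro:tec2}) $h$ is holomorphically conjugate to $\exp X_{p,\lambda}$ for some $p\geq k$ and some $\lambda\in\mathbb{C}$, hence $h=\varphi\exp X_{p,\lambda}\varphi^{-1}$; since $\sigma(\mathrm{A}_{p,0})$ is conjugate to some $\mathrm{A}_{p,\lambda'}$, the element $\sigma(\exp X_{p,0})$, and more generally $\sigma(\exp X_{p,\lambda})$, lies in $\mathrm{G}_p^1\subseteq\mathrm{G}_k^1$, so $\sigma(h)=\sigma(\varphi)\,\sigma(\exp X_{p,\lambda})\,\sigma(\varphi)^{-1}\in\mathrm{G}_k^1$; applying the same argument to $\sigma^{-1}$ gives equality. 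This is the holomorphic analogue of Lemma \ref{lem:G01aut}, and the only subtlety is that here I must invoke the classification of uncountable maximal abelian subgroups (Theorem \ref{thm:maxabsub}) rather than Theorem \ref{thm:abmaxssgp}, but the conclusion $\sigma(\mathrm{A}_{p,0})$ conjugate to $\mathrm{A}_{p,\lambda'}$ has already been established above.

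Next I would construct the extension $\widehat{\sigma}$. The key point is that $\sigma$ is now continuous for the Krull (pro-finite-jet) topology: from $\sigma(\mathrm{G}_k^1)=\mathrm{G}_k^1$ for all $k$, $\sigma$ descends to automorphisms $\overline{\sigma}_k$ of the finite quotients $\newfaktor{\mathrm{G}_0^1}{\mathrm{G}_k^1}\simeq\newfaktor{\widehat{\mathrm{G}}_0^1}{\widehat{\mathrm{G}}_k^1}$, compatible with the projections $\newfaktor{\mathrm{G}_0^1}{\mathrm{G}_{k+1}^1}\to\newfaktor{\mathrm{G}_0^1}{\mathrm{G}_k^1}$. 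Since $\widehat{\mathrm{G}}_0^1=\varprojlim_k\newfaktor{\widehat{\mathrm{G}}_0^1}{\widehat{\mathrm{G}}_k^1}=\varprojlim_k\newfaktor{\mathrm{G}_0^1}{\mathrm{G}_k^1}$ is the inverse limit of these truncation quotients, the compatible family $(\overline{\sigma}_k)_k$ assembles into an automorphism $\widehat{\sigma}$ of $\widehat{\mathrm{G}}_0^1$; applying the same construction to $\sigma^{-1}$ produces a two-sided inverse, so $\widehat{\sigma}\in\mathrm{Aut}(\widehat{\mathrm{G}}_0^1)$. By construction $\widehat{\sigma}$ restricts to $\sigma$ on the image of $\mathrm{G}_0^1$, because $f\in\mathrm{G}_0^1$ is determined by its sequence of truncations $p_k(f)$ and $\widehat{\sigma}(f)=(\overline{\sigma}_k(p_k(f)))_k=(p_k(\sigma(f)))_k$, which is precisely $\sigma(f)$ viewed in $\widehat{\mathrm{G}}_0^1$. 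Finally, $\widehat{\sigma}(\widehat{\mathrm{G}}_k^1)=\widehat{\mathrm{G}}_k^1$ follows immediately since $\widehat{\mathrm{G}}_k^1=\ker\big(\widehat{\mathrm{G}}_0^1\to\newfaktor{\widehat{\mathrm{G}}_0^1}{\widehat{\mathrm{G}}_k^1}\big)$ and $\widehat{\sigma}$ covers $\overline{\sigma}_k$.

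The main obstacle I anticipate is the verification that $\overline{\sigma}_k$ is genuinely well defined on the finite quotient from $\sigma$ on $\mathrm{G}_0^1$ — i.e.\ that $\sigma$ carries $\mathrm{G}_k^1$ \emph{onto} $\mathrm{G}_k^1$ and not merely into it. The "into" direction is the normal-form argument sketched above; the "onto" direction requires applying that argument to $\sigma^{-1}$, which is legitimate because $\sigma^{-1}$ is again an automorphism of $\mathrm{G}_0^1$ satisfying the same hypotheses (it too fixes, up to conjugacy and a field automorphism, the subgroups $\mathrm{A}_0$ and $\mathrm{A}_{\ell,0}$), so no circularity arises. A secondary point worth stating carefully is that the inverse-limit description $\widehat{\mathrm{G}}_0^1=\varprojlim_k\newfaktor{\mathrm{G}_0^1}{\mathrm{G}_k^1}$ holds with the \emph{holomorphic} truncation quotients — this is true because $\newfaktor{\mathrm{G}_0^1}{\mathrm{G}_k^1}$ and $\newfaktor{\widehat{\mathrm{G}}_0^1}{\widehat{\mathrm{G}}_k^1}$ are canonically isomorphic, both being $\mathrm{Pol}_k^1$ with truncated composition, as recorded in \S\ref{sec:autG01}'s preliminaries — and once this identification is in place the assembly of $\widehat{\sigma}$ is formal. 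Everything else is bookkeeping, so I would keep the written proof short, referring back to Lemma \ref{lem:G01aut} and Theorem \ref{thm:maxabsub} for the pieces already in hand.
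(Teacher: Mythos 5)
Your overall architecture --- establish $\sigma(\mathrm{G}_k^1)=\mathrm{G}_k^1$, pass to the truncation quotients, assemble $\widehat{\sigma}$ as an inverse limit --- matches the paper's, and the inverse-limit assembly, the compatibility of the $\overline{\sigma}_k$'s, and the identification $\newfaktor{\mathrm{G}_0^1}{\mathrm{G}_k^1}\simeq\newfaktor{\widehat{\mathrm{G}}_0^1}{\widehat{\mathrm{G}}_k^1}$ are all fine. But the step on which everything rests is supported by a false claim. You assert that any $h\in\mathrm{G}_k^1$ is \emph{holomorphically} conjugate to $\exp X_{p,\lambda}$ by Proposition \ref{pro:tec2}. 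That proposition normalises convergent \emph{vector fields}, not diffeomorphisms, and the exponential map is far from surjective onto $\mathrm{G}_1^1$ in the convergent category: as the remark following Proposition \ref{pro:tec1} stresses, a germ $h\in\mathrm{G}_1^1$ is ``most of the time'' not the time-one flow of any convergent vector field (Ecalle--Voronin). The holomorphic conjugacy classes in $\mathrm{G}_k^1\smallsetminus\mathrm{G}_{k+1}^1$ carry nontrivial functional moduli and are not exhausted by the pairs $(p,\lambda)$. The conjugating map $\varphi$ with $h=\varphi\exp X_{p,\lambda}\varphi^{-1}$ therefore exists only in $\widehat{\mathrm{G}}_0^1$, where $\sigma$ is not yet defined, so you cannot apply $\sigma$ to it. This is precisely the formal-versus-holomorphic distinction that confines Lemma \ref{lem:G01aut} to $\widehat{\mathrm{G}}_0^1$ and that the paper flags when it says Theorem \ref{thm:abmaxssgp} has no holomorphic analogue.

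The paper's proof avoids normalising arbitrary elements of $\mathrm{G}_k^1$: it observes that the quotient $\newfaktor{\mathrm{G}_0^1}{\mathrm{G}_k^1}$ is generated by the projections of the homotheties and of the \emph{convergent} flows $\exp tx^{\ell+1}\frac{\partial}{\partial x}$, $\ell\leq k-1$ --- that is, by the images of the subgroups $\mathrm{A}_0$ and $\mathrm{A}_{\ell,0}$ which the preceding discussion has already shown $\sigma$ to preserve --- and builds the automorphisms $\sigma_k$ of the quotients from that data before assembling $\widehat{\sigma}$. If you wish to keep your order of argument, you need an independent reason why $\mathrm{G}_k^1$ is characteristic: the identity $\mathrm{G}_1^1=[\mathrm{G}_0^1,\mathrm{G}_0^1]$ handles $k=1$, and for higher $k$ one must work with commutators against the preserved subgroups $\mathrm{A}_0$ and $\mathrm{A}_{\ell,0}$ (or argue through the quotients as the paper does); the normal-form shortcut is simply not available in the holomorphic setting.
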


\begin{proof}[{\sl Proof}]
The projections of the homotheties and the 
$\exp x^\ell\frac{\partial}{\partial x}$'s, $\ell\leq k-1$,
generate 
$\newfaktor{\mathrm{G}_0^1}{\mathrm{G}_0^k}\simeq\newfaktor{\widehat{\mathrm{G}}_0^1}{\widehat{\mathrm{G}}_0^k}$.
Since~$\sigma$ preserves $\mathrm{A}_0$ and the 
$\mathrm{A}_{\ell,0}$'s, it  
induces an automorphism 
\[
\sigma_k\colon\newfaktor{\widehat{\mathrm{G}}_0^1}{\widehat{\mathrm{G}}_0^k}\to\newfaktor{\widehat{\mathrm{G}}_0^1}{\widehat{\mathrm{G}}_0^k}
\]
for any $k$. By construction these automorphisms
are compatible with the filtration induced by the 
$\mathrm{G}_0^k$'s
 \[
 \xymatrix{
    \newfaktor{\widehat{\mathrm{G}}_0^1}{\widehat{\mathrm{G}}_0^k} \ar[rr]^{\sigma_k} & & \newfaktor{\widehat{\mathrm{G}}_0^1}{\widehat{\mathrm{G}}_0^k} \\
    \newfaktor{\widehat{\mathrm{G}}_0^1}{\widehat{\mathrm{G}}_0^{k+\ell}}\ar[u] \ar[rr]_{\sigma_{k+\ell}} & & \newfaktor{\widehat{\mathrm{G}}_0^1}{\widehat{\mathrm{G}}_0^{k+\ell}}\ar[u]
  }
\]
As a result, the $\sigma_k$'s determine an automorphism
$\widehat{\sigma}\colon\widehat{\mathrm{G}}_0^1\to\widehat{\mathrm{G}}_0^1$.
This automorphism extends~$\sigma$ in the following sense: if
we fix a coordinate $z$, we get an embedding 
$\mathrm{G}_0^1\hookrightarrow\widehat{\mathrm{G}}_0^1$
that associates to the convergent element $f=\sum a_nz^n$ 
the element $f=\sum a_nz^n$ seen as a formal series. 
By construction $\widehat{\sigma}(f)=\sigma(f)$, {\it i.e.}
$\widehat{\sigma}$ sends a holomorphic diffeomorphism onto 
a holomorphic diffeomorphism.
\end{proof}

According to Theorem \ref{thm:out} we can assume that 
$\widehat{\sigma}$ is associated to an automorphism
$\tau$ of the field $\mathbb{C}$: if $f=\sum a_nz^n$, then 
\[
\widehat{\sigma}(f)=\displaystyle\sum_{n\geq 1} \tau(a_n)z^n.
\]
The automorphism $\tau$ satisfies the following property:
if $\displaystyle\sum_{n\geq 1}a_nz^n$ converges, then 
$\displaystyle\sum_{n\geq 1}\tau(a_n)z^n$ also converges.

\begin{lem}
{\sl Either $\tau$ is the identity $z\mapsto z$, or $\tau$ is 
the complex conjugation $z\mapsto\overline{z}$. }
\end{lem}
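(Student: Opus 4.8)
The plan is to reduce everything to the statement that $\tau$ is \emph{continuous} on $\mathbb{C}$, and then invoke the classical dichotomy. Indeed, a continuous field automorphism $\tau$ of $\mathbb{C}$ fixes $\mathbb{Q}$ pointwise, hence by density fixes $\mathbb{R}$ pointwise, and then $\tau(\mathbf{i})^2=\tau(-1)=-1$ forces $\tau(\mathbf{i})=\pm\mathbf{i}$; writing a general $z=x+\mathbf{i}y$ with $x,y\in\mathbb{R}$ gives $\tau(z)=x\pm\mathbf{i}y$, i.e.\ $\tau=\mathrm{id}$ or $\tau(z)=\overline{z}$. So the whole content is the continuity of $\tau$, which I will extract from the convergence-preserving property established just above (namely: if $\sum_{n\geq1}a_nz^n$ has positive radius of convergence then so does $\sum_{n\geq1}\tau(a_n)z^n$, since $\widehat{\sigma}$ maps $\mathrm{G}_0^1$ into $\mathrm{G}_0^1$).

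Since $\tau$ is additive, it is continuous on $\mathbb{C}$ as soon as it is continuous at $0$; so I will argue by contradiction, assuming there is a sequence $a_n\to0$ with $\tau(a_n)\not\to0$. Passing to a subsequence I may assume $|\tau(a_n)|\geq\varepsilon$ for some fixed $\varepsilon>0$ and all $n$, and, thinning further (which is harmless, $a_n\to0$), that $|a_n|\leq n^{-2n}$. The key move is to exploit that $\tau$ is the identity on $\mathbb{Q}$: for each $n$ choose a rational number $r_n$ with $\tfrac12\varepsilon\, n^{-n}\leq|r_n|\leq\varepsilon\, n^{-n}$ and set $b_n:=a_n/r_n$. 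Then $\tau(b_n)=\tau(a_n)/r_n$, so that $|b_n|\leq \tfrac{2}{\varepsilon}\,n^{-n}\to0$ while $|\tau(b_n)|\geq n^{n}$; in other words, starting from a null sequence I have produced a null sequence whose $\tau$-image grows faster than any geometric rate.

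Now consider the germ $f(z)=z+\sum_{n\geq2}b_nz^n$. Because $|b_n|^{1/n}\to0$, this series has infinite radius of convergence, $f(0)=0$ and $f'(0)=1$, so $f\in\mathrm{G}_0^1$. On the other hand $\widehat{\sigma}(f)(z)=z+\sum_{n\geq2}\tau(b_n)z^n$ lies in $\mathrm{G}_0^1$ by the previous lemma, yet $|\tau(b_n)|^{1/n}\geq n\to\infty$, so this power series has radius of convergence $0$ and is not a convergent germ — a contradiction. Hence no such sequence $(a_n)$ exists, $\tau$ is continuous, and the lemma follows. The only genuinely delicate point is the amplification step of the middle paragraph: one must use that $\tau$ fixes $\mathbb{Q}$ to turn "$|\tau(a_n)|$ bounded away from $0$" into "$|\tau(b_n)|$ super-geometric" while still forcing $b_n\to0$ quickly; once that is in place, the contradiction with the radius of convergence is routine.
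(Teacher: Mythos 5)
Your proof is correct, but it follows a genuinely different route from the paper's. The paper splits according to whether $\tau(\mathbb{R})=\mathbb{R}$: if so, $\tau$ is the identity or the conjugation by the classical argument; if not, it invokes Kestelman's theorem that the image of the unit disk under a discontinuous automorphism is dense in $\mathbb{C}$, and then picks coefficients $a_n\in\mathbb{D}(0,1)$ with $\vert\tau(a_n)-n!\vert<1$ to manufacture a convergent series with divergent image. You instead organize the dichotomy around continuity of $\tau$ (equivalent, since an automorphism preserving $\mathbb{R}$ is automatically continuous, and conversely), and in the discontinuous case you replace Kestelman's density theorem by an elementary amplification: starting from a null sequence $(a_n)$ with $\vert\tau(a_n)\vert\geq\varepsilon$, you divide by well-chosen rationals $r_n$ (which $\tau$ fixes) to get $b_n=a_n/r_n$ with $\vert b_n\vert^{1/n}\to 0$ but $\vert\tau(b_n)\vert^{1/n}\to\infty$, and then $z+\sum b_nz^n$ gives the same contradiction with the convergence-preserving property of $\widehat{\sigma}$. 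The trade-off is clear: the paper's argument is shorter but leans on a nontrivial external result, while yours is entirely self-contained at the cost of the bookkeeping in the subsequence extraction and the choice of the $r_n$; both constructions are instances of the same underlying mechanism (a convergent germ whose image under $\widehat{\sigma}$ has radius of convergence $0$). All the steps you use — that $\tau$ fixes $\mathbb{Q}$ pointwise, that additivity upgrades continuity at $0$ to global continuity, and that a continuous automorphism of $\mathbb{C}$ is the identity or the conjugation — are sound.
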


\begin{proof}[{\sl Proof}]
If $\tau$ preserves $\mathbb{R}$, that is if 
$\tau(\mathbb{R})=\mathbb{R}$, then $\tau$ is 
either the identity, or the complex conjugation.

If $\tau(\mathbb{R})\not=\mathbb{R}$, then the 
image of the unit disk $\mathbb{D}(0,1)$ 
is dense in $\mathbb{C}$ (\emph{see} \cite{Kestelman}).
Suppose that $\tau$ is neither the identity, 
nor the complex conjugation. By density there 
exists $a_n\in\mathbb{D}(0,1)$, $n\geq 2$, 
such that $\vert\tau(a_n)-n!\vert<1$. If 
$f=z+\sum a_nz^n$, then on the one hand $f$ belongs to 
$\mathrm{G}_0^1$, and on the other hand $z+\sum a_nz^n$ diverges:
contradiction.
\end{proof}

\bigskip

\section{Automorphisms groups of $\widehat{\mathrm{G}}_0^n$}\label{sec:autG0n}

We are not able to prove an analogue of Theorem
\ref{thm:out} in higher dimension, nevertheless we obtain the following 
partial result:

\begin{pro}\label{pro:res}
{\sl Up to conjugacy and up to the action of an element of 
$\mathrm{Aut}(\mathbb{C},+,\cdot)$, the restriction 
of an element of 
$\mathrm{Aut}(\widehat{\mathrm{G}}_0^n)$ to $\mathrm{GL}(\mathbb{C}^n)$ is the identity map.}
\end{pro}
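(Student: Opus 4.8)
The plan is to detect $\mathrm{GL}(\mathbb C^n)$ inside $\widehat{\mathrm G}_0^n$ by purely group-theoretic means, to show that after a conjugation $\sigma\in\mathrm{Aut}(\widehat{\mathrm G}_0^n)$ fixes it, and then to identify the resulting automorphism of $\mathrm{GL}(\mathbb C^n)$ by exploiting the conjugation action of $\widehat{\mathrm G}_0^n$ on itself. Two centraliser computations underpin everything. If $\underline\kappa\colon x\mapsto\kappa x$ is a homothety with $\kappa$ not a root of unity, then comparing Taylor coefficients in $f\circ\underline\kappa=\underline\kappa\circ f$ forces $f$ linear, so $\mathrm{Cent}_{\widehat{\mathrm G}_0^n}(\underline\kappa)=\mathrm{GL}(\mathbb C^n)$; likewise, writing $\mathrm T\cong(\mathbb C^*)^n$ for the diagonal torus and $\mathrm D=\mathrm{Tors}(\mathrm T)\cong(\mathbb Q/\mathbb Z)^n$ for its torsion subgroup, one gets $\mathrm{Cent}_{\widehat{\mathrm G}_0^n}(\mathrm D)=\mathrm T$.

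First I would bring $\mathrm T$ into position. The image $\sigma(\mathrm D)$ is again a torsion abelian subgroup isomorphic to $(\mathbb Q/\mathbb Z)^n$, and any such subgroup of $\widehat{\mathrm G}_0^n$ is conjugate to $\mathrm D$: a periodic subgroup of $\widehat{\mathrm G}_0^n$ is linearisable -- this follows from Theorem~\ref{thm:clas} applied to its finite subgroups together with a pro-algebraic limiting argument (in each truncation $\mathrm{Pol}_k^n=\mathrm{GL}(\mathbb C^n)\ltimes U_k$ the Zariski closure of the image is reductive in characteristic zero, hence conjugate into $\mathrm{GL}(\mathbb C^n)$ by an element of the unipotent part $U_k$, and one chooses these conjugators coherently in $k$) -- so $\sigma(\mathrm D)$ is conjugate into $\mathrm{GL}(\mathbb C^n)$, there it is simultaneously diagonalisable, hence conjugate into $\mathrm D$, and a subgroup of $(\mathbb Q/\mathbb Z)^n$ isomorphic to $(\mathbb Q/\mathbb Z)^n$ equals it. Conjugating $\sigma$ we may thus assume $\sigma(\mathrm D)=\mathrm D$, and then $\sigma(\mathrm T)=\sigma(\mathrm{Cent}(\mathrm D))=\mathrm{Cent}(\mathrm D)=\mathrm T$.

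Next I would analyse $\sigma$ on $\mathrm T$ via the unipotent one-parameter subgroups $\mathrm U_{I,j}=\{\exp(c\,x^I\partial_{x_j})\,\vert\,c\in\mathbb C\}\cong(\mathbb C,+)$ that $\mathrm T$ normalises, $\mathrm T$ acting on $\mathrm U_{I,j}$ through the character $\mu\mapsto\mu^{e_j-I}$; the $\mathrm U_{e_i,j}$ with $i\neq j$ are the elementary unipotents of $\mathrm{GL}(\mathbb C^n)$, the ones with $|I|\ge 2$ are ``higher'' and lie in $\widehat{\mathrm G}_1^n$. Since $\sigma$ preserves $\mathrm T$, it permutes this family while matching characters, and -- exactly as the action of $\mathrm A_0$ on the groups $\mathrm A_{k,0}$ forced the field-automorphism structure in the proof of Theorem~\ref{thm:out} -- this matching forces $\sigma_{\vert\mathrm T}$, on the character lattice, to be a permutation of coordinates (an element of the Weyl group of $\mathrm T$, realised by a permutation matrix in $\mathrm{GL}(\mathbb C^n)$) and forces a single $\tau\in\mathrm{Aut}(\mathbb C,+,\cdot)$ on the coefficients. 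The higher subgroups are essential here: the set of occurring characters is asymmetric -- $\mu\mapsto\mu_k^{-m}$ occurs for every $m\ge 1$ while $\mu\mapsto\mu_k^{m}$ never does -- which both rigidifies $\sigma_{\vert\mathrm T}$ as above and rules out a ``transpose-inverse'' component (whose effect on $\mathrm T$, $\mu\mapsto\mu^{-1}$, would invert all characters). Absorbing the permutation matrix into the conjugation and composing with $(\sigma^\tau)^{-1}$, we reduce to $\sigma$ fixing $\mathrm T$ pointwise and preserving each $\mathrm U_{e_i,j}$, on which it then acts $\mathbb C$-linearly, $c\mapsto c_{ij}c$.

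Finally, $\sigma$ now fixes the non-root-of-unity scalar $\underline\kappa\in\mathrm T$, hence preserves $\mathrm{GL}(\mathbb C^n)=\mathrm{Cent}_{\widehat{\mathrm G}_0^n}(\underline\kappa)=\langle\mathrm T,\ \mathrm U_{e_i,j}\rangle$, and the commutator (Steinberg) relations among the $\mathrm U_{e_i,j}$ and the torus force the family $(c_{ij})$ to be a coboundary, $c_{ij}=d_i/d_j$; thus $\sigma_{\vert\mathrm{GL}(\mathbb C^n)}$ is conjugation by $\mathrm{diag}(d_1,\dots,d_n)\in\mathrm{GL}(\mathbb C^n)\subset\widehat{\mathrm G}_0^n$. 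Absorbing this last inner conjugation gives $\sigma_{\vert\mathrm{GL}(\mathbb C^n)}=\mathrm{id}$, which is the statement. The main obstacle is the first step -- the linearisation of periodic subgroups of $\widehat{\mathrm G}_0^n$ with conjugators coherent along the filtration -- with the combinatorics of the third step a secondary one. An alternative to the last two steps is to invoke the classical description of the abstract automorphism group of $\mathrm{GL}(n,\mathbb C)$ and to eliminate its transpose-inverse and central-twist parts using that $\widehat{\mathrm G}_1^n/\widehat{\mathrm G}_2^n\cong\mathrm{Sym}^2(\mathbb C^n)^{*}\otimes\mathbb C^n$ is a $\mathrm{GL}(\mathbb C^n)$-module not isomorphic to its contragredient.
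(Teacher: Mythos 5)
Your proposal is essentially correct in outline but follows a genuinely different route from the paper. The paper's argument factors $\sigma$ through the linear-part map $\mathrm{j}^1$, i.e.\ studies $\varphi=\mathrm{j}^1\circ\sigma_{\vert\mathrm{GL}(\mathbb{C}^n)}$, and reduces everything to four facts (that $\ker\varphi$ lies in the homotheties, that $\varphi$ is injective, that $\varphi$ preserves non-periodic homotheties, and that $\sigma_{\vert\mathrm{GL}(\mathbb{C}^n)}$ is, up to conjugacy, an automorphism of $\mathrm{GL}(\mathbb{C}^n)$), implicitly leaving the identification of that automorphism to the classical Dieudonn\'e--Schreier--van der Waerden description of $\mathrm{Aut}(\mathrm{GL}(n,\mathbb{C}))$; the paper does not say how the transpose-inverse and radial components are excluded. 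You instead realise $\mathrm{GL}(\mathbb{C}^n)$ and the diagonal torus $\mathrm{T}$ as centralisers (of a non-resonant homothety and of the torsion group $\mathrm{D}$ respectively), normalise $\sigma$ on $\mathrm{T}$ first, and then use the $\mathrm{T}$-weight decomposition of the unipotent part together with Steinberg relations. What your approach buys is precisely the missing elimination of the transpose-inverse and central-twist components, via the asymmetry of the occurring characters (equivalently, via your closing observation that $\widehat{\mathrm{G}}_1^n/\widehat{\mathrm{G}}_2^n$ is not self-contragredient as a $\mathrm{GL}(\mathbb{C}^n)$-module -- a clean central-character argument); what it costs is the linearisation of the infinite torsion group $(\mathbb{Q}/\mathbb{Z})^n$ with conjugators coherent along the filtration, which you rightly flag as the main obstacle (the paper only records the finite case, Theorem~\ref{thm:clas}, and the $1$-dimensional periodic case).

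Two points in your sketch need repair before it is a proof. First, the $\mathrm{T}$-weight spaces inside $\widehat{\mathrm{G}}_1^n$ are not one-dimensional: for instance $x_1^2\partial_{x_1}$ and $x_1x_2\partial_{x_2}$ carry the same character $\mu\mapsto\mu_1^{-1}$, so $\sigma$ need not permute the individual subgroups $\mathrm{U}_{I,j}$; it only permutes the full weight subgroups. This does not damage the character-set asymmetry argument (the set of occurring characters is still not stable under inversion, and the root characters $\mu_j/\mu_i$ are still distinguished among them), but the claim ``permutes this family while matching characters'' must be restated at the level of weight subgroups, and the subsequent reduction to the $\mathrm{U}_{e_i,j}$ rephrased accordingly. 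Second, the passage from ``$\sigma$ preserves $\mathrm{T}$ and matches characters'' to ``$\sigma_{\vert\mathrm{T}}$ is a coordinate permutation composed with a single field automorphism $\tau$'' is exactly the delicate point of the one-dimensional proof of Theorem~\ref{thm:out}, and in dimension $n$ it additionally requires showing that the induced self-map of the character lattice preserving the occurring character set is a permutation matrix; this is believable but is a nontrivial combinatorial lemma that you assert rather than prove. With those two steps supplied, your argument would in fact be more complete than the proof sketched in the paper.
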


Denote by 
\begin{align*}
& \mathrm{j}^1\colon\widehat{\mathrm{G}}_0^n\to\mathrm{GL}(\mathbb{C}^n), && f\mapsto Df_{(0)}
\end{align*}
the map that associates to $f$ its linear part at $0$. Let $\sigma$ be an element of 
$\mathrm{Aut}(\widehat{\mathrm{G}}_0^n)$, and 
set $\widetilde{\varphi}=\mathrm{j}^1\circ\sigma$, {\it i.e.}
\begin{eqnarray*}
\widetilde{\varphi}\,\,\,\colon\,\,\,\widehat{\mathrm{G}}_0^n &\stackrel{\sigma}{\longrightarrow}& \widehat{\mathrm{G}}_0^n\quad \stackrel{\mathrm{j}^1}{\longrightarrow}\quad \mathrm{GL}(\mathbb{C}^n)\\ 
f &\mapsto& \sigma(f) \,\,\,\,\,\mapsto\quad D(\sigma(f))_{(0)}
\end{eqnarray*}
and consider $\varphi=\widetilde{\varphi}_{\vert\mathrm{GL}(\mathbb{C}^n)}$ the 
restriction of $\widetilde{\varphi}$ to $\mathrm{GL}(\mathbb{C}^n)$:
\begin{align*}
&\varphi\colon\mathrm{GL}(\mathbb{C}^n)\to\mathrm{GL}(\mathbb{C}^n), && A\mapsto D(\sigma(A))_{(0)}.
\end{align*}

Denote by $\mathcal{H}$ the subgroup of homotheties 
$\mathbb{C}^*\mathrm{id}$ of $\mathrm{GL}(\mathbb{C}^n)$.

Proposition \ref{pro:res} follows from the following facts:
\begin{itemize}
\item[$\diamond$] The normal subgroup $\ker\varphi$ of $\mathrm{GL}(\mathbb{C}^n)$ is a 
subgroup of $\mathcal{H}$.

\item[$\diamond$] The map $\varphi$ is an injective morphism.

\item[$\diamond$] Consider a non-periodic homothety $\lambda\,\mathrm{id}$.
The image $\varphi(\lambda\,\mathrm{id})$ of $\lambda\,\mathrm{id}$ by 
$\varphi$ is an homothety.

\item[$\diamond$] If $\sigma$ belongs to 
$\mathrm{Aut}(\widehat{\mathrm{G}}_0^n)$, 
then up to conjugacy $\sigma_{\vert\mathrm{GL}(\mathbb{C}^n)}$ is an automorphism 
of~$\mathrm{GL}(\mathbb{C}^n)$.
\end{itemize}

\begin{rem}
Proposition \ref{pro:res} can be used as follows. Take an element 
$A$ in $\mathrm{GL}(n,\mathbb{C})$, typically 
$A_0=(\lambda x_1,x_2,x_3,\ldots,x_n)$, and consider the group
$\mathrm{Cent}(A,\widehat{\mathrm{G}}_0^n)=\big\{f\in\widehat{\mathrm{G}}_0^n\,\vert\, f\circ A=A\circ f\big\}$. In the case of the example of $A_0$, for some 
generic $\lambda$, we have
\[
\mathrm{Cent}(A_0,\widehat{\mathrm{G}}_0^n)=\big\{\big(a(x_2,x_3,\ldots,x_n)x_1,g(x_2,x_3,\ldots,x_n)\big)\,\vert\, a\in\mathbb{C}[[x_2,x_3,\ldots,x_n]]^*,\,g\in\widehat{\mathrm{G}}_0^{n-1}\big\}.
\]
If $\sigma\in\mathrm{Aut}(\widehat{\mathrm{G}}_0^n)$ is such that 
$\sigma_{\vert\mathrm{GL}(n,\mathbb{C})}=\mathrm{id}_{\mathrm{GL}(n,\mathbb{C})}$,
then 
$\sigma\big(\mathrm{Cent}(A,\widehat{\mathrm{G}}_0^n)\big)=\mathrm{Cent}(A,\widehat{\mathrm{G}}_0^n)$,
and we can expect for instance with $A$ of type $A_0$ to
use an induction on the dimension to prove that the automorphisms group of $\widehat{\mathrm{G}}_0^n$ is 
generated by inner automorphisms, and the automorphisms of the 
field $\mathbb{C}$.
\end{rem}

\section{The $\mathcal{C}^\infty$-case}

Let $\mathrm{Diff}^\infty(\mathbb{R}^n,0)$ be the group
of germs of $\mathcal{C}^\infty$-diffeomorphisms of 
$(\mathbb{R}^n,0)$, and let 
$\widehat{\mathrm{Diff}(\mathbb{R}^n,0)}$
be its completion. Consider the map 
$\mathcal{T}\colon \mathrm{Diff}^\infty(\mathbb{R}^n,0)\to\widehat{\mathrm{Diff}(\mathbb{R}^n,0)}$
that sends the diffeomorphism $f$ onto the 
infinite Taylor expansion 
$\mathcal{T}(f)=\displaystyle\sum_{n=0}^\infty\frac{f^{(n)}(0)}{n!}x^n$. Denote by $\mathrm{Diff}^{\infty}_{\infty}(\mathbb{R}^n,0)$ 
the kernel of $\mathcal{T}$ consisting of diffeomorphisms 
infinitely tangent to the identity.

\begin{pro}
Let $\mathrm{G}$ be a finitely generated subgroup of
$\mathrm{Diff}^\infty(\mathbb{R}^n,0)$. If the 
restriction of $\mathcal{T}$ to $\mathrm{G}$ is 
one-to-one, then $\mathrm{G}$ is residually finite.
\end{pro}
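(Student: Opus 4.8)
The plan is to reduce the statement to (the proof of) Theorem \ref{thm:fgG0n}, by passing to the formal completion. Since $\mathcal{T}_{\vert\mathrm{G}}$ is one-to-one, $\mathcal{T}$ identifies $\mathrm{G}$ with a finitely generated subgroup of $\widehat{\mathrm{Diff}(\mathbb{R}^n,0)}$; so it is enough to prove that every finitely generated subgroup of $\widehat{\mathrm{Diff}(\mathbb{R}^n,0)}$ is residually finite, which is the exact real analogue of Theorem \ref{thm:fgG0n}. The injectivity hypothesis is precisely what lets one discard the "flat'' part $\mathrm{Diff}^{\infty}_{\infty}(\mathbb{R}^n,0)=\ker\mathcal{T}$, which is far from being linear, and work instead in the formal group where the jet filtration provides linear quotients.

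First I would set up, for each integer $k\geq 1$, the quotient of $\widehat{\mathrm{Diff}(\mathbb{R}^n,0)}$ by the normal subgroup of formal diffeomorphisms tangent to the identity at order $k$. Exactly as over $\mathbb{C}$ (\emph{see} \S\ref{sec:PoincareSiegel} and the definition of $\mathrm{Pol}_k^n$), this quotient is identified with the group of polynomial maps $f\colon\mathbb{R}^n\to\mathbb{R}^n$ with $f(0)=0$, $\deg f\leq k$ and $Df_{(0)}\in\mathrm{GL}(n,\mathbb{R})$, with composition truncated at order $k+1$; and this group acts faithfully, still by truncated composition, on the finite-dimensional real vector space $\mathbb{R}[x_1,x_2,\ldots,x_n]_k$. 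Hence the $k$-jet quotient of $\widehat{\mathrm{Diff}(\mathbb{R}^n,0)}$ embeds into $\mathrm{GL}(\mathbb{R}[x_1,x_2,\ldots,x_n]_k)$; in particular it is a linear group over the field $\mathbb{R}$. Denote by $p_k$ the corresponding projection.

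Then, given $h\in\mathrm{G}\smallsetminus\{\mathrm{id}\}$, the injectivity of $\mathcal{T}_{\vert\mathrm{G}}$ gives $\mathcal{T}(h)\neq\mathrm{id}$ in $\widehat{\mathrm{Diff}(\mathbb{R}^n,0)}$, so there exists $k$ such that $p_k(\mathcal{T}(h))\neq\mathrm{id}$. The group $p_k(\mathcal{T}(\mathrm{G}))$ is a finitely generated subgroup of a linear group, hence residually finite by Malcev's theorem (Theorem \ref{thm:Malcev}); pick a morphism $\psi\colon p_k(\mathcal{T}(\mathrm{G}))\to\mathrm{F}$ onto a finite group $\mathrm{F}$ with $\psi\big(p_k(\mathcal{T}(h))\big)\neq\mathrm{id}_{\mathrm{F}}$. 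Then $\psi\circ p_k\circ\mathcal{T}_{\vert\mathrm{G}}\colon\mathrm{G}\to\mathrm{F}$ separates $h$ from the identity, and $\mathrm{G}$ is residually finite. There is no genuine obstacle here: the argument is identical to that of Theorem \ref{thm:fgG0n}, and the only point requiring (routine) verification is that the jet quotients behave over $\mathbb{R}$ exactly as over $\mathbb{C}$, i.e. that they are linear groups; everything else is formal.
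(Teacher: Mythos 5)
Your proof is correct and follows the same route as the paper: use the injectivity of $\mathcal{T}_{\vert\mathrm{G}}$ to identify $\mathrm{G}$ with a finitely generated subgroup of $\widehat{\mathrm{Diff}(\mathbb{R}^n,0)}$, then invoke the real analogue of Theorem \ref{thm:fgG0n} via the linear jet quotients and Malcev's theorem. The paper states this in one line; you have merely written out the (routine) verification that the argument of Theorem \ref{thm:fgG0n} carries over to $\mathbb{R}$.
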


\begin{proof}
The group $\mathrm{G}$ is isomorphic to a 
finitely generated subgroup of 
$\widehat{\mathrm{Diff}(\mathbb{R}^n,0)}$; 
hence $\mathrm{G}$ is residually finite.
\end{proof}

\begin{question}
Let $\mathrm{G}$ be a finitely generated subgroup
of $\mathrm{Diff}_\infty^\infty(\mathbb{R}^n,0)$;
is $\mathrm{G}$ residually finite ?
\end{question}

Let us mention a result attributed to Thurston and 
"reproved" by Reeb and Schweitzer (\cite{ReebSchweitzer}) which 
could be useful to answer the question in dimension $1$:

\begin{thm}[Thurston]
Let $\mathrm{G}$ be a finitely generated subgroup
of $\mathrm{Diff}_\infty^\infty(\mathbb{R}^n,0)$.

There exists a non-trivial morphism from $\mathrm{G}$
to $\mathbb{R}$.
\end{thm}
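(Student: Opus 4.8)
The statement is the Thurston stability theorem, and the plan is to run Thurston's argument, observing that the only property of $\mathrm{Diff}_\infty^\infty(\mathbb{R}^n,0)$ actually used is that every germ $f$ in it satisfies $Df_{(0)}=\mathrm{Id}$ (true a fortiori, since $f$ is infinitely tangent to the identity). We may assume $\mathrm{G}\neq\{\mathrm{id}\}$, otherwise there is nothing to prove. Fix a finite symmetric generating set $S=\{s_1,\ldots,s_k\}$ and pick representatives of the $s_i$ on a common neighborhood of $0$. Choosing $g_0\in\mathrm{G}\smallsetminus\{\mathrm{id}\}$ and, for each $j$, a point $x_j\neq 0$ with $\|x_j\|<1/j$ and $g_0(x_j)\neq x_j$, at least one generator must move $x_j$ (else $g_0$ would fix it), so
\[
\varepsilon_j:=\max_{1\le i\le k}\|s_i(x_j)-x_j\|>0 .
\]
The vectors $v_j=\big(\varepsilon_j^{-1}(s_1(x_j)-x_j),\ldots,\varepsilon_j^{-1}(s_k(x_j)-x_j)\big)\in(\mathbb{R}^n)^k$ then lie on the unit sphere for the max-norm, so after extracting a subsequence $v_j\to v=(v_1,\ldots,v_k)$ with $\max_i\|v_i\|=1$; in particular $v\neq 0$.

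The technical core is the uniform estimate: for each \emph{fixed} word $g=s_{i_1}\cdots s_{i_\ell}$ in $S$ and each $\eta>0$ there is $\rho>0$ such that for $\|x\|<\rho$
\[
\Big\|g(x)-x-\sum_{r=1}^{\ell}\big(s_{i_r}(x)-x\big)\Big\|\le\eta\,\ell^2 e^{\eta\ell}\max_i\|s_i(x)-x\| .
\]
To prove it I would put $y_0=x$, $y_r=s_{i_r}(y_{r-1})$ (so $y_\ell=g(x)$), invoke the bound $\|s_i(y)-s_i(z)-(y-z)\|\le\eta\|y-z\|$ valid for all $i$ on a small enough ball — this is exactly where $D(s_i)_{(0)}=\mathrm{Id}$ is used — deduce $y_r-y_{r-1}=(s_{i_r}(x)-x)+E_r$ with $\|E_r\|\le\eta\|y_{r-1}-x\|$, bound $\|y_{r-1}-x\|\le\ell\,e^{\eta\ell}\max_i\|s_i(x)-x\|$ by a Gronwall-type induction, and sum over $r$. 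The key point is that $\ell$ is fixed while $\eta\to 0$, so $\eta\,\ell^2 e^{\eta\ell}\to 0$.

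Feeding $x=x_j$ into this estimate and dividing by $\varepsilon_j$ shows that the limit $\phi(g):=\lim_{j}\varepsilon_j^{-1}(g(x_j)-x_j)$ exists, equals $\sum_{r}v_{i_r}$, and in particular does not depend on the word representing $g$. Concatenating words gives $\phi(gh)=\phi(g)+\phi(h)$, so $\phi\colon\mathrm{G}\to(\mathbb{R}^n,+)$ is a homomorphism; it is non-trivial since $\phi(s_i)=v_i$ and $\max_i\|v_i\|=1$. Finally, choosing a linear form $L\colon\mathbb{R}^n\to\mathbb{R}$ with $L(v_{i_0})\neq 0$ for some $i_0$ with $v_{i_0}\neq 0$, the composite $L\circ\phi\colon\mathrm{G}\to\mathbb{R}$ is the desired non-trivial morphism. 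The main obstacle is the bookkeeping in the displayed estimate — getting the $C^1$-smallness parameter $\eta$ to beat the word-length growth for each individual element of $\mathrm{G}$, which is what forces $\phi$ to be simultaneously well-defined and additive; a second, subtler point is the normalization by $\varepsilon_j$, without which one could not exclude $v=0$, i.e. $\phi\equiv 0$.
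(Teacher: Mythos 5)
The paper does not prove this statement at all: it is quoted as a known result of Thurston, ``reproved'' by Reeb and Schweitzer, with a citation and no argument. Your proposal therefore cannot match ``the paper's proof''; what it does is correctly reconstruct the classical Thurston stability argument, and I find no gap in it. The two pillars are exactly right: (i) the normalisation of the displacement vectors by $\varepsilon_j=\max_i\|s_i(x_j)-x_j\|$, which is legitimate because if every generator fixed $x_j$ then so would the nontrivial germ $g_0$, and which is what guarantees the limit $v$ is nonzero; (ii) the estimate $\|g(x)-x-\sum_r(s_{i_r}(x)-x)\|\le\eta\,\ell^2e^{\eta\ell}\max_i\|s_i(x)-x\|$, whose proof via $\|s_i(y)-s_i(z)-(y-z)\|\le\eta\|y-z\|$ (mean value inequality, using only $D(s_i)_{(0)}=\mathrm{Id}$, a much weaker hypothesis than infinite tangency) and the Gronwall-type recursion $a_r\le(1+\eta)a_{r-1}+M$ is correct, the only unstated bookkeeping being that one must check inductively that the intermediate points $y_r$ remain in the ball where the $C^1$ estimate holds, which follows from the same bound since $M\to 0$ as $x\to 0$. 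The fact that the limit $\phi(g)=\lim_j\varepsilon_j^{-1}(g(x_j)-x_j)$ is defined intrinsically from $g$ and not from a chosen word correctly disposes of well-definedness, and concatenation gives additivity, so $\phi$ is a homomorphism to $(\mathbb{R}^n,+)$ with $\max_i\|\phi(s_i)\|=1$; composing with a linear form finishes. Two cosmetic remarks: the theorem is of course vacuously false for $\mathrm{G}=\{\mathrm{id}\}$ (the trivial group admits no non-trivial morphism anywhere), so ``nothing to prove'' should read ``the statement implicitly assumes $\mathrm{G}$ non-trivial''; and since you prove the stronger $C^1$ statement, it would be worth saying explicitly that this yields the theorem for $\mathrm{Diff}^\infty_\infty(\mathbb{R}^n,0)$ as the special case the paper needs.
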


\vspace{2cm}

\bibliographystyle{alpha}
\bibliography{biblio}

\nocite{}

\end{document}